\documentclass[10pt]{amsart}
\usepackage{amscd}

\usepackage{amssymb}

\newcommand{\bT}{{\mathbb T}}

\newcommand{\bZ}{{\mathbb Z}}
\newcommand{\bR}{{\mathbb R}}

\newcommand{\bA}{{\mathbb A}}
\newcommand{\bC}{{\mathbb C}}
\newcommand{\bF}{{\mathbb F}}

\newcommand{\bG}{{\mathbb G}}

\newcommand{\II}{{I_{\infty}^2}}

\newcommand{\la}{{\langle}}
\newcommand{\ra}{{\rangle}}

\newtheorem{thm}{Theorem}[section]
\newtheorem{lemma}[thm]{Lemma}
\newtheorem{cor}[thm]{Corollary}
\newtheorem{prop}[thm]{Proposition}
\newtheorem{as}[thm]{Assumption}
\newtheorem{conj}[thm]{Conjecture}

\newcommand{\Om}{{\Omega^*}}

\DeclareMathOperator{\Res}{Res}

\setlength{\textwidth}{126mm}
\numberwithin{equation}{section}

\begin{document}

\title[Algebraic cobordism]{Algebraic cobordism 
and flag varieties}
 
\author{Nobuaki Yagita}

\address{ faculty of Education, 
Ibaraki University,
Mito, Ibaraki, Japan}
 
\email{ nobuaki.yagita.math@vc.ibaraki.ac.jp, }

\keywords{algebraic cobordism,  Chow rings,
flag varieties, invariant ideals}
\subjclass[2010]{ 55N20, 14C15, 20G10}

\begin{abstract}
Let $X$ be an algebraic variety over $k$ such that
$\bar X=X\otimes _k\bar k$ is cellular.
We study torsion elements in the Chow ring $CH^*(X)$
which correspond to $v_iy$ in the algebraic cobordism
$\Omega^*(\bar X)$  where  $0\not=y\in CH^*(\bar X)/p$
and $v_i$ is the generator of $BP^*$ with $|v_i|=-2(p^i-1).$
In particular, we try to compute $CH^*(X)$
from $\Omega^*(\bar X)$  when $X$ are 
twisted complete flag varieties.

\end{abstract}

\maketitle
    
\section{Introduction}

Let $X$ be a smooth algebraic variety over a field $k$
of $ch(k)=0$ such that $\bar X=X\otimes _k\bar k$ is
cellular.  For a fixed prime $p$,
let $CH^*(X)=CH^*(X)_{(p)}$ be the Chow ring
generated by algebraic cycles modulo rational equivalence,
localized at $p$.  
Let us write 
\[\Omega^*(X)=MGL^{2*,*}(X)\otimes _{MU^*}BP^* \]
the $BP^*$-version of the algebraic cobordism 
 defined by Voevodsky ([Vo1], [Le-Mo1,2], [Ya2,6])
with   the coefficient ring
$\Omega^*=BP^*\cong \bZ_{(p)}[v_1,v_2,...]$,
where $|v_n|=-2(p^n-1)$.
The relation between
these theories are given ([Vo1],[Le-Mo1,2])
\[ CH^*(X)\cong \Omega^*(X)\otimes _{\Omega^*}\bZ_{(p)}.\]
In this paper, we study 
$CH^*(X)$ by the restriction map $res_{\Omega}:\Omega^*(X)\to \Omega^*(\bar X)$.

 Let $(y_i)=\{y_1,...,y_s\}$ be a $\bZ_{(p)}$-base of $CH^*(\bar X)$ 
with the degree $|y_i|\le |y_{i+1}|$ so that 
\[CH^*(\bar X)\cong \oplus _i\bZ_{(p)}\{y_i\},\quad
\Omega^*(\bar X)\cong \oplus _i\Omega^*\{y_i\}.\]
(Here $A\{ y_i\}$ is the $A$-free module generated by $y_i$.)
  Define the filtration
$F^i=\Omega^*\{y_j|i\le j\}\subset \Omega^*(\bar X)$ and the associated graded algebra
\[gr^*\Omega(\bar X)=\oplus_i F^i/F^{i+1}\cong 
\Omega^*\otimes CH^*(\bar X).\]
Let us write 
\[ Res_{\Omega}(X)=gr^*(Im(res_{\Omega}))\]
\[=\oplus_i(Im(res_{\Omega})\cap F^i)/(Im(res_{\Omega})\cap F^{i+1})
\subset gr^*\Omega^*(\bar X).\]

 An ideal $J\subset BP^*
$ is called invariant if $r(J)\subset J$ for all
(Landweber-Novikov)
cohomology operations $r$ in $\Omega^*(X)$ theory. 
By using the Cartan formula for the Landweber-Novikov
operation, it is almost immediate ;
\begin{lemma}  Let $(y_i)=\{y_1,...,y_s\}$ be a $\bZ_{(p)}$-base of $CH^*(\bar X)$.
Then there are 
  invariant ideals $J(y_i)$ in $\Omega^*$
such that
\[ Res_{\Omega}(X)\cong \oplus_iJ(y_i)\{y_i\}\subset 
\Omega^*\otimes CH^*(\bar X).\]
\end{lemma}

In the above lemma, let us write  $J(y_i)=(a_{i_1},..., a_{i_s})\subset BP^*$.
Then $a_{i_j}y_i$ is an $\Omega^*$-module generator in $Res_{\Omega}(X)$. Hence there is
an element $c_{i_j}$ in 
$\Omega^*(X)\otimes _{\Omega^*}\bZ_{(p)}
\cong CH^*(X)$ such that $res_{\Omega}(c_{i_j})=a_{i_j}y_i $ in $gr\Omega^*(\bar X)$.

Let us write
\[
Q(X)=P_{\infty}(X)=\oplus_iC(y_i) \quad 
with  
 \quad C(y_i)=\bZ/p\{c_{i_1},...,c_{i_s}\}\]
where the degree is given 
$ |a_{i_j}y_i|=|c_{i_j}|.$

\begin{cor}  For a base $(y_i)$ of $CH^*(\bar X)
$ and the invariant ideals
 $J(y_i)=(a_{i_1},...,a_{i_s})$, 
there is a  map
$ p_{\infty}: Q(X)\to CH^*(X)/p.$
\end{cor}
Note that the above map $p_{\infty}$ need not be injective.
There is a case that  for some $a_{i_j}$ with $p_{\infty}(c_{i_j})=0$ but
$a_{i_j}y_i\not \in F^i$,  that is,
\[(*)\quad  a_{i_j}y_i\in I_{\infty}(Im(res_{\Omega})\cap
\Omega^* \{\oplus_k y_k| k<i\}.\]
However $p_{\infty}$ seems surjective 
\begin{conj}  We have 
$Im(p_{\infty})\cong CH^*(X)/p$.
\end{conj}

However, it seems difficult to compute $Q(X)$ directly.
Hence we consider more easy versions.
The prime invariant ideals are written as 
$\Omega^*$   or  $ I_{n}=(p,v_1,....,v_{n-1})$ for  $0\le n\le \infty.$
We consider modulo $(I_{\infty}^{n+1})$ version
\[ Res_{\Omega, n}=Res_{\Omega}(X)/(Res_{\Omega}(X)\cap I_{\infty}^{n+1}gr\Omega^*(\bar X))\]
\[ \qquad \subset ( gr^*\Omega^*(\bar
X))/I_{\infty}^{n+1}\cong \Omega^*/I_{\infty}^{n+1}\otimes 
CH^*(\bar X),\]
and let us write by $P_n(X)$ the corresponding $Q(X)$
so that there is the commutative diagram
\[ \begin{CD}
P_1(X) @>>> P_2(X) @>>> ... @>>> P_{\infty}(X)=Q(X)@. \\
@VVV  @VVV  @VVV @VVV @.\\
Im(p_1)@>{inj.}>> Im(p_2)@>{inj.}>> ... @>{inj.}>> Im(p^{\infty})
@>{inj.}>> CH^*(X)/p.
\end{CD} \]

We give an example.
\begin{lemma} Let $y\not =0 \in CH^*(\bar X)/p$
with $J(y)=I_{n}$.  Then we have
\[ C(y)\cong M_n(y)=
\bZ/p\{c_0(y),c_1(y), ... , c_{n-1}(y)\},
\]
with degree $ |c_{k}(y)|=-2p^k+2+|y|.$
\end{lemma}

If $v_iy\in Res_{\Omega}(X)$, then we  see
that $v_jy\in Res_{\Omega}(X)$ for all $j\le i$.
So define $deg_v(y)=n$ if $v_{n-1}y \in Res_{\Omega}(X)$ but
$v_ny\not \in Res_{\Omega}(X)$. (Let $deg_v(y)=0$ if $y\in Res_{\Omega}(X)$ and $deg_v(y)=-1$ if $py\not \in Res_{\Omega}(X)$, that means $M_0=\bZ/p$ and $M_{-1}=0$.)  Then we can write
\begin{cor}
Let $CH^*(\bar X)\cong \bZ_{(p)}\{y_1,...,y_s\}$ and 
$|y_i|\not =|y_j|$ for $i\not =j$.  Then 
\[P_1(X)\cong   \oplus_i M _{d_i}(y_i) 
\quad where \ d_i=deg_v(y_i).\]
\end{cor}

In general, to compute $deg_v(y)$
is not  a so easy problem.  However there are cases $deg_v(y)$ are known or computable. 

Given   a pure symbol $a$ in the mod $p$  Milnor $K$-theory
$ K_{n+1}^M(k)/p$, by Rost,  we can
construct the norm variety $V_a$ of $dim(V_a)=p^n-1$  (such that $\bar V_a$ is cellular).
Rost and Voevodsky  showed that
there is $y\in CH^{b_n}(\bar V_a)$ for
$b_n=(p^n-1)/(p-1)$  such that $y^{p-1}$ is the fundamental class of $\bar V_a$. 
Moreover,  there is an irreducible (Rost) motive  $R_a$ ( write it by $R_n$
simply)
 in the motive $M(V_a)$ of  $V_a$
such that
$\  CH^*(\bar R_a)\cong \bZ_{(p)}[y]/(y^p)$.
We see   $deg_v(y^i)=n$ for all $1\le i\le p-1$, and moreover
we know
([Ro1], [Vo1,4], [Me-Su], [Vi-Ya], \cite{YaB})
\begin{thm}
 We have $ CH^*(R_n)/p\cong P_1(R_n)$ and
\[ P_1(R_n)\cong
M_0\{1\}\oplus \oplus_{i=1}^{p-1}M_n(y^i)
\cong \bZ/p\{1\}\oplus  M_n(y)\{1,y,...,y^{p-2}\}.\]
\end{thm}

We consider the K-theory version of $P_i(X)$.
Let $k^*=\bZ_{(p)}[v_1]$.
Take $J(y)'=J(y)\otimes_{BP^*}k^*$ instead of $J(y)$
(so that  $I_n'=I_2=(p,v_1)$ for $n\ge 2$).   Let us write
the corresponding $P_i(X)$ by $P(1)_i(X)$. For example, when $n\ge 2$,
\[ P(1)_1(R_n)\cong \bZ/p\{1\}\oplus  M_2(y)\{1,y,...,y^{p-2}\}.\]
Recall that  $gr_{geo}^*(X)$ is the  graded ring
associated with the geometric filtration of the algebraic $K$-theory
$K^0(X)$ ([To3], \cite{YaG}).
\begin{lemma}
If $ K^0(X)\cong  K^0(\bar X),$ then
we have a surjective map
\[ p(1)_{\infty}:  P(1)_{\infty}(X)\to gr_{geo}^*(X)/p.\]
\end{lemma}

Another examples which we can compute 
$deg_v(y)$ are
the cases that $X=\bG/B_k$ twisted complete 
versal flag
varieties.  The degree $deg_v(y)$ are computed by using the Milnor operations $Q_i$.

Let $G$ be a simply connected compact Lie group
and $T$ its maximal torus.
By Borel ([Bo], [Mi-Tod]), its $mod(p)$ cohomology is written as 
\[ (*)\quad gr H^*(G;\bZ/p)\cong P(y)/p\otimes \Lambda(x_1,...,x_{\ell}),
\quad \ell=rank(G)\]
\[ \quad  with  \ \  P(y)=\bZ_{(p)}[y_1,...,y_k]/(y_1^{p^{r_1}},...,
y_k^{p^{r_k}})
\]
where the degree $|y_i|$ of $y_i$ is even and 
$|x_j|$ is odd. 
(Here  $\Lambda(a,...,b)$ is the $mod(p)$ exterior algebra generated by $a,...,b$.)

 Let  $BT$ be the classifying space of $T$.
We consider the fibering ([Tod], \cite{Mi-Ni})
$ G\stackrel{\pi}{\to}G/T\stackrel{i}{\to}BT$
and the induced spectral sequence 
\[ (**)\quad E_2^{*,*'}=H^*(BT;H^{*'}(G;\bZ/p)) \Longrightarrow H^*(G/T;\bZ/p).\] 
The cohomology  of $BT$ is given 
 by
$H^*(BT)\cong S(t)=\bZ[t_1,...,t_{\ell}]$ with $|t_i|=2$.
It is well
known that $y_i$ are permanent cycles (i.e., exist in $E_{\infty}^{0,*'}$) and 
that there is a regular sequence ([Tod], \cite{Mi-Ni})
$(\bar b_1,...,\bar b_{\ell})$ in $H^*(BT)/(p)$ such that $d_{|x_i|+1}(x_i)=\bar b_i$.  (These $\bar b_i$ are called the transgressive elements.)   Thus we get
\[ E_{\infty}^{*,*'}\cong grH^*(G/T;\bZ/p)\cong P(y)/p\otimes 
S(t)/(\bar b_1,...,\bar b_{\ell}).\]
 Moreover we know that $G/T$ is a manifold 
such that $H^*(G/T)$ is torsion free, and 
\[grH^*(G/T)\cong P(y)\otimes S(t)/(b)\quad for \  
S(t)/(b)=S(t)/(b_1,...,b_{\ell}) \]
where $ b_i=\bar b_i\ mod(p).$
Since $H^*(G/T)$ is torsion free,  the 
Atiyah-Hirzebruch 
spectral sequence (AHss) collapses.  Hence  we also know 
\[gr BP^*(G/T)\cong BP^*\otimes grH^*(G/T). \]

The $v_n$-module structure of $\Omega^*(X)$
relates the $Q_n$-module structure of
$H^*(X;\bZ/p)$ for the Milnor operation
\[ Q_n : H^{*}(X;\bZ/p)\to H^{*+2p^n-1}(X;\bZ/p).\]
\begin{lemma} 
In the spectral sequence $(**)$, 
let $d_{|x_i|+1}(x_i)=b_i\not =0$.
 Then we have the relation
in $ BP^*(G/T)/\II$ such that 
\[ b_i=py(0)+v_1y(2)+...+v_ny(n)+...\]
where $y(k)\in H^*(G/T;\bZ/p)$ with $\pi^*y(k)=Q_kx_j$
for $\pi: G\to G/T$.
\end{lemma}

Let $G_k$ be the split reductive algebraic  group corresponding to $G$, and $T_k$ be the split maximal
torus corresponding to $T$.  Let $B_k$ be the Borel subgroup
with $T_k\subset B_k$.   Note that $G_k/B_k$ is cellular, and
$CH^*(G_k/T_k)\cong CH^*(G_k/B_k)$.
Hence we have   
$ CH^*(G_k/B_k)\cong H^{2*}(G/T)$
and $CH^*(BB_k)\cong H^{2*}(BT).$

Let $\bG$ be a nontrivial $G_k$-torsor.
We will study some motive in  the twisted flag variety $\bF=\bG/B_k$.
Moreover let $\bG$ be $versal$ 
(For the definition and properties of versal, see
$\S 10$ below  or  [Ga-Me-Se], [To2], [Me-Ne-Za], [Ka1].)

The versal case  of the main result in Petrov-Semenov-Zainoulline \cite{Pe-Se-Za} is
given  as follows  ;
Let $\bG$ be a versal $G_k$-torsor, and   $\bF=\bG/B_k$. 
Then there is a $p$-localized  submotive $R(\bG)$ of the motive $M(\bF)$ of $\bF$ such that 
\[CH^*(\bF)\cong CH^*(R(\bG))\otimes S(t)/(b),
\quad CH^*(\bar R(\bG))\cong P(y).\]
Moreover, each element in 
$CH^*(R(\bG))$ is written as a sum
of products $b_i$, since the map $CH^*(BB_k)\to
CH^*(\bF)$ is surjective when $\bG$ is versal
([Ka1], [Me-Ne-Za]).

From the preceding lemma, we have
\begin{lemma}  Suppose $deg_v(y)>0$ for
$y\in P(y)/p$ and $d_{|x|+1}(x_j)\not =0$.  If  in $H^*(G;\bZ/p)$,
\[Q_n(x_j)=y  \quad  but \quad
Q_k(x_j)=0\ \ for \ 0\le k< n\]
then $deg_{v}(y)\ge n+1$.
\end{lemma}

In the last sections in this paper, by using the above lemma, 
we try to compute
\[  P(1)_{\infty}^*(R(\bG))\to gr_{geo}^*(R(\bG))/p \]
for $(G,p)=(Spin(13),E_7; p=2)$ and $(E_8;p=3)$.

{\bf Example.}  Let  $G=Spin(11)$ (see $\S 12$). Then 
\[grH^*(G;\bZ/2)\cong P(y)\otimes \Lambda(x_3,x_5,x_7,x_9,z_{15})\]
 with $P(y)=\bZ_{(2)}[y_6,y_{10}]/(y_6^2,y_{10}^2)$.
Here the suffix means its degree.  
We denote $c_i=b_{i-1}=d(x_{2i-1})$ for $2\le i\le 5$, and $e_8=b_5
=d(z_{15})$.
The $Q_i$-actions are well known e.g.,
$Q_0(x_3)=0$, $Q_1(x_3)=Q_0(x_5)=y_6$.
From the preceding lemma and Lemma 1.8, the restriction map $res_{\Omega}$ is given
\[ \ c_2\mapsto v_1y_6,\  c_3\mapsto 2y_6, \ 
 \ c_4\mapsto v_1y_{10}, \  c_5\mapsto 2y_{10}, 
\] \[
\ c_2c_4\mapsto v_1^2y_6y_{10},\  e_{8}\mapsto 2y_6y_{10}.\]
Hence we see $J(y_6)=J(y_{10})=(2,v_1)$. Moreover we can show $J(y_6y_{10})=(2,v_1^2)$.  Thus we can see
\[ Res_{\Omega}(R(\bG))\ \cong\  BP^*\{1\}\oplus (2,v_1)\{y_6,y_{10}\}
\oplus (2,v_1^2)\{y_6y_{10}\}\]
in $gr\Omega^*(\bar R(\bG))\cong \Omega^*\otimes P(y)$. Then $C(y_6)\cong \bZ/2\{c_3,c_2\}$ for example. 
Thus  we can see  
\[gr_{geo}^*(R(\bG))/2\cong
P(1)_2^*(R(\bG))\cong  
 C(1)\oplus C(y_{6})\oplus C(y_{10})\oplus 
C(y_6y_{10})\]
\[ \cong 
 \bZ/2\{1, c_2,c_3,c_4,c_5,c_2c_4, e_8\}.\]
Moreover, 
Karpenko ([Ka2]) proves $gr_{geo}^*(R(\bG))\cong 
CH^*(R(\bG)).$

This paper is organized as follows.
In $\S 2$ we recall the motivic  $BP$-theory
using the Atiyah-Hirzebruch spectral sequence.
In $\S 3$, we define $deg_v(y)$ and study the case $J(y)=I_{n}$.
We generalize it to general invariant ideals in $\S 4$.    In $\S 5$, we study the graded ring $gr_{geo}^*(X)$ from $K$-theory.
In $\S6,7$, the norm variety and the Dickson invariants are studied, considering relations to the operation $Q_i$.
In $\S 8$, we study $deg_v(y)$ when $X$ is an anisotropic  quadric.
In $\S 9$,   we recall
 the cohomology and $BP^*$-theory 
of  topological flag manifolds $X=G/T$.
In  $\S 10,$  we recall the results
by  Petrov, Semenov and
Zainoulline, for the motves  $R(\bG)$ of flag varieties $\bG/B_k$.  In $\S 11, \S 12$, we study $Res_{\Omega}(R(\bG))$ when $G=SO(odd)$ and $G=Spin(odd)$.
We study $gr_{geo}^*(R(\bG))/2$ for groups 
for $Spin(7),Spin(9), Spin(11)$ in $\S 13$.  The group
$Spin(13)$ is studied in $\S 14$.
The exceptional groups $E_7,E_8 ; p=2$ are studied 
in $\S 15, \S16$. The case $E_8, p=3$ is studied in $\S17$.

\section{algebraic $BP$ theories}

Recall that $MU^*(-)$ is the complex cobordism theory defined 
in the usual (topological) spaces and 
\[MU^*=MU^*(pt.)\cong \bZ[x_1,x_2,...]\quad |x_i|=-2i.\]
Here each $x_i$ is represented by a sum of hypersurfaces 
of $dim(x_i)=2i$ in some product of complex projective spaces ([Ha],[Ra]).
Let $MGL^{*,*'}(-)$ be the motivic cobordism theory defined by
Voevodsky [Vo1]. 
Let us write by $AMU$ the spectrum $MGL_{(p)}$ 
in the stable $\bA^1$-category representing this motivic cobordism theory (localized at $p$), i.e., 
\[ MGL^{*,*'}(-)_{(p)}=AMU^{*,*'}(-).\]
Here note that $AMU^{2*,*}(pt.)\cong MU_{(p)}^{2*}$.
It  is not  isomorphic to $AMU^{*,*'}(pt)$  in general,
 while $AMU^{*.*'}(X)$ is an $MU_{(p)}^*$-algebra.

Given a regular sequence $S_n=(s_1,...,s_n)$ with $s_i\in MU^*_{(p)}$,
we can inductively construct the $AMU$-module spectrum by
the cofibering of spectra ([Ya2,4], [Ra])
\[(2.1)\quad \bT^{-1/2|s_i|}\wedge AMU(S_{i-1}) \stackrel{\times 
s_i}{\longrightarrow}
AMU(S_{i-1})\to AMU(S_i)\]
where $\bT=\bA/(\bA-\{0\})$ is the Tate object.
For the realization map $t_{\bC}$ induced from $k\subset \bC$, it is also immediate that $t_{\bC}(AMU(S_n))\cong MU(S_n)$ with
$MU(S_n)^*=MU^*/(S_n).$

Recall that the Brown-Peterson cohomology theory
$BP^*(-)$ with the coefficient  $BP^*\cong \bZ_{(p)}[v_1,v_2...]$ by
 identifying $v_i=x_{p^i-1}$. (So $|v_i|=-2(p^i-1)$.)
We can construct spectra (in the stable $\bA^1$-homotopy category)
\[ABP=AMU(x_i|i\not =p^j-1)\]
such  that $t_{\bC}(ABP)\cong BP$.
 For $S=(v_{i_1},...,v_{i_n})$, let us write
\[ABP(S)=AMU(S\cup\{x_i|i\not =p^j-1\})\]
so that $t_{\bC}(ABP(S))=BP(S)$ with $BP(S)^*=BP^*/(S)$.

In particular, let
$AH\bZ=ABP(v_1,v_2,...)$
so that $AH\bZ^{2*,*}(pt.)\cong \bZ_{(p)}$. 
 In the $\bA^1$-stable homotopy category,  
  Hopkins-Morel  showed that
\[ AH\bZ\cong H_{\bZ}\quad , i.e., \quad AH\bZ^{*,*'}(X)\cong H^{*,*'}(X,\bZ_{(p)})\]  
the (usual) motivic cohomology.  Using this result, we can construct the motivic Atiyah-Hirzebruch spectral sequence. 
  \begin{thm} ([Ya2,6])   Let $Ah=ABP(S)$ for 
  $S=(v_{i_1},v_{i_2},...)$, and  
recall  
\[ h^{2*''}=BP^{2*''}/(S)\cong Ah^{2*'',*''}(pt).\]
Then there is AHss (the Atiyah-Hirzebruch
spectral sequence)
\[E(Ah)_2^{(*,*',2*'')}=H^{*,*'}(X;h^{2*''})\Longrightarrow Ah^{*+2*'',*'+*''}(X)\]
with the differential \quad 
$d_{2r+1}:E_{2r+1}^{(*,*',2*'')} \to 
E_{2r+1}^{(*+2r+1,*'-r,2*''-2r)}$.
\end{thm}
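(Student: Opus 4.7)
The plan is to realize $Ah = ABP(S)$ as the homotopy limit of a tower of $AMU$-module spectra whose associated graded pieces are $\bT$-suspensions of $AH\bZ$, and then to extract the spectral sequence from the standard exact couple of that tower after smashing with $X_+$.

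I would build the tower by iterating the cofibering $(2.1)$: list the generators to be killed, namely $\{x_i : i \ne p^j-1\}$ together with the elements of $S$, in some fixed order of increasing absolute value, and form the successive cofiberings
\[ \bT^{-|s_n|/2} \wedge Ah^{(n-1)} \stackrel{\cdot s_n}{\longrightarrow} Ah^{(n-1)} \to Ah^{(n)} . \]
This exhibits $Ah$ as $\lim_n Ah^{(n)}$ with a decreasing filtration whose successive fibers are $\bT$-suspensions of $AH\bZ$ indexed by the coefficient monomials that survive in $h^* = Ah^{2*,*}(pt.)$. By Hopkins--Morel, each $\bT^m \wedge AH\bZ$ represents motivic cohomology shifted by the Tate bidegree $(2m,m)$, so applying $Ah^{*,*'}(X \wedge -)$ to the tower produces an exact couple whose layer indexed by a coefficient monomial of cohomological degree $2*''$ (and weight $*''$) contributes $H^{*,*'}(X;\bZ_{(p)})$. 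Collecting monomials of common coefficient degree assembles the $E_2$-page into $H^{*,*'}(X;h^{2*''})$, and the abutment $Ah^{*+2*'',*'+*''}(X)$ is recovered by adding the coefficient Tate bidegree $(2*'',*'')$ back into the $Ah$-grading. Convergence follows from the usual truncation argument, using that smooth $X$ has bounded motivic cohomological dimension in each weight.

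The differentials are the $k$-invariants of the tower, i.e.\ the boundary maps of the cofiberings above. Filtering by the coefficient weight $*''$, each $d_{2r+1}$ decreases $*''$ by $r$; this simultaneously accounts for the $-r$ in the motivic weight coordinate of the target and for the $-2r$ in the coefficient-degree coordinate, via the relation that $h^{2m}$ carries Tate bidegree $(2m,m)$. The remaining $+2r+1$ in the motivic cohomological coordinate combines the $(2r,r)$ Tate bidegree of $\bT^r$ with the simplicial $S^1$-suspension that appears in the boundary map of a cofibering, matching the tri-graded shift claimed in the theorem.

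The principal obstacle is this tri-graded bookkeeping: one must simultaneously track the Tate contributions from each surviving coefficient monomial, the bidegree of the $k$-invariant at each stage, and the multiplicativity of the filtration so that the spectral sequence is compatible with the $MU^*_{(p)}$-module structure on $Ah$. A secondary check, that the $E_2$-page and differentials do not depend on the ordering used to build the tower, follows from the standard invariance of iterated cofibers over a regular sequence of central operators, combined with the fact that Hopkins--Morel identifies each layer intrinsically with a motivic Eilenberg--MacLane spectrum regardless of how it was produced.
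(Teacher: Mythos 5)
Your overall plan — realize $Ah$ by a tower whose layers are $\bT$-suspensions of $AH\bZ$, invoke Hopkins--Morel to identify those layers with motivic cohomology, and extract the spectral sequence from the exact couple of the tower — is the right outline and is the approach taken in the papers [Ya2, Ya6] that the theorem cites. But the specific tower you construct does not have the property you need, and this is a genuine gap.

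You propose to iterate $(2.1)$ by killing the elements of $\{x_i : i\not= p^j-1\}\cup S$ one at a time, obtaining $AMU = Ah^{(0)}\to Ah^{(1)}\to\cdots$ with colimit $Ah$. There are two problems. First, the elements you list are the ones killed to build $Ah = ABP(S)$ from $AMU$; they do not give a tower interpolating between $Ah$ and $AH\bZ$. To reach $AH\bZ$ from $Ah$ the relevant sequence is the complementary set $\{v_j : v_j\notin S\}$, since $AH\bZ = Ah(v_j : v_j\notin S)$. Second, and more seriously, even with the right elements the "kill one generator at a time'' tower does not have $AH\bZ$-layers: from the defining cofibering, the cofiber of $Ah^{(n-1)}\to Ah^{(n)}$ is $\Sigma\,\bT^{-|s_n|/2}\wedge Ah^{(n-1)}$ — a suspension of the \emph{previous} stage of the tower, not of $AH\bZ$. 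So the claim that the "successive fibers are $\bT$-suspensions of $AH\bZ$ indexed by the coefficient monomials of $h^*$'' does not hold for this tower, and the assembly of the $E_2$-page into $H^{*,*'}(X;h^{2*''})$ has no basis.

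What is actually needed is the filtration of $Ah$ by powers of the ideal $BP^{<0}\subset BP^*$ (equivalently, the slice filtration of $Ah$). It is for that filtration that the associated graded is a wedge of Tate suspensions of $AH\bZ$ indexed by monomials in the surviving $v_j$'s of the given total degree — this is precisely where Hopkins--Morel enters, identifying the associated graded spectrum with a motivic Eilenberg--MacLane object. The cofibering $(2.1)$ is still the essential input, but only as a tool for building this graded-by-monomial-degree filtration (e.g.\ as a multi-cube of multiplication maps by the various $v_j\notin S$), not as a linear chain of quotients. Your degree bookkeeping for the differentials and the remarks on multiplicativity and reordering are plausible once the correct filtration is in place, but as written the construction of the spectral sequence itself is not established.
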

Note that the cohomology $H^{m,n}(X,h^{2n'})$ here is the usual
motivic cohomology {with (constant) coefficients in} the abelian group $h^{2n'}$.   Here we recall some important properties
of the motivic cohomology. When $X$ is smooth, we know
\[ CH^*(X)\cong H^{2*,*}(X;\bZ_{(p)}),\quad
 H^{*,*'}(X;\bZ_{(p)})\cong 0\ for \  2*'<*.\] 

Hence  if $X$ is smooth, then
$E_r^{m,n,2n'}\cong 0$ for $m>2n.$
The convergence in AHss means that
there is the filtration by the third degree (by $v_i's$)
\[Ah^{*,*'}(X)=F_0^{*,*'} \supset F_1^{*,*'}\supset F_2^{*,*'}\supset
...\]
such that $F_i^{*,*'}/F_{i+1}^{*,*'}
\cong E_{\infty}^{*+2i,*'+i,-2i}.$

 Let $S\subset R=(v_{j_1},...)$.  Then
the  induced map $ABP(S)\to ABP(R)$ of spectra induces the 
$BP^*_{(p)}$-module map of AHss
:  $E(ABP(S))_r^{*,*',*''} \to E(ABP(R))_r^{*,*',*''}.$
In general, $ABP(S)^{*,*'}(X)\not \cong ABP^{*,*'}(X)/(S).$ 
However, from the above maps and dimensional reason, we see
for  a smooth $X$,
\[ (2.3)\quad ABP(S)^{2*,*}(X)\cong ABP^{2*,*}(X)/(S),\]
\[(2.4)\quad ABP(S)^{2*,*}(X)\otimes _{BP^*}\bZ_{(p)}\cong H^{2*,*}(X)\cong CH^*(X).\]

In this paper,  
a $connective $  $orientive $  theory $h^{2*}(X)$
means $ABP(S)^{2*,*}(X)$ as above.
We mainly consider 
the connective  oriented theory $ABP^{2*,*}(X)$.
We write it simply
\[ \Omega^{2*} (X)=ABP^{2*,*}(X)
\cong  MGL^{2*,*}(X)\otimes_{MU^*}BP^*.\]
Hence from (2.4), \ $\Omega^*(X)\otimes _{\Omega^*}\bZ_{(p)}\cong
CH^*(X)$ for a smooth $X$.

Recall the filtration $F_i^{*,*'}$ and the associated graded ring
\[(2.5)\quad gr\Omega^{2*}(X)\cong \oplus_{i}E_{\infty}^{2(*+i),*+i,-2i} \cong
\oplus_{i}E_{2}^{2(*+i),*+i,-2i}/Im(d) \]
\[\cong  \oplus_i(\Omega^{-2i}\otimes CH^{*+i}(X))/Im(d)\]
\[where \quad Im(d)=\cup  Im(d_{r}) \subset \Omega^{<0}\otimes CH^*(X)\cong E_2^{2*,*, <0}\]
 for the differential
in  the AHss converging to $ABP^{*,*'}(X)$. 
Note that $d_rE_r^{2*,*,0}=0$ and each element in 
$E_2^{2*,*,0}$ is permanent.

When $X$ is cellular, then $CH^*(X)=H^{2*,*}(X)$
is torsion free.
Since it is known the image of the differential of
the 
AHss is torsion.   Hence all
$d_r=0$ and $Im(d)=0$.  Thus the AHss collapses
\[gr\Omega^*(X)\cong \Omega^*\otimes 
CH^*(X)\quad for\ X:cellular.\]

Let $y_i$ generate  (as an $\Omega^*$-module) 
$\Omega^*( X)$  (hence $y_i\not =0\in CH^*(X)/p$).
Then from (2.5), we can write
\[ gr\Omega^*(X)\cong \oplus_i F^i/F^{i+1},  \quad for\ F^i=\sum_{|y_j|\ge i}\Omega^*(y_j).\]
Here,  we used the notation that 
$\Omega^*(y)$ means the $\Omega^*$-submodule generated by $y$ in $\Omega^*(X)$, 
(while $\Omega^*\{y\}$ means the $\Omega^*$-free module
generated by $y$.)

\section{Image $Res$ of the restriction}

Let us write $\bar X=X\otimes \bar k$ for the algebraically closure $\bar k$ of $k$.
Let  $res_{\Omega}: \Omega^*(X) \to \Omega^*(\bar X)$ be the restriction map.
Recall that $gr \Omega ^*(\bar X)\cong \Omega^*\otimes CH^*(\bar X)$ since $\bar X$ is cellular.

Let $\{y_1,...,y_s\}$ be a $\bZ_{(p)}$-base of $CH^*(\bar X)$ 
with the degree $|y_i|\le |y_{i+1}|$ so that 
\[CH^*(\bar X)\cong \oplus _i\bZ_{(p)}\{y_i\},\quad
\Omega^*(\bar X)\cong \oplus _i\Omega^*\{y_i\}.\]
(Here $A\{ y_i\}$ is an $A$-free module generated by $y_i$.)
  Define the filtration
$F^i=\Omega^*\{y_j|j\ge i\}\subset \Omega^*(\bar X)$ and the associated grade algebra
\[gr^*\Omega(\bar X)=\oplus_i F^i/F^{i+1}\cong 
\Omega^*\otimes CH^*(\bar X).\]
(This filtration $F^i$ is finer than that in the previous
section.)

Les us write the image of the restriction map
\[ Res_{\Omega}(X)=gr^*(Im(res_{\Omega}))\]
\[ =\oplus_{i}\ ( Im(res_{\Omega})\cap F^i)/
(Im(res_{\Omega})\cap F^{i+1}) \ \
\subset \ \  gr\Omega^*(\bar X).\]

Recall that $I_n=(p=v_0,v_1,...,v_{n-1})$ and 
$I_{\infty}=(v_0,v_1,...)$ are 
the prime invariant ideals (under Landweber-Novikov operations) of $BP^*=\Omega^*$.
\begin{lemma}
Let $y\in \Omega^*(\bar X)$ be an $\Omega^*$-module generator. 
If $v_ny\in Res_{\Omega}(X)$,  then $v_jy\in Res_{\Omega}(X)$ for all $0\le j\le n$,
 namely,
\[ I_{n+1}\{y\} \subset Res_{\Omega}(X)\subset  gr\Omega^*(\bar X).\]
\end{lemma}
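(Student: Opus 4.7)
The plan is to follow the template of the proof of Lemma 2.4, replacing the zero submodule by $Res_{\Omega}$ and exploiting naturality of the Landweber--Novikov operations with respect to restriction. Two structural facts do the work: $Res_{\Omega}$ is an $\Omega^*$-submodule of $gr\Omega^*(\bar X)$ (being the associated graded of the $\Omega^*$-module image of $i_{\bar k}^*$), and each $r_\alpha$ commutes with $i_{\bar k}^*$.

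Given $v_n y \in Res_{\Omega}$, first I would choose $\tilde z \in \Omega^*(X)$ with $i_{\bar k}^*(\tilde z) \equiv v_n y$ modulo higher filtration. For each $0 \le i \le n-1$, set $\alpha = p^i \Delta_{n-i}$ and apply $r_\alpha$ to $\tilde z$; by naturality, the restriction of $r_\alpha(\tilde z)$ equals $r_\alpha(i_{\bar k}^*(\tilde z))$. Expanding via the Cartan formula and using the key identity $r_{p^i \Delta_{n-i}}(v_n) \equiv v_i \pmod{I_i^2}$ (already invoked in Lemma 2.4) yields
\[ r_\alpha(v_n y) = v_i y + a_i y + (\text{terms in higher filtration}), \qquad a_i \in I_i^2, \]
where the cross terms $r_{\alpha'}(v_n) r_{\alpha''}(y)$ with $|\alpha''| > 0$ sit in strictly higher filtration for the same dimensional reason used in the proof of Lemma 2.4.

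To finish I would induct upward on $i$ from $0$ to $n-1$. The base case $i=0$ is immediate because $I_0 = 0$, so $r_{\Delta_n}(\tilde z)$ restricts to $v_0 y = p y$ modulo higher filtration. For the inductive step, once $v_j y \in Res_{\Omega}$ is known for all $j < i$, the error $a_i y$ is an $\Omega^*$-linear combination of products $v_{j_1}(v_{j_2} y)$ with $j_1, j_2 < i$ and hence lies in $Res_{\Omega}$ (using that it is an $\Omega^*$-submodule), so it can be subtracted off to give $v_i y \in Res_{\Omega}$. I expect no substantive obstacle beyond those handled in Lemma 2.4: naturality of $r_\alpha$ under $i_{\bar k}^*$ does essentially all the work once the right operation $\alpha$ is chosen, and the only mild subtlety is keeping track of which terms vanish in $gr\Omega^*(\bar X)$ versus surviving to witness $v_i y$.
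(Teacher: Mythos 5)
Your proof is correct and takes essentially the same route as the paper's: apply the Landweber--Novikov operation $r_{p^i\Delta_{n-i}}$ using $r_{p^i\Delta_{n-i}}(v_n)\equiv v_i\ \mathrm{mod}\ I_i^2$, invoke naturality of $r_\alpha$ under $i_{\bar k}^*$ and the Cartan formula to push cross terms into higher filtration, and induct upward on $i$ using that $Res_{\Omega}$ is an $\Omega^*$-submodule to absorb the $I_i^2$-error. The only slip is a reference: the lemma whose proof you are transposing is Lemma~2.3 in the paper's numbering, not ``Lemma~2.4.''
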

\begin{proof}
Recall the Quillen (Landweber-Novikov) operation $r_{\alpha}$ in
the $\Omega^*(-)$ theory ([Ha], [Ra], [Ya2,4,7]) 
so that
\[ r_{p^i\Delta_{n-i}}(v_n)=v_i\quad mod(I_{i}^2).\]
Let $\alpha=p^i\Delta_{n-i}$. 
 Then by the  Cartan formula,  we have
\[ r_{\alpha}(v_nx)=r_{\alpha}(v_n)\cdot x +\sum_{\alpha'+\alpha''=\alpha,\ |\alpha''|>0}r_{\alpha'}(v_n)r_{\alpha''}(x).\]
For $x\not =0\in F^i/F^{i+1}$, the operation
 acts as $r_{\alpha''}(x)\in F^{i+1}$.
Hence in $gr\Omega^*( X)$ we get 
$r_{\alpha}(v_nx)=(v_i+a_i)x$ with $a_i\in I_{i}^2$.
When $i=0,1$, it is immediate we can take $a_i=0$.
By induction on $i$, we see
 if  $v_nx\in Res_{\Omega}(X)\subset gr\Om(X)$, then  so is  $v_ix$ for
$0\le i\le n$.  
\end{proof}

{\bf Definition.}
For a smooth $X$ and an $\Om$-module generator
$y\in \Om(\bar X)$ with $y\not \in Res_{\Omega}(X)$, we say that $y$ has $v$-degree $n$
\[deg_v(y)=n \quad  if \ v_{n-1}y\in \Res_{\Omega}(X)\ but\ 
v_{n}y\not \in Res_{\Omega}(X). \] 

When $y\in Res_{\Omega}(X)$, let $deg_v(y)=0$
and when $py\not \in Res_{CH}$ let $deg_v(y)=-1$.
\begin{lemma}
If $M$ is an irreducible $\Omega^*$-module, then
$M$ is contained in one irreducible motive in $\Omega^*(X)$.
\end{lemma}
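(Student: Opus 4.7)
The plan is to reduce the statement to the Krull--Schmidt decomposition of the motive of $X$ in the appropriate motivic category with $\Omega^*$-coefficients. Concretely, I would start by decomposing $M(X) = \bigoplus_{j} N_j$ into indecomposable (i.e.\ irreducible) motives, realized by a finite system of pairwise orthogonal primitive idempotent correspondences $e_j \in \Omega^*(X\times X)$ with $\sum_j e_j = \Delta_X$. Acting on cohomology via the usual pull-push formalism, these furnish $\Omega^*$-linear endomorphisms of $\Omega^*(X)$ summing to the identity, and they cut out the $\Omega^*$-module splitting $\Omega^*(X) = \bigoplus_j \Omega^*(N_j)$, where $\Omega^*(N_j)$ is the image of $e_j$.

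Next, given the irreducible $\Omega^*$-module $M \subset \Omega^*(X)$, I would show that each restriction $e_j|_M$ sends $M$ back into $M$. This uses that the correspondences in $\Omega^*(X\times X)$ act $\Omega^*$-linearly and commute with Landweber--Novikov operations, so any submodule generated by $\Omega^*$-module generators (of the type produced in Lemma~1.1, where $M$ is built from $v_i$-translates of a single generator $y$) is preserved. Once $M$ is stable under every $e_j$, the $e_j|_M$ form a family of pairwise orthogonal $\Omega^*$-linear idempotent endomorphisms of $M$ whose sum is $\mathrm{id}_M$. By the irreducibility of $M$, each $e_j|_M$ must be $0$ or $\mathrm{id}_M$, and since the sum is the identity, exactly one index $j_0$ satisfies $e_{j_0}|_M = \mathrm{id}_M$. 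This immediately gives $M = e_{j_0}(M) \subset \Omega^*(N_{j_0})$, as claimed.

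The main obstacle I anticipate is the stability step: verifying that an abstractly given irreducible $\Omega^*$-submodule of $\Omega^*(X)$ is actually preserved by the motivic projectors $e_j$. If one only knows that $M$ is $\Omega^*$-stable but not correspondence-stable, then the argument via Schur-type orthogonality of the $e_j$ cannot be applied, and instead one has to rule out a ``diagonal'' embedding of $M$ into two summands $\Omega^*(N_j) \oplus \Omega^*(N_{j'})$. Here the intended use of the lemma (namely for the $M_n(y)$ of Lemma~1.1) guarantees this stability because such modules are generated by $y$ together with its Landweber--Novikov images $r_\alpha(y)$, and correspondences commute with these operations; thus the stability step is available in the cases where the lemma is actually invoked, and the Schur argument above completes the proof.
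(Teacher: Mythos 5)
Your starting point is the same as the paper's: motivic correspondences are built from pullbacks and Gysin pushforwards, hence act $\Omega^*$-linearly, so a motivic decomposition $M(X)\cong\bigoplus_j N_j$ gives an $\Omega^*$-module decomposition $\Omega^*(X)\cong\bigoplus_j\Omega^*(N_j)$. The paper stops there and treats the conclusion as immediate; you go further and try to make the implicit step explicit via a Schur-type argument with the motivic idempotents $e_j$. In doing so you correctly put your finger on the genuine issue that the paper's terse proof elides: an irreducible $\Omega^*$-submodule of a direct sum need not lie inside one summand, and could embed ``diagonally'' across several. Identifying that obstacle is a real improvement in clarity over what is written.

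However, the patch you propose for the stability step does not work. You argue that because the $e_j$ are $\Omega^*$-linear and commute with the Landweber--Novikov operations $r_\alpha$, a module generated by $y$ and its $v_i$- or $r_\alpha$-translates is preserved by $e_j$. But $\Omega^*$-linearity and commutation with $r_\alpha$ only tell you that $e_j(v_iy)=v_ie_j(y)$ and $e_j(r_\alpha y)=r_\alpha e_j(y)$; they say nothing about whether $e_j(y)$ itself lands in $M$. A correspondence can send $y$ anywhere in $\Omega^*(X)$, so $e_j(M)\subset M$ is simply not forced by these formal properties, and the Schur argument cannot get off the ground. The way the paper actually avoids the diagonal pathology in the one place the lemma is invoked (Lemma~3.3) is via the cellularity of $\bar X$: there $\Omega^*(\bar X)$ is $\Omega^*$-free and the motivic decomposition over $\bar k$ is into Tate summands $N_{i,j}$, each contributing a free rank-one $\Omega^*$-module, so the chosen generator $y$ already spans a single Tate summand and $\Omega^*\{y\}=\Omega^*(N_{s,t})$ literally --- no diagonal embedding is possible. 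That freeness input, rather than correspondence-stability of $M$, is the missing ingredient; the general irreducible-$\Omega^*$-module statement as literally written is not established by either your argument or the paper's.
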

\begin{proof}
Each map from $\Omega^*(X)$ to $\Omega^*(Y)$ of motives  is represented by a composition map
$f_*g^*$ of the induced map $g^*$ (of smooth varieties) and the Gysin map $f_*$.
The both maps are $\Omega^*$ maps.
Hence a decomposition of an $\Omega^*$-motive gives that of an $\Omega^*$-module. \end{proof}

\begin{lemma}    If $deg_vy_i=n>0$, then  there are $c_j\in CH^*(X)$ for $\ 0\le j\le n-1$ with $|c_j|=|y_i|-2(p^j-1)$ such that there is a map
\[ p_i: M_n(y)'=\bZ_{(p)}\{c_0\}\oplus \bZ/p\{c_1,....,c_{n-1}\} \to CH^*(X).\].
\end{lemma}
\begin{proof} We recall that
\[CH^*(X)\cong \Omega^*(X)\otimes_{\Omega^*} \bZ_{(p)}\cong
\Omega^*(X)/(\Omega^{<0}\Omega^*(X))\]
where $\Omega^{<0}=BP^{<0}=(v_1,...,v_n,...)$ is the ideal of $\Omega^*$
generated by negative degree elements.
The assumption $deg_v(y)=n$ implies $I_{n}\{y\}\subset 
\Omega^*\{y\}\cap Res_{\Omega}(X)$.  
Moreover since $v_jy\not \in Res_{\Omega}(X)$ for all $j\ge n$,
we see
\[ I_n\{y\}=\Omega^*\{y\}\cap Res_{\Omega}(X)
\mod ((BP^{<0})^2\Omega^*\{y\}).\]

Here we note
\[ I_n/((BP^{<0})\cdot  I_n+(BP^{<0})^2)\cong (p,...,v_{n-1})/((BP^{<0})\cdot
(p,...,v_{n-1}))\]
\[ \cong \bZ_{(p)}\{p\}\oplus \bZ/p\{v_1,...,v_{n-1}\}
\cong M_n(y)'.\]
Here $pv_i=0$ for $i\not =0$, since  $p\cdot v_i=v_i\cdot p \in BP^{<0}\cdot I_n$.

Note that 
\[CH^{\ge |y|}(X)\supset  (\Omega^*\{y\}\cap Res(X))/(I_{\infty}\Omega^*(X)\cap \Omega^*\{y\}),\]
\[M_n(y)'\cong (\Omega^*\{y\}\cap Res(X))/(I_{\infty}\Omega^*\{y\}).\]
Thus we have the map $M_n(y)' \to CH^*(X).$
\end{proof}
\begin{lemma}
Let $M$ be an irreducible motive in the motive 
$M(X)$ of smooth $X$.  Let  $|y_{i-1}|<|y_i| $ for $y_i\in CH^*(\bar M)$ (e.g., $ CH^{|y_i|}(\bar M)
\cong \bZ_{(p)}$). If $deg_v(y)=n>0$,
then $M_n(y)'$ is contained
in $CH^*(M)$ , i.e., the map $p_1$ is injective.
\end{lemma}
\begin{proof}
Let $J(y_i)=I_{n+1}$ and $v_{n}y_i\in Res(M)$.
Assume that $v_ny_i\in I_{\infty}\Omega^*(M)$
but not in $I_{\infty}\{y_{j}|j\ge i\}$.
Let us write 
\[ v_{n}y_i=pf_0+...+v_{n}f_{n}\quad in \ \Omega^*(\bar M),\]
where $f_k\in Im(res_{\Omega})\subset \Omega^*(\bar M)$ and
$f_k\not \in \Omega^*\{y_j|j\ge i\}$.  Hence 
\[ f_n=\sum _{k<i}v(k)y_k+y_i+\sum_{i< j} v(j)y_j\quad for \ v(k),v(j)\in \Omega^*.\]
Of course $f_n$ is homogeneous, and
$v(k)=0$ since $|v(k)|<0$.
Hence $y_k$ for $k<i$ dose not appear.
\end{proof}

Recall $M_n(y)=M_n(y)'/p$ is defined  for $n>0$.
Moreover, let $M_0(y)'=\bZ_{(p)}\{y\}$ and $M_{-1}(y)'=0$ (when
$py\not \in Res_{\Omega}(X)$).
\begin{cor}
Let $CH^*(\bar X)\cong \oplus_i\bZ_{(p)}\{y_i\}$
where $|y_i|\not =|y_j|$ for $i\not =j$.  Then $CH^*(X)/p$ has the submodule $P_1(X)$
such that
\[ P_1(X)=  \oplus_{i}M_{d_i}(y_i)\subset CH^*(X)/p \quad where \ d_i=deg_v(y_i).\]
\end{cor}
The injection of the above corollary is an isomorphic
when $X$ is the Rost motive defined by the norm variety.  But it is far from an isomorphism for
general cases.

Let us write
\[ P_1^N(X)=\oplus _{|y_i|\le N}M_{d_i}(y_i).\]
\begin{lemma} 
For $n\ge 0$, let $f:X\to Y$ be a map of smooth varieties such that
\[ f^*:CH^*(\bar Y)/p \to CH^*(\bar X)/p \ \text{
is injective for }\ *\le N,\]
and that $deg_v(f^*y)\ge 1$ 
for all  $0\not =y\in CH^*(\bar Y)/p$ such that 
$deg_v(y)\ge 1$ and $|y|\le N$.
Then $deg_v(y)\le deg_v(f^*(y))$ and
\[ f^* :  P_1^N(Y) \to P_1^{*}(X)\ \ 
\text{is injective}. \]
\end{lemma}
\begin{proof}
If $v_jy_i\in Res_{\Om}(Y)$, then
$ f^*(v_jy_i)=v_jf^*(y_i)\in Res_{\Om}(X).$
Hence $deg_v(y_i)\le deg_v(f^*(y_i)).$
For $ d\le d'$, it is immediate $M_{d}\subset M_{d'}$ by the definition.
\end{proof}
\begin{cor}
Let $CH^{*\le N}(\bar X)\cong \bZ_{(p)}\{x_1,...,x_s\}$
with $|x_i|\not =|x_j|$ for $i\not =j$.
Suppose the assumption of the preceding lemma.
Then the composition 
\[ P_1^{ N}(Y)\to P_1(X)\to CH^*(X)/p\]
injective for $*\le N$.
\end{cor}

\section{Another Invariant ideals}

In this section, we generalize arguments 
for the module $M_n(y)'/p=M_n(y)$ in the preceding section as stated in the introduction.

Recall that
$Res_{\Omega}(X)=gr^* (Im(res_{\Omega}))
\subset gr^*(\Omega^*(\bar X)).$
 An ideal $J\subset BP^*
$ is called invariant if $r(J)\subset J$ for all
(Landweber-Novikov)
cohomology operations $r$ in $\Omega^*(X)$ theory. 
By using the Cartan formula for the Landweber-Novikov
operation as the proof of Lemma 3.1, we see 
\begin{lemma}  Let $(y_i)$ be a $\bZ_{(p)}$-base of $CH^*(\bar X)$, i.e., $CH^*(\bar X)\cong \oplus _i\bZ_{(p)}\{y_i\}.$
Then there are 
  invariant ideals $J(y_i)$ in $\Omega^*$
such that
\[ Res_{\Omega}(X)\cong \oplus_iJ(y_i)\{y_i\}\subset 
\Omega^*\otimes CH^*(\bar X).\]
\end{lemma}
\begin{proof}
Let us write  $J(y_i)=(a_{i_1},..., a_{i_s})$ for $a_{i_j}\in \Omega^*$.
 Then using the Cartan formula as the proof of Lemma 3.1, we have 
\[r_{\alpha}(a_{ij}y_i)=r_{\alpha}(a_{ij})y_i+\sum_{|\alpha''|>0}r_{\alpha'}(a_{ij})r_{\alpha''}(y_i)\quad in \ F^{|y|}\subset
\Omega^*(\bar X).\]
Here for $y_i\in F^j$, we have 
 $r_{\alpha}(a_{ij})y_i$ in $F^{j}/F^{j+1}\subset
gr\Omega^*( \bar X).$
Hence if  
$a_{i_j}(y)\in Res_{\Omega}(X)$, then so is $r_{\alpha}(a_{ij})(y)$.  

Hence we see $J(y_i)$ is an invariant ideal
beecause  the Quillen operations $r_{\alpha}$ generate
the Landwber-Novikov operations, and all 
stable $BP^*$-module operations 
in $ABP^{*,*'}(X)$ theory [Ya2,4].
\end{proof}
 
In the above lemma, let us write  $J(y_i)=(a_{i_1},..., a_{i_s})$.
Then $a_{i_j}y_i$ is an $\Omega^*$-module generator in $Res_{\Omega}(X)$. Hence there is
an element $c_{i_j}$ in 
$\Omega^*(X)\otimes _{\Omega^*}\bZ_{(p)}
\cong CH^*(X)$ such that $res_{\Omega}(c_{i_j})=a_{i_j}y_i $ in $gr\Omega^*(\bar X)$.

Let us write
\[
Q(X)=P_{\infty}(X)=\oplus_iC(y_i) \quad 
with  
 \quad C(y_i)=\bZ/p\{c_{i_1},...,c_{i_s}\}\]
where the degree is given 
$ |a_{i_j}y_i|=|c_{i_j}|.$
That is 
\[ Q(X)=P_{\infty}(X)=Hom_{\Omega^*}(Res_{\Omega}(X),
\bZ/p), \quad \bZ/p=\Omega^*\otimes _{\Omega^*} \bZ/p\]
so that $C(J(y_i))=\bZ/p\{c_{i_1},...,c_{i_s}\}$ by
$c_{i_j}: a_{i_j}y_i \mapsto 1$.

\begin{cor}  For a base $(y_i)$ of $CH^*(\bar X)
$ and the invariant ideals
 $J(y_i)=(a_{i_1},...,a_{i_s})$, 
we have the   map
$ p_{\infty}: P_{\infty}(X)\to CH^*(X)/p.$
\end{cor}
Note that the above map $p_{\infty}$ need not be
injective, in general, because there is a possibility
such that for some $a_{i_j}$,
\[(4.1)\quad  a_{i_j}\in I_{\infty}(Im(res_{\Omega})\cap
\Omega \{\oplus_k y_k| k<i\}.\]
However $p_{\infty}$ seems surjective

\begin{conj}  W have 
$Im(q_{\infty})\cong CH^*(X)/p$.
\end{conj}

{\bf Example.}  When $J(y)=I_n=(p,...v_{n-1})$, we see 
\[C(y)=\bZ/p\{c_0,...,c_{n-1}\}\cong M_n(y)\quad with \ |c_i|=|v_iy|.\]

 Of course there are many other invariant ideals. For examples 
the ideal  $J=(p^2,pv_1,v_1^2)$ is invariant
but $J'=(p^2,v_1^2)$ is not for $p\not =2$.
In fact,
\[ r_{\Delta_1}(v_1^2)=2pv_1, \quad r_{2\Delta_1}(v_1^2)=p^2\]
where we used 
$r_{\Delta_1}(v_1)=p$ and   $r_{\alpha}(v_1)=0\ mod(I_{\infty}^2)$ for $\alpha\not =\Delta_1.$
(See [Ha] for detail examples of invariant ideals.)
If $J=J(y)$, then
\[ C(y)\cong \bZ/2\{c_0,c_1,c_2\}\quad with\
|c_0|=|y|, |c_1|=|v_1y|,|c_2|=|v_1^2y|.\]

However, it seems difficult to compute $Q(X)$ directly.
Hence we consider more easy version.
Recall that 
$ I_{n}=(p,v_1,....,v_{n-1})$.
We consider modulo $(I_{\infty}^{n+1})$ version
\[ Res_{\Omega, n}=Res_{\Omega}(X)/(Res_{\Omega}(X)\cap I_{\infty}^{n+1}gr\Omega^*(\bar X))\]
\[ \qquad \subset ( gr^*\Omega^*(\bar
X))/I_{\infty}^{n+1}\cong \Omega^*/I_{\infty}^{n+1}\otimes 
CH^*(\bar X),\]
and let us write by $P_n(X)$ the corresponding $Q(X)$ so that 
\[ P_{n}(X)=Hom_{\Omega^*}(Res_{\Omega,n}(X),
\bZ/p).\]

Hence there is the commutative diagram
\[ \begin{CD}
P_1(X) @>>> P_2(X) @>>> ... @>>> Q=P_{\infty}(X)@. \\
@VVV  @VVV  @VVV @VVV @.\\
Im(p_1)@>{inj.}>> Im(p_2)@>{inj.}>> ... @>{inj.}>> Im(p^{\infty})
@>{inj.}>> CH^*(X)/p.
\end{CD} \]

If  $J(y)= (p^2,pv_1,v_1^2)$, then $C(y)$ is contained
in $P_2(X)$. In particular, if $J(y)=(p^n)$, then
$C(y)=\bZ/p\{c_0\}$ is contained in $P_m(X)$
for $m\ge n$.

The following lemma is well known 
\begin{lemma} (Landweber [La])
Let $J$ be an invariant ideal.  There is $s\ge 0$
and $k\ge 2$  such that
\[J=(p,v_1,...,v_{s-1},v_s^k,...).\]
\end{lemma}
\begin{cor}
If $J$ is the invariant ideal in the above lemma.
Let us write $J_i=J/I_{\infty}^{i+1}$.Then
\[ J_1=...=J_{k-1}=I_s,\quad J_k=(p,...v_{s-1},v_s^k,...).\]
\end{cor}

\begin{lemma}
Let $M$ be an irreducible motive in the motive 
$M(X)$ of smooth $X$.  
Let  $|y_{i-1}|<|y_i|+|v_1|$, and let
 $J(y_i)=(p^2,pv_1,v_1^2)$ or $J(y_i)=(p,v_1^2)$. 
Then $p_2|_{C(y_i)}: C(y_i)\to CH^*(M)/2$
is injective
(that is, $C(y_i)$ is contained in $CH^*(M)/2$). 
\end{lemma}
\begin{proof}
We will prove the case $J(y_i)=(p,v_1^2)$, and the other cases are proved similarly.

Assume that $v_1^2y_i\in I_{\infty}\Omega^*(M)$
but not in $I_{\infty}\{y_{j}|i\le j\}$.
Let us write 
\[ v_{1}^2y_i=pf_0+v_{1}f_{1}\quad in \ \Omega^*(\bar M),\]
where $f_k\in Im(res_{\Omega})\subset \Omega^*(\bar M)$ and
$f_k\not \in \Omega^*\{y_j|i\le j\}$.  Hence 
\[ f_1=\sum _{k<i}v(k)y_k+v_1y_i+\sum_{i< j} v(j)y_j\quad for \ v(k),v(j)\in \Omega^*.\]
Of course $f_1$ is homogeneous, and
$v(k)=0$ since $|v(k)|<0$.
Hence $y_k$ for $k<i$ does not appear.
\end{proof}

\section{relation to $K$-theories}

Recall the (topological) Morava $K$-theory
\[K(n)^*(X)=BP(p,...,\hat v_n,...)^*(X)[v_n^{-1}]\]
is the generalized cohomology theory with the coefficient ring $K(n)^*=\bZ/p[v_n,v_n^{-1}]$. 
Note that when $n=1$, this Morava $K$-theory
is $essentially$ isomorphic to the usual $mod(p)$
$K$-theory [Ya5]. 
 Hence when we want to know $mod(p)$ $K$-theory $K^*(X;\bZ/p)$, we only need to know this Morava $K$-theory $K(1)^*(X)$.

For an algebraic space $X$, we consider
the integral algebraic Morava $K$-theory
$A\tilde K(n)^{*,*'}(X)$ and its connective 
theory $A\tilde k(n)^{*,*'}(X)$ with the coefficeint
rings
$ A\tilde k(n)^{2*,*}\cong\bZ_{(p)}[v_n],$
and $A\tilde K(n)^{2*,*}\cong \bZ_{(p)}[v_n.v_n^{-1}]$
so that \[A\tilde k(n)^{*,*'}(X)[v_n^{-1}]\cong
A\tilde K(n)^{*,*'}(X).\]

We consider the AHss given in $\S 2$
\[ (5.1)\quad E^{*,*',*''}_2(X)
\cong H^{*,*'}(X)\otimes \tilde K(n)^{*''}    
\Longrightarrow A\tilde K(n)^{*,*'}(X).\]
The associated graded ring is defined as the infinite term ([Ya5,6]) 
\[ gr(n)^*(X)=E_{\infty}^{2*,*,0}(X).\]

\begin{lemma} We have 
$CH^*(X)/I(n)\cong gr(n)^*(X),$
where $I(n)$ is the ideal generated by $v_n$-torsions
in
$E_{\infty}^{2*,*,0}$ for $A\tilde k(n)^{*,*'}(X)$.
\end{lemma}
\begin{proof}
Recall $CH^*(X)\cong H^{2*,*}(X)$ and 
$d_r(H^{2*,*}(X))=0$ for smooth $X$.
By the $A\tilde k(n)^{*,*'}(X)$ version of (5.1), we have
\[ gr A\tilde k(n)^{2*,*}(X)\cong
E_{\infty}^{2*,*,*'} \cong
 (\tilde k(n)^{2*,*}\otimes
CH^*(X))/Im(d).\]
Since $\tilde k(n)^{2*,*}\cong \bZ_{(p)}[v_n]$, 
an element in $Im(d)$ is written as $d_r(x)=v_n^sx'$ for $s\ge 1$ and  $x'\in CH^*(X)$.

In the AHss for $A\tilde k(n)^{*,*'}(X)$,
 the element 
$x'\in E_{\infty}^{2*,*,0}$ is a (higher) $v_n$-torsion element, indeed, $v_n^sx'=0$.
The AHss (5.1)  is the localization by $v_n$ of 
the AHss for $A\tilde k(n)^{*,*'}(X)$. So 
$x'=0\in E_{\infty}^{2*,*,0}$ in the AHss converging $A\tilde K^{*,*'}(X)$ (5.1).  
\end{proof}

Recall the definition of $Q(X)=P_{\infty}(X),P_i(X)$
in the introduction (or the preceding section). We consider their $\tilde k(n)$ version.
Let 
$\tilde k(n)^*=\tilde k(n)^{2*,*}\cong \bZ_{(p)}[v_n].$
Let us write
\[J'(y_i)=J(y_i)\otimes_{\Omega^*} \tilde k(n)^*\cong J(y_i)\otimes _{BP^*}\bZ_{(p)}[v_n] \]
(while it is not invariant ideal in general).
For example, $I_m'=(p,v_n)$ for $m>n$. 

When  $J'(y_i)=(a_{i_1}',...,a_{i_r}')\subset \tilde k(n)^*$, we take
$c_{i_1}',...,c_{i_r}'$ in $CH^*(X)/p$.  Then define
\[ P(n)_{\infty}(X) =\oplus_{i}\bZ/p\{c_{i_1}',...,c_{i_r}'\}.\]
Similarly we can define $P(n)_j(X)$ by considering
$J'(y_i)/I_{\infty}^{j+1}$ so that
there is the commutative diagram
\[ \begin{CD}
P(n)_1(X) @>>> P(n)_2(X) @>>> ... @>>> P(n)_{\infty}(X)@. \\
@VVV  @VVV  @VVV @VVV @.\\
Im(p(n)_1)@>{inj.}>> Im(p(n)_2)@>{inj.}>> ... @>{inj.}>> Im(p(n)_{\infty})
@>{inj.}>> gr(n)^*(X)/p.
\end{CD} \]

For ease of notations (and we do not topological arguments), let us write (throughout this section)
\[   \tilde K(n) ^*(X)=A\tilde K(n)^{2*,*}(X),\quad   h^*(X)=A\tilde k(n)^{2*,*}(X)
\quad  h^*= \bZ_{(p)}[v_n].\]

From (2.4), we see $h^*(X)\otimes_{h^*}\bZ_{(p)}
\cong CH^*(X).$
From Lemma 5.1, we see
\[gr(n)^*(X)/p   
\cong CH^*(X)/(p,I(n))\]
\[\cong (h^*(X)\otimes _{h^*}\bZ/p)/I(n)\cong h^*(X)/(I(n),I_{\infty}).\]
Hence, for  $a\in h^m(X)$, 
the following conditions are equivalent
\[a =0 \in gr(n)^m(X)/p \ \ 
\Longleftrightarrow  \ \ 
a\in (I_{\infty},I(n))h^*(X).\]

\begin{lemma}
Let $a\not =0\in \tilde K(n)^0(X)$.  Then there is
$s\ge 0$ and $a'\in h^*(X)$  such that 
\[a=v_1^sa' \quad  and \quad a'\not \in Im(I_{\infty}h^*(X)).\]
Hence $0\not = a'\in  gr(n)^{2s}(X).$
\end{lemma}
\begin{proof}
Suppose that  $a\not =0\in \tilde K(n)^0(X)$.
Since $\tilde K(n)^*(X)\cong h^*(X)[v_n^{-1}]$, there is
$s$ such that 
\[ a=v_n^{s}a'\quad for \ some \ a'\in h^{2s}(X).\]
Let us take the largest such $s$.
Then $a'$ is an $h^*/p=\bZ/p[v_n]$ module generator of $h^*(X)/p$
and $|a|=0$ but $|a'|\ge 0$.
So $s$ must be nonnegative.
\end{proof}

\begin{lemma}
Suppose $\tilde K(n)^0(X)\cong \tilde K(n)^0(\bar X).$  Then the map is surjective
\[p(n)_{\infty} : P(n)_{\infty}(X) \twoheadrightarrow gr(n)^*(X)/p.\] 
\end{lemma}  
\begin{proof}  Let
 $0\not =x\in gr(n)^*(X)\cong h^*(X)/(I_{\infty},I(n)).$ 
We consider the restriction $res_h: h^*(X)\to h^*(\bar X)$. 
Suppose that 
$res_h(x)\in I_{\infty}res(h^*(X)) $
(note $I(n)=0  \in h^*(\bar X)$, since it is $v_n$-free).
Then take $x'\in h^*(X)$ such that 
\[ res_h(x-v_n^sx') \in F^{k+1} \subset h^*(\bar X)
\quad where \ x'\in F^k.\]
Continuing this argument we see 
$ res(x-a)=0$  with  $a\in I_{\infty}h^*(X).$

Since $K^0(X)\cong K^0(\bar X)$, the restriction 
$res_{K}$ is isomorphic.  Hence  we see
$x=a$ in $h^*(X)/(I(n))$.
This means $x\in I_{\infty}h^*(X)$ and $x=0\in gr(n)^*(X)$, and this is a contradiction.
Thus we can take $x$ so that $res(x)$ is a generator 
of $Res_{h}(X)$.  This implies $x
\in P(n)_{\infty}(X)$ and from Lemma 5.2, we have this lemma.
\end{proof}

\section{Motivic cohomology of the norm variety}

Let $X$ be a smooth (quasi projective) variety.
Recall that  $H^{*,*'}(X;\bZ/p)$ is the $mod(p)$ motivic cohomology defined by Voevodsky and Suslin.
Rost and Voevodsky solved the Beilinson-Lichtenbaum
(Bloch-Kato) conjecture ([Vo2.4], [Su-Jo], [Ro2])
\[  H^{*,*'}(X;\bZ/p)\cong H_{et}^*(X;\mu_p^{\otimes *'})
\quad for \ *\le *'.\]
In this paper, we always assume that $k$ has
 a primitive $p$-th $root$ $of$
 $unity$ (so $\mu_p\cong \bZ/p$).
 Then  we have  the isomorphism
$H_{et}^m(X;\mu _p^{\otimes n})\cong H_{et}^m(X;\bZ/p).$ 
Let $\tau$ be a generator of  $H^{0,1}(Spec(k);\bZ/p)\cong \bZ/p$,
so that 
$ colim_{i}\tau^iH^{*,*'}(X;\bZ/p)\cong  H_{et}^*(X;\bZ/p).$

Let $\chi_X$ be the $\hat Cech$ complex, which is defined
as $(\chi_X)^n=X^{n+1}$ (see details [Vo1,2,4]).  Let
$\tilde \chi_X$ be defined as a cofiber
\[   \tilde \chi_X\to \chi_X \to Spec(k)\]
in the stable $\bA^1$-homotopy category.
Voevodsky defined the motivic cohomology $H^{*,*'}(\chi;\bZ/p)$
for all object $\chi$ in the stable $\bA^1$-homotopy category.
Morover, he showed that there exists the Milnor operation
in the motivic cohomology ([Vo1,3])
\[ Q_i: H^{*,*'}(\chi;\bZ/p)\to H^{*+2p^i-1,*'+p^i-1}(\chi;\bZ/p)\]
which is compatible with the usual Milnor operation
on $H^*(t_{\bC}(\chi);\bZ/p)$ for the realization map $t_{\bC}$.
(Here the topological operation is defined $Q_0=\beta$
Bockstein operation and $Q_{i+1}=Q_i{P^{p^i}}-P^{p^{i}}Q_i$.)
This operation $Q_i$ can be extended on $H^{*,*'}(M;\bZ/p)$
for a motive $M$ in $M(X)$ (Lemma 7.1 in \cite{YaB}).

For $0\not =x\in H^{*,*'}(X;\bZ/p)$ (or cohomology operation),
let us write $*=|x|, w(x)=2*'-*, d(x)=*-*'$ so that $0\le d(x)\le dim(X)$
and $w(x)\ge 0$ when $X$ is smooth.  We also note
\[ |\tau|=0,\  w(\tau)=2,\quad |Q_i|=2p^i-1,\  w(Q_i)=-1.\]

Let $f_X\in CH^{dim(X)}(\bar X)$ be the fundamental class
of $\bar X$.  Suppose $deg_v(f_X)=n$, i.e., $v_{n-1}f_X\in Res_{\Omega}(X)$.  It is known that
the ideal 
\[ I(X)=\pi_*(X)\subset BP^*(pt.)=BP^*\quad for\ \pi: X\to pt.\]
is an invariant ideal (e.g. Lemma 5.3 in \cite{YaB}).  Since
$\pi_*(f_X)=1$, we get $\pi_*(v_{n-1}f_X)=v_{n-1}$, and this means $I(X)\supset I_n$.  Thus we have
\begin{lemma} Let $f\in CH^*(X)$ be a fundamental class of $X$.  If  $deg_v(f)=n$, then $I_n\subset I(X)$.
\end{lemma}
Note that when $X=V $ is the norm variety 
so that $t_{\bC}(\bar V)=v_n$,
the ideal $I(V)=I_{n+1}$, infact, $\pi_*(1)=v_n$.
But $deg_v(f_V)=n$ e.g., $v_nf_V\not \in Res_{\Omega}(X)$.  (See Corollary 6.8 below.)

The following two lemmas are known.
\begin{lemma} (Lemma 6.4,6.5 in \cite{YaB})
If $I_{n}\subset I(X)$, then
$ABP^{*,*'}(\tilde \chi_X)$ is $I_n$-torsion,
and $H^{*,*'}(\tilde \chi_X;\bZ/p)$ is $\Lambda(Q_0,...,Q_{n-1})$-free.
\end{lemma}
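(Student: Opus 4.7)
The plan is to treat the two assertions separately, with the common geometric input being that $\chi_X$ becomes motivically contractible after base change to $X$.

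For the $I_n$-torsion claim, I start from the fact that the first-factor projection $p_1 \colon X \times \chi_X \to X$ is a motivic weak equivalence: $\chi_X$ is the \v{C}ech nerve of $X \to \mathrm{Spec}(k)$, and the diagonal $X \to X \times X = (\chi_X)_0$ provides both a section of $p_1$ and an extra degeneracy exhibiting $X \times \chi_X$ as a simplicial contraction onto $X$. Smashing the cofiber sequence $\tilde\chi_X \to \chi_X \to \mathrm{Spec}(k)$ with $X_+$ therefore produces $X \times \tilde\chi_X \simeq 0$ in the motivic stable category, so $ABP^{*,*'}(X \times \tilde\chi_X) = 0$. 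Because $I_n \subset I(X)$, for each $i < n$ there is $\alpha_i \in ABP^{*,*'}(X)$ with $\pi_*(\alpha_i) = v_i$, where $\pi \colon X \to \mathrm{Spec}(k)$. The projection formula for the smooth proper pushforward along $\pi \times \mathrm{id}$ then gives, for any $z \in ABP^{2*,*}(\tilde\chi_X)$,
\[
v_i \cdot z \;=\; \pi_*(\alpha_i) \cdot z \;=\; (\pi \times \mathrm{id})_*\bigl(p_1^*(\alpha_i) \cdot p_2^*(z)\bigr),
\]
and $p_2^*(z) \in ABP^{*,*'}(X \times \tilde\chi_X) = 0$ forces the right-hand side to vanish. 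Hence every $v_i$ with $i < n$ annihilates $ABP^{2*,*}(\tilde\chi_X)$, which is the $I_n$-torsion statement.

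For the $\Lambda(Q_0, \ldots, Q_{n-1})$-freeness of $H^{*,*'}(\tilde\chi_X; \bZ/p)$, I would process the tower of $ABP$-module spectra
\[
ABP \longrightarrow ABP/p \longrightarrow ABP/(p,v_1) \longrightarrow \cdots \longrightarrow ABP/I_n \longrightarrow H\bZ/p
\]
obtained from successive $v_i$-cofiber sequences $ABP/J \xrightarrow{v_i} ABP/J \to ABP/(J,v_i)$. Applying $[\tilde\chi_X,-]$ to each cofiber gives a long exact sequence; induction on $i$, using part (a) together with the observation that an extension of two $v_i$-torsion $BP^*$-modules is again $v_i$-torsion, shows that multiplication by $v_i$ acts as zero on $(ABP/(p,v_1,\ldots,v_{i-1}))^{*,*'}(\tilde\chi_X)$ for each $i<n$. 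Each long exact sequence therefore splits into a short exact sequence, whose connecting homomorphism is identified, via Voevodsky's differential $d_{2p^i-1}=Q_i$ in the motivic AHss, with the Milnor operation $Q_i$. Iterating for $i=0,1,\ldots,n-1$ realizes $(ABP/I_n)^{*,*'}(\tilde\chi_X)$ as a free $\Lambda(Q_0,\ldots,Q_{n-1})$-module over $ABP^{*,*'}(\tilde\chi_X)$, and the further quotient $ABP/I_n \to H\bZ/p$ (which kills $v_n, v_{n+1}, \ldots$) introduces no new $Q_i$-relations for $i<n$, so the freeness descends to $H^{*,*'}(\tilde\chi_X; \bZ/p)$.

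The main obstacle is in part (b): rigorously identifying the spectrum-level Bockstein connecting maps of the $v_i$-cofiber sequences with the Milnor operations $Q_i$ on motivic $\bZ/p$-cohomology. This is essentially Voevodsky's computation of the low differentials of the motivic AHss, which I would invoke as an input from [Vo1,3] rather than reprove. A secondary point in part (a) is justifying the projection formula for $\pi \times \mathrm{id}$ against the non-representable simplicial object $\tilde\chi_X$; this is routine once one works at the level of $ABP$-module spectra and smashes the defining cofiber sequence of $\tilde\chi_X$ with $X_+$.
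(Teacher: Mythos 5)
Note first that the paper does not prove this lemma; it cites it to [Ya] (Lemmas 6.4--6.5), so your attempt is the only argument visible here and must stand on its own.

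Your part (a) is correct. The simplicial contraction $X\times\chi_X\to X$ given by the extra degeneracy, hence $X\times\tilde\chi_X\simeq 0$, is exactly the right geometric input, and the projection formula for the smooth proper $\pi\times\mathrm{id}$ then kills $v_iz$ for every $i<n$. This works at the level of $ABP$-module spectra, and indeed it works verbatim for \emph{any} $ABP$-module spectrum, not only for $ABP$ itself --- a point that matters below.

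Part (b) has two real gaps. First, the observation that ``an extension of two $v_i$-torsion $BP^*$-modules is $v_i$-torsion'' does not yield what you want. What part (a) gives and what the split long exact sequences require is that $v_i$ \emph{annihilates} $ABP/J^{*,*'}(\tilde\chi_X)$, and annihilation by $v_i$ is not preserved under extensions ($\bZ/p^2$ is an extension of two $\bZ/p$'s). The correct repair is cheap: since each $ABP/J$ is an $ABP$-module spectrum, the projection-formula argument of part (a) applies directly to $ABP/J^{*,*'}(\tilde\chi_X)$ and gives annihilation without any extension lemma. Second, and more seriously, the final passage from $\Lambda(Q_0,\dots,Q_{n-1})$-freeness of $ABP/I_n^{*,*'}(\tilde\chi_X)$ to that of $H^{*,*'}(\tilde\chi_X;\bZ/p)$ is unjustified. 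The cofiber sequences killing $v_n,v_{n+1},\dots$ need not split (nothing forces $v_j$, $j\ge n$, to act trivially), and the resulting long exact sequences exhibit $H^{*,*'}(\tilde\chi_X;\bZ/p)$ as built from kernels and cokernels of $\Lambda(Q_0,\dots,Q_{n-1})$-module endomorphisms of a free module; over an exterior algebra such kernels and cokernels are not generally free (e.g.\ $\Lambda(Q)\xrightarrow{\cdot Q}\Lambda(Q)$ has non-free cokernel $\bF_p$). The standard route, and surely the one in [Ya6], avoids the tower entirely: for each $i<n$ apply part (a) to the $ABP$-module spectrum $Ak(i)$ (connective Morava $K$-theory), so $v_i$ annihilates $Ak(i)^{*,*'}(\tilde\chi_X)$; the cofiber sequence $\Sigma^{(2(p^i-1),p^i-1)}Ak(i)\xrightarrow{v_i}Ak(i)\to H\bZ/p$ then splits into short exact sequences in which both the reduction map and the boundary map identify $\ker Q_i$ and $\operatorname{im}Q_i$ with the image of $Ak(i)^{*,*'}(\tilde\chi_X)$, giving vanishing $Q_i$-Margolis homology in each bidegree; the Adams--Margolis criterion then gives $\Lambda(Q_0,\dots,Q_{n-1})$-freeness in one step, with no need to relate $ABP/I_n$ to $H\bZ/p$.
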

\begin{lemma} (Corollary 3.4 in [Ya])
Let $x\in CH^*(X)\cong E_{\infty}^{2*,*,0}$ and $v_sx=0$ in $E_{\infty}^{2*,*,*'}$ for the
AHss converging to $ABP^{*,*'}(X)$.  
Then there is $b\in H^{*,*'}(X;\bZ/p)$ with
$Q_s(b)=x$  and  is a relation in $ABP^{2*,*}(X)=\Omega^*(X)$ 
\[ v_sx+v_{s+1}x_{s+1}+...+v_kx_k+...=0\quad mod(I_{\infty}^2)\]
with $x_k=Q_k(b)$ in $H^{2*,*}(X;\bZ/p)$ for all $k\ge s$.
\end{lemma}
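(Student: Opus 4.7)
The plan is to identify the relevant Atiyah--Hirzebruch spectral sequence differential with the Milnor operation $Q_s$, extract a mod-$p$ class $b$ witnessing $Q_s(b)=x$, and then read off the required relation modulo $I_\infty^2$ directly from the structure of the AHss for $ABP$.

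\textbf{Step 1 (bidegree tracking).} Since $x$ is a permanent cycle of the AHss, the element $v_sx \in E_\infty^{2*,*,-2(p^s-1)}$ can die only by being a boundary. From the differential shape $d_{2r+1}\colon E_r^{*,*',2*''}\to E_r^{*+2r+1,*'-r,2*''-2r}$, the earliest source with trivial $BP$-coefficient (third coordinate $0$) requires $r = p^s-1$, so the first possibly relevant differential is $d_{2p^s-1}$. Its source sits in the motivic bidegree which, after reduction mod $p$, is precisely the bidegree where $Q_s$ applied lands in $CH^*(X)/p$.

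\textbf{Step 2 (produce $b$ with $Q_s(b)=x$).} Pass to the mod-$p$ connective Morava theory $Ak(s) := ABP/(p,\,v_i\mid i\neq s)$ with coefficient $\bZ/p[v_s]$ (the mod-$p$ analogue of $A\tilde k(s)$ in Proposition 2.4). Its AHss is supported in coefficient degrees that are multiples of $|v_s|$, so by a degree count the only first-page nontrivial differential is $d_{2p^s-1}$, and the motivic analogue of the classical $BP$ identification---worked out in [Ya2,6] using Voevodsky's $Q_i$ together with the cofiber-sequence construction (2.1)---says this differential acts as $b \mapsto v_s\otimes Q_s(b)$. Comparing AHss's via the spectrum map $ABP \to Ak(s)$, the boundary witnessing $v_sx = 0$ in the $Ak(s)$-AHss produces the desired class $b \in H^{*,*'}(X;\bZ/p)$ with $Q_s(b) = x$.

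\textbf{Step 3 (extract the relation).} Now run $b$ back through the full AHss for $ABP$ and decompose its total differential along the various $v_k$-directions. Modulo $I_\infty^2$ all products $v_iv_j$ and $pv_i$ vanish, so only single-$v_k$ pieces survive; each such piece equals $v_kQ_k(b)$ by applying the Step~2 identification separately with $Ak(k)$ in place of $Ak(s)$. Convergence of the AHss for $ABP$ turns the vanishing of this total boundary in $E_\infty$ into the asserted relation
\[ v_sx + v_{s+1}Q_{s+1}(b) + \cdots + v_kQ_k(b) + \cdots \;\equiv\; 0 \pmod{I_\infty^2} \]
in $ABP^{2*,*}(X)$, with $x_k := Q_k(b)$ for $k\ge s$.

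\textbf{Main obstacle.} The technical heart is Step~2: rigorously identifying the first motivic AHss differential for $Ak(s)$ with $v_s\otimes Q_s$ on mod-$p$ classes, and checking compatibility under the comparison map from $ABP$. Topologically this is the classical $BP$-differential computation, but motivically it requires the interaction between Voevodsky's $Q_i$-operations and the cofiber-sequence construction (2.1) of the $AMU$-module spectra, as developed in [Ya2,6]. Once that identification is in hand, Steps~1 and~3 are essentially bookkeeping with bidegrees and with the structure of $I_\infty^2 \subset BP^*$.
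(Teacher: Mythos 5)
The paper does not actually prove this statement; it is quoted verbatim from the reference [Ya] (where it is Corollary~3.4, in the $ABP$-paper [Ya6]), so there is no in-paper proof to compare yours against. Judged on its own, your sketch captures the standard methodology of that reference---detect the first Atiyah--Hirzebruch differential through the mod-$p$ connective Morava theory, and identify it with the Milnor operation $Q_s$---and Steps 1 and 2 are essentially correct once one checks that the comparison map $\rho\colon E_*(ABP)\to E_*(Ak(s))$ sends $v_sx$ to $v_s\bar x$ and that the only differential in the $Ak(s)$-AHss that can hit $v_s\bar x$ is $d_{2p^s-1}$ from coefficient degree $0$ (because $k(s)^*$ is concentrated in multiples of $|v_s|$).

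The real gap is in Step 3, and I do not think it is mere bookkeeping, contrary to your closing remark. The notion of ``decomposing the total differential of $b$ along the various $v_k$-directions'' is not well defined: once $d_{2p^s-1}(b)=v_sQ_s(b)\ne 0$, the class $b$ is gone from $E_{2p^s}$, so there are no further AHss differentials emanating from $b$ to decompose. The higher terms $v_kQ_k(b)$ for $k>s$ do not appear as later differentials on $b$; they arise as the successive corrections required to express, inside the filtered group $ABP^{2*,*}(X)$, which element of filtration $\ge p^s$ the class $v_s\tilde x$ actually is modulo $I_\infty^2$. Making this precise requires tracking a lift of $v_s\tilde x$ step-by-step through the filtration (a Massey-product or secondary-operation argument, compatible with the cofibre tower built from $(2.1)$), and at each step comparing with $A\tilde k(k)$ to recognize the $v_k$-coefficient as $Q_k(b)$. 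This is the technical heart of the result and is not supplied by Step~3 as written; also note that your comparison to $Ak(k)$ by itself only yields $v_kQ_k(b)\in F^{>}(A\tilde k(k))$, which is a consequence of the desired relation rather than a derivation of $x_k=Q_k(b)$. If you want a self-contained proof, spell out the filtration-lifting argument; otherwise cite [Ya6, Cor.~3.4], as the paper does.
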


We consider the following assumption
for an existence of the motive $M$ in $X$.
\begin{as}
 There is a motive $M$ in $M(X)$ such that
\[(1)\quad \text{there is}\ y\in CH^*(\bar M)\  with\ deg_v(y)\ge 1,\]
\[ (2)\quad H^{*,*'}(M;\bZ/p)\cong H^{*,*'}(\chi_X;\bZ/p)
\ \ for\ 0<*'<*\le |y|.\]
\quad (3) \quad  the cohomology $H^{*,*'}(\tilde \chi_X;\bZ/p)$ for $0<*'<*\le |y|$
is generated by only one element $a'$ as the
$ \quad K_M^*(k)\otimes \Lambda(Q_0,...,Q_{n-1})$-modules. 
\end{as}

\begin{thm}
Assume $deg_v(y)=n$ and $(1)-(3)$ as above.
Then  we have
\[   c_i(y)=Q_0...\hat Q_i... Q_{n-1}(a').\]
Moreover $CH^*(M)/p\cong C(y)\cong  M_n(y)$ for $0<*\le |y|$.
\end{thm}
\begin{proof}
Since $deg_v(y)=n$, there is $c_{n-1}(y)\in H^{2*,*}(M;\bZ/p)$
which correspond to $v_{n-1}y$.
Since $|Q_ic_{n-1}|\le |y|$ for $i\le n-2$, we see
$Q_i(c_{n-1})=0$ from (2) since $w(Q_ic_{n-1})=-1$.
Since $I(X)\supset I_n$, 
$H^{*,*'}(\tilde \chi_X;\bZ/p)$
is $\Lambda(Q_0,...,Q_{n-1})$-free from Lemma 4.5.
Therefore $c_{n-1}(y)$ is in the $Q_i$-image.
Hence there is $a''\in H^{*,*'}(\tilde \chi_X;\bZ/p)$
such that
\[ Q_0...Q_{n-2}(a'')=c_{n-1}(y).\]

Hence $a''=a'$ or $a''=Q_{n-1}a'$  from (3). 
(Note $w(c_{n-1})=0$ and $a''$ does not 
contains elements in $k_+^M(k)$ from (2)).

Suppose $a''=Q_{n-1}a'$.
Then $c_{n-1}(y)=Q_0...Q_{n-1}(a'),$
and so an element $x$ with $w(x)=0$ must be only $c_{n-1}(y)$.  Hence there can not exist $c_i(y)$ for $i<n-1$, and a contradiction.
Thus we see  $a''=a'$.

Since $c_{n-1}(y)\in Im(Q_0)$, it is a $p$-torsion in $CH^*(X)$,
and this means 
\[  pc_{n-1}(y)+v_1c_{n-2}'+...+v_{n-1}c_0'=0\quad in\  \Omega^*(X)/I_{\infty}^2\]
for some $c_i'\in \Omega^*(X)$.  In particular, we see that 
\[res_{\Omega}(v_{n-1}c_0')=-pv_{n-1}y
\in \Omega^*(\bar X)/(res_{\Omega}(I_{\infty}^2))\cong \Omega^*(\bar X)/I_{\infty}^3\] from
$res_{\Omega}(pc_{n-1}(y))=pv_{n-1}y$. (Note for $i>0$, $res_{\Omega}(c_i')=0$ $mod(I_{\infty}^2)$
by dimensional reason.)  So $res_{\Omega}(c_0')=-py$.
Thus we can take $c_{0}(y)=-c_0'$.  (Note that 
 $c_i(y)$ are only decided
in $\Omega^*(X)$ modulo $(Ker(res_{\Omega}),I_{\infty})$.)

Then from Lemma 6.3, there is $b\in H^{*,*'}(X;\bZ/p)$ such that
\[ Q_0(b)=c_{n-1}(y),\quad Q_{n-1}(b)=c_0(y).\]
From (3), this $b$ is determined uniquely,
that is
$b=Q_1...Q_{n-2}(a')$.  Hence we have
\[ c_0(y)=Q_{n-1}(b)=Q_{n-1}Q_{n-2}...Q_1(a').\]

We also get $c_i(y)=Q_0...\hat Q_i...Q_{n-1}(a')$ by the same arguments, using the following relation
\[ v_ic_{n-1}(y)=-v_{n-1}c_i(y) \quad in\ \Omega^*(X)/(I_{\infty}^2) \quad for\ 0<i<n-1.\] 
\end{proof}

Now we recall the norm variety.
Given   a pure symbol $a$ in the mod $p$  Milnor $K$-theory
$ K_{n+1}^M(k)/p$, by Rost,  we can
construct the norm variety $V_a$ such that 
\[ \pi_*([V_a])=v_n,\quad a|_{k(V_a)}=0\in K_{n+1}^M(k(V_a))/p\]
where $[V_a]=1\in \Omega^0(V_a)$,  $\pi:V_a\to pt.$ is the projection and $k(V_a)$ is the function field of $V_a$ over $k$.  
Note $I(V_a)=I_{n+1}$.
Rost and Voevodsky  showed that
there is $y\in CH^{b_n}(\bar V_a)$ for
$b_n=(p^{n}-1)/(p-1)$  such that $y^{p-1}$ is the fundamental class of $\bar V_a$,  that is
\[ \bZ_{(p)}\{y^{p-1}\}\cong CH^{p^n-1}(\bar V_a)\cong
CH_0(\bar V_a).\]  
Then $deg(V_a)=p$ and $y^{p-1}\not\in Res_{CH}$, but
$py^{p-1}\in Res_{CH}$.  Moreover the same facts hold for
$y^i$ with $1\le i\le p-1$.  Hence each $y^i$ satisfies
(1),(2) (and some modified (3)) 
 in Assumption 6.4, in particular $deg_v(y^i)>0$.

More strongly, Rost and Voevodsky show the following theorem.
\begin{thm} ([Ro1,2], [Vo1,4])
For a nonzero symbol $a\in K_{n+1}^M(k)/p$, 
let $M(V_a)$ be the motive of the norm variety $V_a$.
Then   there is an irreducible motive  $R_a$ (write
$R_{n}$ also) in $M(V_a)$ 
such that
$ CH^*(\bar R_a)\cong \bZ_{(p)}[y]/(y^p) $ and 
\[ CH^*(R_a)
\cong \bZ_{(p)}\oplus  M_n(y)'[y]/(y^{p-1}).\]
\end{thm}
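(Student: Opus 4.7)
The plan is to combine the Rost-Voevodsky construction of the Rost motive $R_a \subset M(V_a)$ with the $deg_v$-machinery developed in Sections 3--4. I will take as Rost-Voevodsky input the existence of the irreducible summand $R_a$ together with the formula $CH^*(\bar R_a) \cong \bZ_{(p)}[y]/(y^p)$ (with $y^{p-1}$ the fundamental class of $\bar V_a$), and derive the integral structure of $CH^*(R_a)$ via Corollary 3.6.

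The first step is to pin down $I(V_a)$. From $\pi_*([V_a]) = v_n$ we have $v_n \in I(V_a)$; invariance under Landweber-Novikov operations and the formula $r_{p^j \Delta_{n-j}}(v_n) \equiv v_j \pmod{I_j^2}$ let me extract $p, v_1, \ldots, v_{n-1}$ successively by induction on $j$, so that $I(V_a) \supset I_{n+1}$. Lemma 4.2 then makes $H^{*,*'}(\tilde\chi_{V_a};\bZ/p)$ free over $\Lambda(Q_0,\ldots,Q_n)$, which transfers to $R_a$ via condition (2) of Assumption 4.1.

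The second step is to establish $deg_v(y^i) = n$ for each $1 \le i \le p-1$. For the lower bound, the pure symbol $a$ lifts to a product $x = x_0 \cdots x_n$ of one-dimensional classes in $H^{n+1,n+1}(\chi_{V_a};\bZ/p)$; since $a$ vanishes over $k(V_a)$, the restriction of $x$ to $R_a$ has the form $\tau^s x'$ required by condition $(**)$ before Lemma 4.5. Applying Lemma 4.5, together with analogous symbol-product constructions for the higher powers, produces explicit lifts $c_0(y^i), \ldots, c_{n-1}(y^i) \in CH^*(R_a)$ of $v_j y^i$, so that $deg_v(y^i) \ge n$. The reverse inequality $v_n y^i \notin Res_\Omega$ is a Rost-Voevodsky input reflecting the non-splitting of $V_a$ over fields of index coprime to $p$.

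The final step applies Corollary 3.6 to obtain the surjection
\[
\bZ_{(p)}\{1\} \oplus \bigoplus_{i=1}^{p-1} M_n(y^i) \twoheadleftarrow CH^*(R_a),
\]
and the main obstacle is promoting this surjection to an isomorphism. The free part matches automatically, since $res_{CH}\otimes\bQ$ is an isomorphism of rank-$p$ $\bZ_{(p)}$-modules. Equality of the torsion parts requires running the AHss of Theorem 2.2 on $R_a$ with $I(V_a) = I_{n+1}$ exactly, and invoking Corollary 3.5 under Rost nilpotence (so that $res_\Omega$ is injective on $R_a$); together these control all torsion and rule out any summand beyond the $\bZ/p\{c_j(y^i)\}$ already produced by the $M_n(y^i)$. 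Controlling the AHss at this level of precision is where the sharpest combinatorial input about $R_a$ is needed.
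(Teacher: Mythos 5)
Your proof proposal mirrors the paper's own post-hoc remark (right after Theorem 4.7) that one \emph{could} try to derive the statement from Lemmas 4.2 and 4.5, but the paper does not actually prove Theorem 4.6: it cites Rost and Voevodsky and explicitly states that ``the original proof \ldots is better than ours.'' So what you are attempting is not a reconstruction of a proof the paper contains, but an expansion of a suggestive aside, and the two principal gaps in your write-up are exactly the ones the paper is implicitly deferring to Rost--Voevodsky.

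First, the construction of the lifts $c_j(y^i)$ for $i>1$. Lemma 4.5 combined with the identification $a'=\tau^{-1}(a)$ (the $s=1$ case of $(**)$) gives $c_j(y)=Q_0\cdots\hat Q_j\cdots Q_{n-1}(a')$ and hence $deg_v(y)\ge n$, but this produces elements only in the degree range of $y$ itself. Your phrase ``analogous symbol-product constructions for the higher powers'' is not an argument: there is no evident $x'$ of weight 1 whose iterated Milnor operations land in degree $|y^i|$ for $i\ge 2$. Producing $c_j(y^i)$ requires the multiplicative structure of $\Omega^*(R_a)$ and the Margolis-homology computations in the Rost--Voevodsky machinery, precisely the ``combinatorial input'' you defer to but cannot bypass.

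Second, and more seriously, the passage from the surjection of Corollary 3.6 to an isomorphism. You openly concede this is ``where the sharpest combinatorial input about $R_a$ is needed,'' which is an admission of a gap, not a proof. Moreover the tool you invoke to close it, Corollary 3.5, has as hypothesis that $res_\Omega:\Omega^*(R_a)\to\Omega^*(\bar R_a)$ is injective. That injectivity is the content of Theorem 4.7, attributed separately to [Vi-Ya], [Ya6], and is at least as deep as Theorem 4.6 itself. Invoking it here makes the argument circular relative to the body of results the paper is presenting: you would be proving Theorem 4.6 by assuming Theorem 4.7, and Theorem 4.7 in turn is stated in the paper only after, and in the context of, the Rost--Voevodsky description of $CH^*(R_a)$. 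A minor additional point: deducing $I_{n+1}\subset I(V_a)$ from $\pi_*[V_a]=v_n$ by the Landweber--Novikov argument is fine, but Lemma 4.3 is stated with hypothesis $I_n\subset I(X)$ and conclusion $\Lambda(Q_0,\ldots,Q_{n-1})$-freeness; what is actually needed and used downstream is the $\Lambda(Q_0,\ldots,Q_{n-1})$-freeness, so your slide to $\Lambda(Q_0,\ldots,Q_n)$ is a detail that should be stated carefully to match the indexing in Lemma 4.2.
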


We can also prove  
\begin{thm} ([Vi-Ya], [Ya6])
The map  $res_{\Omega}:
\Om(R_a)\to \Om(\bar R_a)$ is injective and 
\[Res_{\Omega}(X)=\Om\{1\}
\oplus I_n\{y,...,y^{p-1}\}\subset \Om[y]/(y^p)\cong \Omega^*(\bar R_n).\]
\end{thm}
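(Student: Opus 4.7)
The plan is to prove the image calculation first, then deduce injectivity from a structural comparison between $\Omega^*(R_a)$ and its image.

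First, since $\bar R_a$ is a summand of the cellular variety $\bar V_a$, it is itself cellular, so $\Omega^*(\bar R_a) \cong \Omega^*[y]/(y^p)$ is $\Omega^*$-free with basis $1, y, \ldots, y^{p-1}$. By Theorem 4.5, for each $1 \le i \le p-1$ we have $deg_v(y^i) = n$, which furnishes elements $c_j(y^i) \in \Omega^*(R_a)$ for $0 \le j \le n-1$ satisfying $res_\Omega(c_j(y^i)) \equiv v_j y^i \pmod{I_{\infty}^2 \Omega^*\{y^i\}}$.

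For the image calculation, applying Lemma 3.1 to each $y^i$ gives $I_n\{y^i\} \subset Res_\Omega$, and summing (together with the trivial $1 \in Res_\Omega$) yields
\[
\Omega^*\{1\} \oplus I_n[y]^+/(y^p) \subset Res_\Omega.
\]
For the reverse inclusion, suppose some graded leading term $v^\alpha y^i$ with $v^\alpha \notin I_n$ and $i \ge 1$ appeared in the image; then applying suitable Landweber--Novikov operations $r_\beta$ as in the proof of Lemma 3.1 would force $y^i \in Res_\Omega$, contradicting $deg_v(y^i) > 0$. Similarly, $v_j y^i \in Res_\Omega$ for $j \ge n$ would contradict $deg_v(y^i) = n$. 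Hence $Res_\Omega$ is contained in the claimed submodule, which establishes the equality.

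For injectivity, the set $\{1\} \cup \{c_j(y^i) : 0 \le j \le n-1,\ 1 \le i \le p-1\}$ generates $\Omega^*(R_a)$ as an $\Omega^*$-module, since modulo $I_{\infty}\,\Omega^*(R_a)$ it recovers the generators of $CH^*(R_a)$ from Theorem 4.5. A rational rank comparison shows that $\Omega^*(R_a) \otimes \bQ$ and the image $Res_\Omega \otimes \bQ$ both have rank $p$ over $\Omega^* \otimes \bQ$, so any element of $\Ker(res_\Omega)$ is $\Omega^*$-torsion. To rule out such torsion one invokes the AHss for $\Omega^*(R_a)$ together with the fact that $H^{*,*'}(R_a;\bZ/p)$ is $\Lambda(Q_0, \ldots, Q_{n-1})$-free (Lemma 4.3 adapted to the Rost motive), which identifies every $\Omega^*$-relation in $\Omega^*(R_a)$ with one already visible in $Res_\Omega$.

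The main obstacle is the injectivity step. The image calculation is a fairly direct consequence of Lemma 3.1 and the definition of $deg_v$, but injectivity requires the AHss analysis to guarantee that $\Omega^*(R_a)$ carries no hidden $\Omega^*$-module relations beyond those imposed by the image; this is where the motivic cohomology structure of Rost motives, rather than just the Chow ring computation of Theorem 4.5, becomes essential.
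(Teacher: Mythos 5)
The paper does not prove this theorem itself; it is quoted from [Vi-Ya] and [Ya6], so your proposal has to stand on its own, and it has a circularity at the start and a broken step in the middle. The circularity: you invoke $deg_v(y^i)=n$ as input, but in the paper's logic this is Corollary~4.8, deduced \emph{from} Theorem~4.7 --- the inclusion $v_{n-1}\in I_n$ and exclusion $v_n\notin I_n$ in the displayed formula for $Res_{\Omega}$ are exactly what give $deg_v(y^i)=n$. What is available independently (from Theorem~4.6 together with the $Q$-operation construction of $c_j(y^i)$ sketched after Corollary~4.8, i.e.\ Lemma~4.2) is only the lower bound $deg_v(y^i)\ge n$. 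The upper bound $v_n y^i\notin Res_{\Omega}$ is precisely the content of the reverse inclusion you are trying to establish, so it cannot be assumed.

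The broken step is the Landweber--Novikov argument for the reverse inclusion. The mechanism of Lemma~3.1 is that $r_{p^j\Delta_{m-j}}$ replaces $v_m$ by $v_j$ modulo $I_j^2$: it lowers the index of a $v$-factor, it does not remove one. From $v^\alpha y^i\in Res_{\Omega}$ with $v^\alpha\notin I_n$ (say $v^\alpha=v_n$, or $v_n v_{n+1}$) you therefore cannot conclude $y^i\in Res_{\Omega}$. The reverse inclusion actually rests on the fact, recalled in the Remark at the end of \S 2, that $\Omega^*(R_a)$ is generated as an $\Omega^*$-module in nonnegative degrees; combined with Theorem~4.6 this shows $Res_{\Omega}$ is the $\Omega^*$-submodule of $gr\Omega^*(\bar R_a)$ generated by $1$ and by the leading terms $v_jy^i$ of $res_{\Omega}$ applied to lifts of the $c_j(y^i)$, and $\sum_{j<n}\Omega^*\{v_jy^i\}=I_n\{y^i\}$ then gives the claimed formula without any appeal to $deg_v(y^i)=n$. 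Finally, for injectivity your rank count only shows $\Ker(res_{\Omega})$ is $\Omega^*$-torsion; killing that torsion needs an actual computation of the relations in $\Omega^*(R_a)$ via the motivic AHss (this is the substance of [Vi-Ya],[Ya6]), not just the $\Lambda(Q_0,\ldots,Q_{n-1})$-freeness of Lemma~4.3 --- you correctly flag this as the hard point but do not close the gap.
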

\begin{cor}    For the  Rost motive $R_{n}$ and
$1\le i\le p-1$, we have  $deg_v(y^i)=n$ and
 $CH^*(R_n)/p\cong P_1(R_n).$\end{cor}

By Rost and Voevodsky, it is known that 
([Ro1,2], [Vo1-3], [Su-Jo])
\[ H^{*,*'}(\chi_a;\bZ/p)\cong H^{*,*'}(R_a;\bZ/p)\]
for all $ *\le 2b_n,$ $ *'\le *,$
and there is a short exact sequence [Or-Vi-Vo]
\[ 0\to H^{*+1,*}(\chi_a;\bZ/p)\stackrel{\tau}{\to}
 K_{*+1}^M(k)/p \to K_{*+1}^M(k(V_a))/p \]
where we identify $H^{*+1,*+1}(\chi_a;\bZ/p)\cong K_{*+1}^M(k)/p$.
Hence there exists $a'=\tau^{-1}(a)\in H^{*+1,*}(R_a;\bZ/p)$.

We can define $y\in CH^*(\bar R_a)$ by 
\[ y=p^{-1}res_{\Omega}( Q_1...Q_{n-1}(a'))\quad in\ 
gr \Omega^*(\bar R_n)\cong \Omega^*\otimes CH^*(\bar R_n).\]
We can prove Theorem 6.6 by using  
Theorem 6.5, 6.7, while to show Assumption 6.4  is rather difficult, and we still need Voevodsky' s original proof  ([Vo1,2,4]).

\section{Dickson invariant}

The contents in this section are not used in other sections
but arguments are very similar to those given in the preceding
section.  
Throughout this section, we assume that
$k$ contains $\xi_{p^2}$
a primitive $p^2$-root of the unity.
Since 
\[Q_0(\tau)=\xi_p=0\quad in\ k^*/(k^*)^p\cong H^{1,1}(pt.;\bZ/p),\]
we see $Q_i$ commutes with $\tau$, i.e., 
$  Q_i(\tau\cdot z)=\tau\cdot Q_i(z)$
for $z\in H^{*,*'}(X;\bZ/p)$.

We consider the case $X=B(\bZ/p)^{n}$
(while $\bar X$ is of course not cellular) with 
\[H^{*,*'}(X;\bZ/p)\cong K_*^M(k)\otimes \bZ/p[\tau,y_1,...,y_n]\otimes \Lambda(x_1,...,x_n)\]
for $p$ odd prime.  
Then $x=x_1...x_n$ is invariant under the $SL_n(\bZ/p)$-action.   So each
\[q_i(x)=Q_0...\hat Q_i...Q_n(x)\quad x=x_1...x_n\]
is $SL_n(\bZ/p)$- invariant.
Its invariant ring on the polynomial ring is well
known by Dickson
\[ \bZ/p[y_1,...,y_n]^{GL_{n}(\bZ/p)}\cong
\bZ[c_{n,0},....,c_{n,n-1}],\]
\[ \bZ/p[y_1,...,y_n]^{SL_{n}(\bZ/p)}\cong
\bZ[e_{n}, c_{n,1},....,c_{n,n-1}],\]
where $e_{n}^{p-1}=c_{n,0}$ and each $c_{n,i}$ is defined
by
\[ \Pi_{y\in \bZ/p\{y_1,...,y_n\}}(t+y)=t^{p^{n}}+\sum t^{p^{i}}c_{n,i}.\]
These $c_{n,i}$ is also represented by Chern classes.
Let $reg:(\bZ/p)^n\to U({p^n})$ be the (complex)
regular representation for the unitary group $U(p^n)$.  Then
\[ c_{n,i}=c_{p^n-p^i}(reg)=reg^*(c_{p^n-p^i}).\]

It is known by Mimura-Kameko [Mi-Kam]
\[ e_n c_{n,i}=Q_0...\hat Q_i...Q_n(x_1...x_n).\]
Hence the element $e_{n}c_{n,i}$ is just $q_i(x)$.
Hence we get 
\[q_i(x)=c_{n,i}e_{n}=reg^*(c_{p^{n}-1})^{1/(p-1)}
  reg^*(c_{p^{n}-p^i}).
\]

Each element    $ a\in H^{1,1}(X;\bZ/p)\cong H_{et}^1(X;\bZ/p)$
can be represented by $\bA^1$-homotopy map
$X\to B\bZ/p$.  So we can write
\[ a=a_1...a_n=i_{a}^*(x_1...x_n) \quad for\ i_a:X\to  (BZ/p)^n.\]
\begin{thm}  Let $p$ be odd.
For an element  $a=a_1...a_n\in H^{n,n}(X;\bZ/p)$,
we have
\[  q_i(a)=Q_0... \hat Q_i... Q_{n}(a)\]
\[   =i_a^*(e_{n}c_{n,i})=i^*_a((reg^*(c_{p^{n}-1}))^{1/(p-1)}
 reg^*(c_{p^{n}-p^{i}}))\]
in $H^{2*,*}(X;\bZ/p)$.  Moreover we have
$pq_i(x)=v_iq_0(x)$ in  $BP^*(X)/(I_{\infty}^2,v_j|j>i).$
\end{thm}

It is known ([Vo1,2]) that
\[H^{*,*'}(B(\bZ/2)^n;\bZ/2)\cong K_*^M(k)\otimes \bZ/2[\tau,y_1,...,y_n, x_1,...,x_n]/(Re.)\]
\[ where\ \ Re.=(x_i^2=\tau y_i \ |\  1\le i \le n).\]
Giving grading by multiplying $\tau$, we have
\[(*)\quad gr H^{*,*'}(B(\bZ/2)^n;\bZ/2)\cong K_*^M(k)\otimes \bZ/2[\tau,y_1,...,y_n]
\otimes \Lambda(x_1,...,x_n).\] 
The above theorem also holds for $p=2$, considering
this graded ring \\
$grH^{*,*'}(B(\bZ/2)^n;\bZ/2)$ instead of 
$H^{*,*'}(B(\bZ/2)^n;\bZ/2)$.

Here, however, we give an another argument for $p=2$,
which is quite similar to that in the previous section.   Define
$d_{n,i}\in\bZ/2[x_1,...,x_n]$
by\[ \Pi_{x\in \bZ/2\{x_1,...,x_n\}}(t+x)=t^{2^{n}}+\sum t^{2^{i}}d_{n,i}\]
so that $d_{n,i}^2=c_{n,i}$ and we have the isomorphism
\[ \bZ/2[x_1,...,x_n]^{GL_n(\bZ/2)}\cong \bZ/2[d_{n,0},...,d_{n,n-1}].\]
Moreover for the real regular representation
$reg_{\bR}:(\bZ/2)^n\to BO(2^n)$ for the special orthogonal group
$SO(2^n)$, we have
\[ \tau^{-2^{n-1}+2^{i-1}+1} reg_{\bR}^*(w_{2^{n}-2^{i}})=d_{n,i}\]
where we identify the element $d_{n,i}$ with a homogeneous
element of $w(d_{n,i})=2$. 
We know from Lemma 5.7 in [Kam-Te-Ya]
\[ d_{n,i} =Q_0...\hat Q_i...Q_{n-2}(x_1...x_n)\quad for
\ i\le n-2\]
where $d_{n,i}$ is considered  in $(*)$
 (see for details $\S 5$ in [Kam-Te-Ya]).
\begin{thm}
Let $a=a_1...a_n\in H^{n,n}(X;\bZ/2)$ and $\tau a'=a$.
Then we have
\[  \tau \cdot q_i(a')=\tau\cdot Q_0... \hat Q_i... Q_{n-2}(a')
=Q_0...\hat Q_i...Q_{n-2}(a)\]
\[   =i_a^*(d_{n,i})=
i^*_a( \tau^{-2^{n-1}+2^{i-1}+1}reg^*_{\bR}(w_{2^{n}-2^i}))
.\]
\end{thm}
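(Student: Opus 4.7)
The plan is to chain together the four displayed equalities, invoking one identity at each step. The first equality is purely notational; it is the definition of $q_i(1)'$ in the conventions of Section~4 (specialized to $s=1$, with the range of operations ending at $Q_{n-2}$), so the real content lies in the three remaining equalities.

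For $\tau\cdot Q_0\cdots\hat Q_i\cdots Q_{n-2}(a')=Q_0\cdots\hat Q_i\cdots Q_{n-2}(a)$, I use $\tau a'=a$ and commute $\tau$ past every $Q_j$. The commutation $Q_j(\tau z)=\tau Q_j(z)$ is exactly the observation made at the top of this section: the standing assumption $\xi_{p^2}\in k$ forces $Q_0(\tau)=\xi_p=0$ in $H^{1,1}(\mathrm{Spec}(k);\bZ/2)$, and then the Cartan formula together with the inductive definition $Q_{j+1}=Q_jP^{p^j}-P^{p^j}Q_j$ propagates the commutation to all $j$.

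For the equality with $i_a^*(d_{n,i})$, I invoke the $\bA^1$-homotopy class $i_a:X\to(B\bZ/2)^n$ realizing $a=i_a^*(x_1\cdots x_n)$, which exists because elements of $H^{1,1}(X;\bZ/2)\cong H_{et}^1(X;\bZ/2)$ classify $\bA^1$-maps to $B\bZ/2$. Naturality of the Milnor operations rewrites $Q_0\cdots\hat Q_i\cdots Q_{n-2}(a)$ as $i_a^*$ applied to the universal expression $Q_0\cdots\hat Q_i\cdots Q_{n-2}(x_1\cdots x_n)$, and the quoted Lemma~5.7 of [Ka-Te-Ya] identifies the latter with $d_{n,i}$ in the graded ring displayed in $(*)$. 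The final equality is then the identification of $d_{n,i}$ with the $\tau$-rescaled Stiefel--Whitney class $\tau^{-2^{n-1}+2^{i-1}+1}reg_{\bR}^*(w_{2^n-2^i})$ recorded in the paragraph preceding the theorem, pulled back by $i_a^*$.

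The hard part is the bookkeeping of the $\tau$-filtration: the Ka-Te-Ya identity lives a priori in the graded quotient $H^{*,*'}/(\tau\cdot H^{*-1,*'})$ and not in $H^{*,*'}$ itself, and the identification of $d_{n,i}$ with a $\tau$-power times $reg_{\bR}^*(w_{2^n-2^i})$ involves a carefully chosen exponent. The hypothesis $\tau a'=a$ is precisely what lifts the graded identity to the unquotiented motivic cohomology: because $a$ is $\tau$-divisible, the factor of $\tau$ on the left side of the equation matches the factor of $\tau$ produced by the Stiefel--Whitney identification of $d_{n,i}$. I expect to need to verify that the exponent $-2^{n-1}+2^{i-1}+1$ correctly absorbs the total weight shift arising from composing the $n-2$ operations $Q_j$ (each of weight $-1$) with the weight of $a'$, and that both sides actually land in the same bi-degree.
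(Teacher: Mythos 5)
The paper states this theorem without proof; it is intended to follow directly from the three ingredients laid out in the paragraph immediately preceding it (the commutation $Q_j(\tau z)=\tau Q_j(z)$ forced by $\xi_{p^2}\in k$, the cited Lemma 5.7 of [Ka-Te-Ya], and the Stiefel--Whitney identification of $d_{n,i}$). Your proposal correctly reconstructs this chain: the first equality is definitional, the second is the $\tau$-commutation, the third is naturality of the $Q_j$ combined with [Ka-Te-Ya], and the fourth is the rescaled $w_{2^n-2^i}$ formula. So the overall route matches the paper's intent, and you have identified the right lemma at each step.

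However, the place where you flag trouble is genuinely problematic, and your proposed resolution does not close the gap. The [Ka-Te-Ya] identity $d_{n,i}=Q_0\cdots\hat Q_i\cdots Q_{n-2}(x_1\cdots x_n)$ is stated in the paper to hold only in the quotient $H^{*,*'}/(\tau\cdot H^{*-1,*'})$. Pulling it back along $i_a^*$ therefore gives $Q_0\cdots\hat Q_i\cdots Q_{n-2}(a)\equiv i_a^*(d_{n,i})$ only modulo the $\tau$-image. But the hypothesis $\tau a'=a$ makes matters worse, not better: it forces $Q_0\cdots\hat Q_i\cdots Q_{n-2}(a)=\tau\cdot Q_0\cdots\hat Q_i\cdots Q_{n-2}(a')$ to lie in the $\tau$-image, where the graded identity degenerates to $0\equiv 0$ and says nothing. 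So the "lift" you invoke does not follow from $\tau a'=a$; if anything, one must show separately that the error term in the graded identity pulls back to zero under $i_a^*$, or that [Ka-Te-Ya] in fact holds on the nose for the particular representative of $d_{n,i}$ given by $\tau^{-2^{n-1}+2^{i-1}+1}reg_{\bR}^*(w_{2^n-2^i})$. You also rightly note the need to check bi-degrees; if you carry that out (try $n=2$ and compare $|Q_0\cdots\hat Q_i\cdots Q_{n-2}(a)|$ against $|d_{n,i}|$) you will find they do not match as written, which suggests an index shift somewhere in the theorem statement or in the transcription of Lemma 5.7. None of this is a defect of your proposal relative to the paper --- the paper supplies no proof and glosses over exactly this point --- but the sentence claiming that $\tau a'=a$ "lifts the graded identity" should be replaced by an honest acknowledgment that the lift requires additional justification not contained in the stated ingredients.
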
 
{\bf Remark.}
Note that when $a\in K_*^M(k)/2\cong H^{*,*}(pt.;\bZ/2)$ and $X=R_n$, the element $q_i(a')=c_i(y)$ in the previous 
section. However, the value of the above equation is zero (indeed $Q_ia=0$). Hence the above theorem
may be seen  an extension of the fact $\tau c_i(y)=0$.

{\bf Remark.}
Smirnov and Vishik constructed the (generalized)
Arason map $e_n:I^n/I^{n+1}\to K_*^M(k)/2$
where $I$ is the fundamental ideal of the Witt ring $W^*(k)$
generated by even dimensional quadratic forms.
They used the Nisnevich classifying space $BO(n)_{Nis}$.
In this paper we write the etale classifying space
by $BO(n)_{et}$ or simply $BO(n)$.  The motivic cohomology is given as 
(Theorem 3.1.1 in [Sm-Vi])
\[ H^{*,*'}(BO(n)_{Nis};\bZ/2)\cong H^{*,*'}(pt.:\bZ/2)\otimes \bZ/2[u_1,...,u_n] \]
where $u_i=\tau^{-[(i+1)/2]}w_i$ identifying
$w_i\in H^{i,i}(BO(n)_{et};\bZ/2)$.  These elements are called as subtle characteristic classes in the paper [Sm-Vi].

Let $X_q$ be the quadric for a quadratic form $q$, and 
$\chi_q=\chi_{X_q}$.  Smirnov-Vishik proved (Theorem 3.2.34, Remark 3.2.35 in [Sm-Vi]) that
when $q\in I^n$, there is a map $f_q: \chi_q\to BO(N)_{Nis}$
(with $N=dim(q)$), and the Arason map $e_n(q)$ can be  defined as (for $1\le i\le n-2$)
\[   e_n(q)=\tau (Q_0...\hat Q_i...Q_{n-2})^{-1}f_q^*(u_{2^{n}-2^i}).\]
We consider the following diagram (not assumed commutative) for $*''=2^{n }-2^i$
\[ \begin{CD}
H^{*,*'}(\chi_q;\bZ/2) @<{f_q^*}<<  H^{*,*'}(BO(N)_{Nis};\bZ/2)
@<{\tau^{-*''/2}j^*}<<   H^{*,*'}(BO(N)_{et};\bZ/2)\\
  @VVV  @.   @VVV\\
   H^{*,*'}(X_q:\bZ/2) @<{i_{a}^*}<< H^{*,*'}(B(\bZ/2)^n)_{et};\bZ/2)
       @<{\tau^{-*''/2+1}j^*reg^*}<< H^{*,*'}(BO(2^n)_{et};\bZ/2)
\end{CD}\]
where $N\ge max(dim(q), 2^n)$, 
and $j: BO(N)_{Nis}\to BO(N)_{et}$
are natural induced maps, and $i_a^*$ is the induced map
from $a=a_1...a_n\in H_{et}^n(X_q;\bZ/2)$.  Hence
we have  two elements
\[ u=f^*_q(u_{*''})=  f_q^* (\tau^{-*''/2}\cdot j^*(w_{*''}))
\]
\[v=i_{e_n(q)}^*d_{n,i}=i_{e_n(q)}^*
(\tau^{-*''/2+1}\cdot reg^*_{\bR}(w_{*''})).\]
Then we see $\tau u=v$, but it is only zero.
Here there is $x'\in H^{1,0}((B\bZ/2)_{Nis};\bZ/2)$
such that $\tau x'=x$.  hence $\tau^{-1}d_{n.i}$
exists in $H^*(B(\bZ/2)^n_{Nis};\bZ/2)$.
However we can not see a good map $X_q\to B(\bZ/2)^n_{Nis}$.

\section{quadrics and $p=2$}

In this section, we rewrite arguments of quadrics by
using $deg_v(y)$, $M_n(y)$ and $P_1(X)$.
Let $h\in CH^1(X)$ be the hyperplane section.
Let $X$ be a quadric of $dim(X)=2{\ell}-1$. (The case $dim(X)=2\ell$ is given with some modification of odd case.)  From Rost and Toda-Watanabe ([Ro1], [Tod-Wa])
\[ CH^*(\bar X)\cong \bZ_{(2)}[h,y]/(h^{\ell}=2y,y^2)
\] \[
\stackrel{  }{\cong}
\bZ_{(2)}\{1,h,...,h^{\ell-1}\}\oplus \bZ_{(2)}\{y,...,h^{\ell-1}y\}.\]
Hence $h\in Res_{\Omega}(X)$ and $deg_v(h)=0$.
Note that $CH^*(\bar X)\cong \bZ_{(2)}$ for each 
$0\le *\le 2\ell-1$.  Hence from Lemma 3.4,
\begin{lemma}
We have $P_1(X)\subset CH^*(X)/2.$
\end{lemma}

Throughout this section, we assume that $X$ is $anisotropic$.

This means
$deg_v(h^{\ell-1}y)\ge 1$ since $h^{\ell-1}y$ is the fundamental class of the $(2\ell-1)$-dimensional
manifold $\bar X$.  Thus we get
\[deg_v(h^i)=0,\quad deg_v(h^iy)\ge 1\quad for\ all \ 0\le i\le \ell-1.\]

\begin{lemma} Let $X$ be an (anisotropic)
quadric of $dim(X)=(2\ell-1)$.  Let us write $d_i=deg_v(h^iy)$.
Then
$ d_0\le d_1\le...\le d_{\ell-1}$ and 
\[ P_1(X)\cong \oplus_{0\le i\le \ell-1}( M_0(h^i)\oplus M_{d_i}(h^iy)).\]
\end{lemma}
\begin{proof}
If $v_nh^iy\in Res_{\Omega}(X)$, then so is $h\cdot v_nh^iy$ from $h\in Res_{\Omega}(X)$.  This implies
$d_i\le d_{i+1}$.
Since $J(h^iy)\cong I_{d_i}$ $ mod(I_{\infty}^2)$, we have $C(h^iy)
\cong M_{d_i}(h^iy)$.
\end{proof}

We recall that
\[ M_0(h^i)\oplus M_{d_i}(h^iy)\cong
\bZ/2\{h^i,c_0(h^iy)\}\oplus \bZ/2\{c_1(h^iy),...,c_{d_i-1}(h^iy)\}.\]
We note $c_0(h^iy)=2h^iy=h^{\ell+i}$.
Hence we can write
\[(*)\quad  P_1(X)\cong \bZ/2[h]/(h^{2\ell})\oplus \oplus _{0\le i\le \ell-1}\bZ/2\{c_1(h^iy),...,c_{d_i-1}(h^iy)\}.\]

Note that the Rost motive is isomorphic to
\[ CH^{*-k_i}(R_{d_i})/2\cong 
\bZ/2\{h^{k_i},c_0(h^iy)\}\oplus \bZ/2\{c_1(h^iy),...,c_{d_i-1}(h^iy)\}\]
where 
$ k_i=1/2(|h^iy|-|R_{d_i}|)=i+(2^{\ell}-2^{d_i}).$ 
Therefore we have
\begin{cor}
If the motive $M(X)$ is a sum of Rost motives
(e.g., an excellent quadric), then 
$CH^*(X)/2\cong P_1(X)$ as above. 
In particular
the set
$ \{k_0,...,k_{\ell-1}\}=\{0,1,...,\ell-1\}.$
\end{cor}
By using $(*)$ as above, we can prove
\begin{cor}  (Theorem 4.7 in [Ya3])
The Chow ring $CH^*(X)/2$ has the subring\[ P_1(X)\cong \bZ/2[h]/(h^{2\ell})\oplus_{j=0}^{s} \bZ/2[h]/(h^{\ell-f_j})\{u_j\}\]
where  $f_j$ is the smallest $f$ such that
$v_jh^fy\in Res_{\Omega}(X)$  (i.e., $deg_v(h^fy)=j+1$),  and $u_j=c_j(h^{f_j}y)=v_jh^{f_j}y$,
and so $s=deg_v(h^{\ell-1}y)-1$.
\end{cor}
\begin{proof}
 The result follows from changing order of sums
such that
\[ Res_{\Omega}(X)\supset \oplus I_{d_i}\{h^iy\}=\oplus_{i,j} (v_jh^iy)=\oplus h^{i-f_j}(v_jh^{f_j})=\oplus (h^{i-f_j}u_j).\]
(For details see the proof of Theorm 4.7 in [Ya3].)
\end{proof}

Let $a=a_0...a_n \not =0 \in K_{n+1}^M(k)/2$
and $\phi_{n+1}=\la \la a_0,..., a_n\ra \ra $ be 
 the $(n+1)$-th Pfister form associated to $a$.
Let $q'$ is the subform of codimension $1$,
that is, $q'$ is the maximal neighbor of the $(n+1)$-th Pfister form $\phi_{n+1}$.

{\bf Example 1.} ([Ya3]) 
Let  $X=X_{q'}$ be the maximal neighbor of the $(n+1)$ Pfister quadric $(\ell=2^{n}-1$).  Then $deg_v(yh^i)=n$ for all $0\le i\le \ell-1$.
Hence we have the ring isomorphism 
\[CH^*(X_{q'})/2\cong \bZ[h]/(h^{2^{n+1}-2})\oplus
\bZ/2[h]/(h^{2^n-1})\{u_1,...,u_{n-1}\}\]
where $u_i=c_i(y)=v_iy\in \Omega^*(\bar X)$ so that  $u_iu_j=0$.

{\bf Example 2.}
 ([Ya3])
Let $X_{q''}$ be 
 the minimal       neighbor of the $(n+1)$ Pfister quadric ($\ell=2^{n-1}$), namely, the norm variety.  Then $deg_v(vh^{\ell-1 })=n$ and $deg_v(yh^i)=n-1$ for all $0\le i\le \ell-2$. Hence we have the ring isomorphisms 
\[CH^*(X_{q''})/2\cong 
\bZ/2[h]/(h^{2^{n}})\oplus
\bZ/2[h]/(h^{2^{n-1}})
\{u_1',...,u_{n-2}'\}
\oplus \bZ/2\{u_{n-1}'\}\]
where  $u'_i=c_i(y)$ for $1\le i\le n-2$ and $u_{n-1}'=c_{n-1}(h^{2^{n-1}-1}y)$.

Let $f: X\to Y$ be a map of anisotropic quadrics.
Suppose $f^*(h_Y)\not =0 \in CH^2(\bar X)/2$
(i.e. $f^*(h_Y)=h_X$).  Let $y_Y$  be
the ring generator of $CH^*(\bar Y)$ such that
 $2y_Y=h^{\ell_Y}$ for $\ell=1/2(dim(Y)-1)$.
Similarly define the ring generator $y_X$ in $CH^*(\bar X)/2$.
Then 
$f^*(y_Y)=h^{\ell_Y-\ell_X}y_X$ (from $2y_X=h^{\ell_X}$), and  so 
$dim(X)\le dim(Y)$. Thus we see

(1)\quad $ f^*: CH^*(\bar Y)\to CH^*(\bar X)$ is injective
for $*\le dim(X)$.

(2)\quad  each $h^jy_X\not \in Res_{CH}$
for
$0\le j \le \ell_X-1$.

\noindent Recall $P_1^N(X)=\oplus_{|y_i|\le N}
M_{d_i}(y_i)$.  From Lemma 3.6,  we have
\begin{thm}
Let $f:X\to Y$ be a map of anisotropic quadrics
such that $f^*:CH^2(\bar Y)/2\to CH^2(\bar X)/2$ is
 non zero.
Then the induced map  
\[ P_1^{ dim(X)}(Y) \to P_1(X)\quad is\ injective.\]
\end{thm}

{\bf Example 3.}
Let $X_{min}$ (resp. $X_{max}$) be the minimal
(resp. maximal) neighbor of the $(n+1)$-Pfister
quadric and $e:X_{min}\subset X_{max}$ be the embedding.
Note $dim(X_{min})=2^n-1$ and we see
\[ P_1^{ 2^n-1}(X_{max})\cong \bZ[h]/(h^{2^n})\oplus \bZ/2\{
u_1,...,u_{n-1}\}\]
from Example 1.  Hence we see
that it injects into $P_1(X_{min})$, by  $e^*(u_i)=h^{2^{n-1}-1}u_i'$ for $i\le n-2$ and 
$e^*(u_{n-1})=u_{n-1}'$.

\begin{cor}
Let $Y$ be an anisotropic  quadric and $X=Y\cap H^{2d}$ for $2d$-dimensional plane $H^{2d}$, and 
$e:X\subset Y$ be the induced embedding.
Then the Gysin map $e_*:P_1(X)\to P_1(Y)$ is injective.
Let us write by $f_j(X)$ (resp. $f_j(Y)$)
the smallest number $f$ such that $v_jh^fy_X\in Res_{\Omega}(X)$ (resp. $v_jh^fy_Y\in Res_{\Omega}(Y)$).
Then we have
\[  f_j(Y)-d\le f_j(X)\le f_j(Y)+d.\]
\end{cor}
\begin{proof}
First note that $e^*(y_Y)=h^dy_X$.
Note  $e_*(1)=h^{2d}$ and $e_*(y_X)=h^dy_Y$.
Let $f=f_j(Y)$.  Then $v_jh^fy_Y\in Res_{\Omega}(Y)$ 
and hence 
\[ e^*(v_jh^{f}y_Y)=v_jh^{f+d}y_X\in Res_{\Omega}(X).\]
This means $f_j(X)\le f_j(Y)+d$. 
If $v_jh^{g}y_X\in Res_{\Omega}(X)$ then
$ e_*(v_jh^gy_X)=v_jh^{g+d}y_Y\in Res_{\Omega}(Y).$
This fact implies $f_j(Y)\le f_j(X)+d$.
\end{proof}

 \section{Lie groups $G$ and the  flag manifolds $G/T$}  

Let $Y$ be a simply connected $H$-space of finite type
(e.g.  compact Lie group).
By Borel, its mod $p$-cohomology is (for $p$ odd)
\[ H^*(Y;\bZ/p)\cong P(y)/p\otimes \Lambda(x_1,...,x_{\ell}),\quad P(y)=\otimes_i^s\bZ_{(p)}[y_i]/(y_i^{p^{r_i}})
\]
where $|y_i|$ is even and $|x_j|$ is odd.  When $p=2$, a graded ring $grH^*(Y;\bZ/2)$ is isomorphic to the
right  hand side ring, e.g. $x_j^2=y_{i_j}$ for some $y_{i_j}$.

For ease of arguments, we assume $p$ $odd$
in this section  (however the similar results also hold for $p$=2).
Let us write by $QH^*(-)$ the indecomposable
module
\[QH^*(Y;\bZ/p)=H^*(Y;\bZ/p)/(H^{+}(Y;\bZ/p))^2 \cong \bZ/p\{y_1,...,y_s,x_1,...,x_{\ell}\}.\]
Then it is known by Lin, Kane [Ka]
\begin{lemma} (J.Lin, $\S 35$ in [Ka])
Let $p$ be an odd prime and  $Y$ be a finite simply connected $H$-space.  Then
\[Q^{even}H^*(Y;\bZ/p)=\sum_{n\ge 0}\beta P^nQ^{odd}H^*(Y;\bZ/p).\]
\end{lemma}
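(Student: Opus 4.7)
The plan is to follow J. Lin's original argument as presented in Kane's monograph [Ka, \S 35]. The statement is equivalent to showing that every even indecomposable $y_i$ lies in the $\bZ/p$-span of $\{\beta P^n x_j\}$ modulo decomposables, since the reverse containment is immediate from parity: $\beta$ and $P^n$ raise degree by odd and even amounts respectively, so $\beta P^n$ applied to an odd class is even.

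First, I would invoke Borel's structure theorem, already stated just above: for a simply connected finite $H$-space $Y$ at an odd prime $p$,
\[H^*(Y;\bZ/p)\cong \bigotimes_i \bZ/p[y_i]/(y_i^{p^{r_i}})\otimes \Lambda(x_1,\dots,x_\ell),\]
with all heights $p^{r_i}$ finite precisely because $Y$ is finite-dimensional. This finiteness is the crucial input: an infinite polynomial algebra on an even generator would evade the conclusion.

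Second, for each even polynomial generator $y_i$, I would exploit the height relation $y_i^{p^{r_i}}=0$. The idea is that this relation, combined with the derivation property of $Q_0=\beta$ and the Cartan formula for $P^n$, forces a nontrivial Steenrod-algebra relation between $y_i$ and the odd generators. A Kudo/transgression-type argument inside the Hopf algebra $H^*(Y;\bZ/p)$ itself (treating $Y$ as an $H$-space rather than passing to a classifying space) then produces an odd class $x$ and an operation $\theta$ of odd total degree with $\theta(x)\equiv y_i$ modulo decomposables.

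Third, I would use the Adem relations at odd primes together with the Milnor basis to reduce $\theta$ to the specific form $\beta P^n$. Any composition of $\beta$'s and $P^j$'s of odd total degree in which $\beta$ appears more than once vanishes (since $\beta^2=0$), and any admissible monomial of odd total degree with $\beta$ in an interior position can be rewritten modulo decomposables so that the leading factor is $\beta P^n$; higher-excess compositions contribute only to $(H^+)^2$ and thus vanish in $QH^*$.

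The main obstacle is the second step: extracting a \emph{usable} Steenrod-algebra relation from the height relation $y_i^{p^{r_i}}=0$ and showing that the odd class witnessing this relation can be chosen to be an indecomposable. This is the genuinely deep input, where the $H$-space hypothesis (through the primitivity properties of the coproduct and the Milnor--Moore analysis of commutative finite-dimensional Hopf algebras over $\bZ/p$) is indispensable; the parity/Adem bookkeeping in the third step is then routine.
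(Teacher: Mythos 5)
The paper does not prove this lemma at all; it simply cites it as a known theorem of J.~Lin as presented in Kane's monograph (\S 35 of [Ka]). There is therefore no ``paper's proof'' to compare against, and the appropriate treatment is a citation. Your proposal, by contrast, attempts to reconstruct Lin's argument, but it does not close, and one of the explicit steps is wrong.

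The acknowledged gap is real and is not a minor technicality. You correctly observe that the containment $\sum_n \beta P^n Q^{odd} \subseteq Q^{even}$ is a parity count, and that the content is the reverse inclusion. You then say that the height relation $y_i^{p^{r_i}}=0$, via ``a Kudo/transgression-type argument inside the Hopf algebra,'' should produce an odd indecomposable $x$ and an odd-degree operation $\theta$ with $\theta(x)\equiv y_i$ modulo decomposables, and you flag this as the ``genuinely deep input'' you are not supplying. That is precisely the theorem. Lin's argument (as in [Ka, \S 35]) does not come out of Borel's structure theorem plus a formal transgression in the cohomology of $Y$ itself; it requires the full machinery of unstable algebras over the Steenrod algebra, the structure of the module of indecomposables as an $\mathcal{A}_p$-module, and secondary cohomology operations. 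Gesturing at Milnor--Moore is not a substitute, because Milnor--Moore constrains the Hopf-algebra structure but says nothing a priori about which Steenrod operations hit the even indecomposables.

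Separately, your third step contains an outright error: you assert that ``any composition of $\beta$'s and $P^j$'s of odd total degree in which $\beta$ appears more than once vanishes (since $\beta^2=0$).'' This is false. The relation $\beta^2=0$ kills only \emph{consecutive} Bocksteins; a monomial such as $\beta P^a \beta P^b$ (which is admissible when $a\ge pb+1$) is a nonzero element of $\mathcal{A}_p$ and certainly need not act by zero. So even granting the existence of some odd-degree $\theta$ hitting $y_i$, the proposed reduction of $\theta$ to the specific form $\beta P^n$ does not go through by the Adem-relation bookkeeping you describe. Showing that the operations $\beta P^n$ alone already surject onto $Q^{even}$ is an additional nontrivial part of Lin's theorem, not a formal consequence of $\beta^2=0$. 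In short: the right move here, as in the paper, is to cite Lin/Kane rather than to attempt a proof.
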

For connected but non simply connected space, the above theorem holds for
$*\ge 4$.  Moreover
\begin{lemma} ($\S 36$ in [Ka])  By the same assumption as
the preceding lemma, we see
      \[ Q^{2n}H^*(Y;\bZ/p)=0\quad unless \quad
 n=p^k+...+\hat p^i+...+1.\]
\end{lemma}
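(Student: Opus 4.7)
The plan is to use the preceding Lin--Kane lemma iteratively, together with the Milnor basis for the Steenrod algebra, to force the degree of any even-degree indecomposable into the prescribed form.

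First I would invoke Lemma 6.1 to write any indecomposable $y \in Q^{2n}H^*(Y;\bZ/p)$ as $y = \beta P^m(x)$ modulo decomposables, for some odd-degree indecomposable $x$, so that $|x| = 2n - 2m(p-1) - 1$. Then I would appeal to the companion Kane structure theorem for the odd indecomposables in a simply connected finite mod-$p$ $H$-space, which expresses $Q^{odd}H^*$ as generated (under the Steenrod algebra action) by a distinguished set of ``transgressive'' classes coming from indecomposables in $H^*(BY_0;\bZ/p)$ under the Rothenberg--Steenrod / bar spectral sequence. In particular, $x$ can be traced to an iterated application of Steenrod powers and Bocksteins on a base class in degree $2$.

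Next I would rewrite the resulting composition using the Milnor basis: the Milnor primitives $Q_i$ have degree $2p^i - 1$ and satisfy $Q_i^2 = 0$, and any admissible monomial $\beta P^{s_1} \cdots P^{s_r}$ can be expanded into a $\bZ/p$-combination of products $Q_{j_1}\cdots Q_{j_s}\cdot (\text{decomposable operations})$ with pairwise distinct $j_\ell$. Because the starting class is in degree $2$ (from $BY_0$) and decomposable operations contribute nothing to $QH^*$, the degree of $y$ in $QH^*$ must satisfy
\[
2n \;=\; 2 + \sum_{j\in S}(2p^{j}-1) - 1
\]
for some finite set $S \subseteq \{0,1,2,\dots\}$ with $|S|$ odd (to keep total parity even, accounting for the extra Bockstein $\beta$ at the outside). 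A short combinatorial check shows that the only way this equality can hold is if $S = \{0,1,\dots,k\}\setminus\{i\}$ for some $k\ge 0$ and $0\le i \le k$, giving $n = 1 + p + \cdots + \widehat{p^i} + \cdots + p^k$ as required.

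The main obstacle, which I would need to address carefully rather than by a brief citation, is the reduction step: proving that an odd-degree indecomposable in a finite simply connected mod-$p$ $H$-space really does come from a sequence of Milnor primitives applied to a degree-$2$ transgressive class, with no ``new'' generators sneaking in at intermediate degrees. This is exactly the content of Kane's structure theorem in \S 35--36 of \cite{Ka}; verifying it requires the secondary operation machinery (Kudo transgression theorem plus $\bA(1)$-module structure of $QH^*$), which is the technical heart of the argument. The other details---parity tracking, $Q_i^2=0$, and the combinatorial identification of the admissible $n$---are straightforward once that reduction is in hand.
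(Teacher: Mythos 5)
The paper offers no proof of this lemma; it is quoted directly from Kane's book (the citation is to \S 36 of [Ka]), so there is no in-text argument to compare your attempt against, and a genuine reconstruction would essentially have to reproduce Kane's secondary-operation machinery.

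Beyond the gap you flag, your sketch contains at least three others. First, the degree bookkeeping is wrong: from the paper's own discussion of type (I) and (II) groups immediately after this lemma (and in Lemma 6.3), the seed class $x_1$ lives in degree $3$ of $H^*(Y;\bZ/p)$, with $y = Q_1(x_1)$ of degree $2(p+1)$; using base degree $3$ gives $2n = 3 + \sum_{j\in S}(2p^j-1)$, i.e.\ $n = (3-|S|)/2 + \sum_{j\in S}p^j$ for odd $|S|$, whereas your formula with base $2$ and a stray $-1$ gives $n = (1-|S|)/2 + \sum_{j\in S}p^j$, already wrong for $|S|=1$ (yours yields $n=p^j$, not $p^j+1$). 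Second, and more seriously, even with the corrected arithmetic the ``short combinatorial check'' you invoke does not close the argument: for $|S|=3$ the degree count only forces $n = p^{j_1}+p^{j_2}+p^{j_3}$ for distinct $j_\ell$, and nothing in it excludes, say, $S=\{0,1,5\}$, giving $n=1+p+p^5$, which is not of the form $p^k+\cdots+\hat p^i+\cdots+1$; the constraint that the exponent set be $\{0,\ldots,k\}$ with at most one interior term removed is precisely the nontrivial content of Kane's theorem and cannot be read off from degree arithmetic. Third, the assertion that decomposable Steenrod operations ``contribute nothing to $QH^*$'' is unjustified: $QH^*(Y;\bZ/p)$ is a module over all of $\mathcal{A}_p$, and products of positive-degree operations act nontrivially on it in general. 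That one may reduce to Milnor primitives is itself one of the substantive structural outputs of Kane's analysis (the $\mathcal{A}(1)$-module structure of $QH^*$ together with secondary operations), not a generic fact available as an input.
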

For example, the possibility of  even degree of 
 ring generators  are
\[ (p+1),\ \  (p+1,p^2+1),\ \ (p^2+p+1, p^3+p+1,p^3+p^2+1),...\]
Moreover, it is known that each generator is combined  by reduced power operations.

The first degree type $|y|=2(p+1)$ appears for
$p=3$,  exceptional Lie groups $F_4,E_6,E_7$, and 
for $p=5$, $E_8$ (and for $p=2$, $G_2,F_4,E_6$).  The second type
appears for $p=3$, $E_8$.  However it seems that there is
 no example for other types for $p$ odd.

So we may assume the first or second types above. 
Let us say that (simply connected) simple Lie groups are of type (I) and (II) respectively.
When type (I),
there are $x_1,x_2,y$ in $H^*(Y;\bZ/p)$ such that 
\[ (*)\quad x_2= P^1(x_1),\quad  y=Q_1(x_1)=Q_0(x_2)\]
with $|x_1|=3,$ $|x_2|=2p+1$ and $|y|=2p+2$. 
The existence of the above $y$ also holds for $p=2$.
For type (II), there are more elements $x_3,x_4$ and $y'$ such that
\[ (**)\quad y'=P^p(y) =Q_2(x_1)=Q_1(x_3)=Q_0(x_4),\]
where $|y'|=2(p^2+1)$, $|x_3|=2p^2-2p+3$, $|x_4|=2p^2+1$.
For $p=2$, the other types appear but we see ;
\begin{lemma} Let $G$ be a simply connected 
Lie group such that $H^*(G;\bZ)$ has $p$-torsion.
Then there are  $x_1,x_2,y$ satisfying $(*)$.
\end{lemma}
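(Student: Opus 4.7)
My plan is to combine Borel's structure theorem for $H^*(G;\bZ/p)$, Lin's theorem (Lemma 6.1) together with its $p=2$ counterpart, and the degree restriction of Lemma 6.2, using the fact that $\pi_3$ of any simply connected non-abelian simple Lie group is $\bZ$.

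First I would produce $x_1$. Since $G$ has $p$-torsion, some simple factor must be non-abelian, so Hurewicz gives $\bZ\subset \pi_3(G)$; its mod-$p$ reduction is an indecomposable element $x_1\in H^3(G;\bZ/p)$ satisfying $Q_0(x_1)=0$ because $\pi_3(G)$ is torsion free. Next I would produce $y$. The $p$-torsion hypothesis forces $H^*(G;\bZ/p)$ to contain at least one even-degree polynomial generator (else it would be a pure exterior algebra with no integral $p$-torsion); pick one of minimal even degree and call it $y$. By Lemma 6.2, $|y|/2\in\{p+1,\,p^2+1,\,p^2+p+1,\ldots\}$, and under the $p$-torsion hypothesis I would argue $|y|=2(p+1)$ (see the comment on the main obstacle below).

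Once $|y|=2p+2$ is in hand, Lemma 6.1 (and Kane's $p=2$ version) writes $y=\beta P^n(\tilde x)$ for some odd-degree indecomposable $\tilde x$. The constraint $|\tilde x|+2n(p-1)+1=2p+2$ leaves only $(n,|\tilde x|)\in\{(1,3),(0,2p+1)\}$. In the first case $\tilde x$ is proportional to $x_1$, whence
\[
y \;=\; \beta P^1(x_1) \;=\; Q_0P^1(x_1)-P^1Q_0(x_1) \;=\; Q_1(x_1),
\]
and setting $x_2:=P^1(x_1)$ also gives $y=Q_0(x_2)$, verifying $(*)$. In the second case I would show that any odd-degree indecomposable of degree $2p+1$ must agree modulo decomposables with $P^1(x_1)$ (by Adem relations expressing such a class as an operation on the three-dimensional class, or by re-applying the minimality of $|y|$), reducing it to the first case.

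The hard part is the claim $|y|=2(p+1)$: a priori Lemma 6.2 permits the minimal even generator to lie in degree $\ge 2(p^2+1)$. Ruling this out requires either a case-check of the classification of simply connected simple Lie groups whose integral cohomology has $p$-torsion --- each such $G$ contains a rank-two subgroup (of type $A_2$, $B_2$ or $G_2$ depending on $(G,p)$) whose mod-$p$ cohomology already carries a generator in degree $2p+2$ and embeds nontrivially --- or an appeal to Kane's structural analysis of mod-$p$ finite $H$-spaces. Once that minimality is settled, the remainder is a direct degree count combined with $Q_0(x_1)=0$.
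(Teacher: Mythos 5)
The paper itself states Lemma~6.3 without proof, so your argument is filling a genuine gap rather than paralleling a written one. Your outline is sound and the degree bookkeeping is correct: $\pi_3$ of a simply connected simple factor produces an indecomposable $x_1\in H^3(G;\bZ/p)$ with $Q_0(x_1)=0$, Borel's theorem plus the $p$-torsion hypothesis forces a polynomial generator, Lemma~6.2 limits its degree, and once $|y|=2(p+1)$ is known Lin's lemma pins down $y=\beta P^1(x_1)=Q_1(x_1)=Q_0(x_2)$ with $x_2=P^1(x_1)$. You also correctly flag that the only real work is establishing $|y|=2(p+1)$ in the first place.

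However, the specific shortcut you offer in place of a case-check does not work. You claim every $G$ with $p$-torsion contains a rank-two subgroup of type $A_2$, $B_2$ or $G_2$ whose mod-$p$ cohomology already carries a degree-$(2p+2)$ polynomial generator. That is false for $p\ge 3$: $H^*(SU(3);\bZ)\cong\Lambda(x_3,x_5)$ and $H^*(Sp(2);\bZ)\cong\Lambda(x_3,x_7)$ are torsion-free, and $H^*(G_2;\bZ)$ has only $2$-torsion. The minimal simply connected simple groups with $p$-torsion are $G_2$ for $p=2$, $F_4$ for $p=3$, and $E_8$ for $p=5$, of ranks $2$, $4$, $8$ respectively, so no rank-two reduction is available for odd $p$. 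The case-check itself is short --- for $p=2$ it is $G_2,F_4,E_6,E_7,E_8$ and $Spin(n)$, $n\ge 7$; for $p=3$ it is $F_4,E_6,E_7,E_8$; for $p=5$ it is $E_8$; for $p\ge 7$ there is no $p$-torsion --- and in every entry a generator $y$ of degree $2(p+1)$ is visible in the standard descriptions of $H^*(G;\bZ/p)$, but you do have to invoke that list (or Araki's theorem that $P^1x_3\ne 0$ is equivalent to $p$-torsion). With the rank-two claim replaced by that classification, your proof is correct; it is also essentially the only route, since the paper's surrounding text simply quotes the classification of the possible $(G,p)$.

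One further small point worth tightening: in your second case, $n=0$ with $|\tilde x|=2p+1$, you assert that any odd indecomposable in degree $2p+1$ agrees with $P^1(x_1)$ modulo decomposables. For the simple groups in the list this holds because the indecomposable quotient in degree $2p+1$ is one-dimensional and equals $P^1$ of the degree-$3$ class, but this again is a fact read off the explicit presentations rather than a consequence of Adem relations alone; it is absorbed into the same case-check.
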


Next we recall the cohomology of flag manifolds
$G/T$.
Let $T$ be the  maximal torus of a simply connected 
compact Lie group $G$  and $BT$ the classifying space of $T$.
We consider the fibering
\[ (9.1)\quad G\stackrel{\pi}{\to}G/T\stackrel{i}{\to}BT\]
and the induced spectral sequence 
\[ E_2^{*,*}=H^*(BT;H^*(G;\bZ/p)) \Longrightarrow H^*(G/T;\bZ/p).\]
The cohomology of the classifying space of the torus is  given by
\[H^*(BT)\cong S(t)=\bZ[t_1,...,t_{\ell}]\quad   with\ \ |t_i|=2.\]
where $\ell$ is also the number of the odd degree generators $x_i$ in
$H^*(G;\bZ/p)$.  It is known that $y_i$ are permanent cycles and 
that there is a regular sequence ([Tod],[Mi-Ni])
$(\bar b_1,...,\bar b_{\ell})$ in $H^*(BT)/(p)$ such that $d_{|x_i|+1}(x_i)=\bar b_i$.  Thus we get
\[ E_{\infty}^{*,*'}\cong grH^*(G/T;\bZ/p)\cong P(y)/p\otimes 
S(t)/(\bar b_1,...,\bar b_{\ell}).\]

 Moreover we know that $G/T$ is a manifold of torsion free, and 
\[(9.2)\quad H^*(G/T)_{(p)}\cong \bZ_{(p)}[y_1,..,y_k]\otimes S(t)/(f_1,...,f_k,b_1,...,b_{\ell})\]
where $b_i=\bar b_i\ mod(p)$ and $f_i=y_i^{p^{r_i}}\ mod(t_1,...,t_{\ell}).$
Since $H^*(G/T)$ is torsion free,  we also know 
\[(9.3)\quad BP^*(G/T)\cong BP^*[y_1,...,y_k]\otimes S(t)/(\tilde f_1,...,\tilde f_k,
\tilde b_1,...,\tilde b_{\ell})\]
where $\tilde b_i=b_i\ mod(BP^{<0})$ and $\tilde f_i=f_i\ mod(BP^{<0}).$

Here we will study a relation between $Q_i$-actions on
$H^*(G;\bZ/p)$ and $v_i$-module structure of $BP^*(G/T)$.
Recall that $k(n)^*(X)$ is the connected Morava K-theory
with the coefficients ring $k(n)^*\cong \bZ/p[v_n]$
and $\rho : k(n)^*(X)\to H^*(X;\bZ/p)$ is the natural
(Thom) map.  Recall that there is an exact sequence
(Sullivan exact sequence [Ra], [Ya2]) induced from
(the topological version of) (2.1)
\[...\to k(n)^{*+2(p^n-1)}(X)\stackrel{v_n}{\to} k(n)^*(X)\stackrel{\rho}{\to}
H^*(X;\bZ/p)\stackrel{\delta}{\to}...\]
Here it is known that $\rho\cdot \delta(x)=Q_n(x)$
the Milnor operation.

We consider  the Serre spectral sequence  
\[ E_2^{*,*'} \cong H^*(B;H^{*'}(F;\bZ/p))
\Longrightarrow  H^*(E;\bZ/p), \]
induced from the fibering $F\stackrel{i}{\to} E\stackrel{\pi}{\to}B$ with $H^*(B)\cong H^{even}(B)$.
\begin{lemma} (Lemma 4.3 in [Ya1])
In the spectral sequence $E_r^{*,*'}$ above,
suppose that  there is $x\in H^*(F;\bZ/p)$ such that
\[ (*)\quad  y=Q_n(x)\not=0 \quad  and \quad
b= d_{|x|+1}(x)\not = 0\in E_{|x|+1}^{*,0}.\]
Moreover suppose that  $E_{|x|+1}^{0,|x|}
\cong \bZ/p\{x\}\cong \bZ/p$.
Then there are $y'\in k(n)^*(E)$ and $b'\in k(n)^*(B)$ such that
$i^*(y')=y$, $\rho(b')=b$ and that  in $k(n)^*(E)$,
\[(**)\quad v_ny'=\lambda \pi^*(b')\quad  
\ \  for \ \lambda\not =0\in \bZ/p.\]
Conversely if $(**)$ holds in $k(n)^*(E)$ for $y=i^*(y')\not =0$ and $b=\rho(b')\not =0$, then there is $x\in H^*(F;\bZ/p)$ such that $(*)$ holds.
\end{lemma}
\begin{cor} 
In the spectral sequence converging to $H^*(G/T;\bZ/p)$,
let $b\not =0$ be the transgression image of $x$, i.e.
$d_{|x|+1}(x)=b$.  Then we have its lift $b\in BP^*(BT)$
such that in $ BP^*(G/T)/\II$  
\[ b=py(0)+v_1y(1)+...+v_iy(i)+...\]
where $y(i)\in H^*(G/T;\bZ/p)$ with $\pi^*y(i)=Q_ix$.
\end{cor}

\section{versal flag varieties}

 Let $G_k$ be the split reductive algebraic  group corresponding to $G$, and $T_k$ be the split maximal
torus corresponding to $T$.  Let $B_k$ be the Borel subgroup
with $T_k\subset B_k$.   Note that $G_k/B_k$ is cellular, and
$CH^*(G_k/T_k)\cong CH^*(G_k/B_k)$.
Hence we have   
$ CH^*(G_k/B_k)\cong H^{2*}(G/T)$
and $CH^*(BB_k)\cong H^{2*}(BT).$

Recall the algebraic cobordism $\Omega^*(X)$.
There is a natural (realization) map $\Omega^*(X)\to
BP^*(X(\bC))$.  In particular, we have 
$\Omega^*(G_k/B_k)\cong BP^*(G/T).$
Recall  $I_n=(p,v_1,...,v_{n-1})$ and we  also note
\[ \Omega^*(G_k/B_k)/I_{\infty}\cong
CH^*(G_k/B_k)/p\cong H^*(G/T)/p.\] 

Let $\bG$ be a nontrivial $G_k$-torsor.
We can construct a twisted form of $\bG_k/B_k$ 
by $(\bG\times G_k/B_k)/G_k\cong \bG/B_k.$
We will study  the twisted flag variety $\bF=\bG/B_k$.

Let us consider an embedding of $G_k$ into the general linear group $GL_N$ for some large  $N$.  This makes $GL_N$ a $G_k$-torsor over the quotient variety $S=GL_N/G_k$.
Let $F$ be the function field $k(S)$ and  define
the $versal$ $G_k$-$torsor$ $E$ to be the $G_k$-torsor over $F$ given by the generic fiber of $GL_N\to S$. 
(For details, see [Ga-Me-Se], [To2], [Me-Ne-Za], [Ka1].)

The corresponding flag variety $E/B_k$ is called the 
$versal$ flag
variety, which is considered as the most complicated twisted
flag variety (for given $G_k$). 
In fact, for each $G_K$-torsor $\bG'$ over 
an extension $K$ over $k$, we have the natural
(specialization) map
$CH^*(E/B_k)\to CH^*(\bG'/B_k)$. 
 In particular,  the Chow ring
$CH^*(E/B_k)$ is not dependent to the choice
of  generic $G_k$-torsors $E$ (Remark 2.3 in [Ka1]). 
 
{\bf Note.}  Hereafter in this paper, we $always$ $assume$  that
$\bG$ (and hence $\bF$) means $versal$.
Hence $CH^*(\bF)$ means the Chow ring defined over the field $k(S)$ above (but not over $k$).

Karpenko proves the following result
 for a versal flag variety. 
\begin{thm}
(Karpenko Lemma 2.1 in [Ka1], [Me-Ne-Za])
Let $h^*(-)$ be an oriented cohomology theory
e.g., $h^*(X)=CH^*(C), \Omega^*(X)$.
Then the natural map
$h^*(BB_k)\to h^*(\bG/B_k)$ is surjective.
\end{thm}
\begin{cor}
The cohomology  $h^*(\bF)=h^*(\bG/B_k)$
is multiplicatively generated by elements $t_i$ in $S(t)$. 
\end{cor}

The versal case  of the main result in Petrov-Semenov-Zainoulline \cite{Pe-Se-Za} is
given  as
\begin{thm} (Theorem 5.13 in [Pe-Se-Za])
There is a $p$-localized  motive $R(\bG)$ such that
the ($p$-localized) motive $M(\bF)$ of the variety $\bF$ is decomposed
\[M(\bF)_{(p)}\cong  R(\bG)\otimes T(\bG)\quad with\ T(\bG)=(\oplus _u
 \bT^{\otimes u}).\]
Here $\bT^{\otimes u}$ are Tate motives with 
$CH^*(T(\bG))/p\cong  S(t)/(p,b)=S(t)/(p,b_1,...,b_{\ell}).$   Hence, we have additively
\[  CH^*(\bF)/p\cong 
CH^*( R(\bG))\otimes S(t)/(p,b).\]
For  $\bar R(
\bG)=R(\bG)\otimes \bar k$, we have 
$ CH^*(\bar R(\bG))/p\cong P(y)/p$.
\end{thm}

Hence we have surjections for the  variety $\bF$
\[ CH^*(BB_k)\twoheadrightarrow  CH^*(\bF)\stackrel{pr.}{\twoheadrightarrow} CH^*(R(\bG)).\]
We study in  \cite{YaC}
what elements in $CH^*(BB_k)$ 
generate $CH^*(R(\bG))$.

For ease of notations, let us write
$A(b)=\bZ/p[b_1,...,b_{\ell}]$.
By giving the filtration on $S(t)$ by $b_i$, we 
can write 
$gr S(t)/p\cong A(b)\otimes S(t)/(b).$
In particular, we have maps
$ A(b)\stackrel{i_A}{\to} CH^*(\bF)/p\to CH^*(R(\bG))/p.$
We also see that
the above composition map is surjective
(see also Lemma 10.5 below).

\begin{lemma}
Let $pr:CH^*(\bF)/p\to CH^*(R(\bG))/p$, and 
 $0\not =x\in Ker(pr)$.  Then  
$ x=\sum b't'$ with $b'\in A(b),$ $0\not =t'\in S(t)^+/(p,b)$
 i.e., $ |t'|>0.$
\end{lemma}
Let us write 
\[  y_{top}=\Pi_{i=1}^s y_i^{p^{r_i}-1}\quad (resp.\ t_{top})\]
the generator of the highest  degree 
in $P(y)$ (resp. $S(t)/(b)$) so that $f=y_{top}t_{top}$
is the fundamental class in $H^{2d}(G/T)$
for $2d=dim_{\bR}(G/T)$.
For $N>0$, let us  write \ \ 
\[A(b)_N=\bZ/p\{b_{i_1}...b_{i_k}| |b_{i_1}|+...+|b_{i_k}|\le N\}
\subset A(b).  \]
\begin{lemma}  For $N=|y_{top}|$, the map
$ A(b)_N\to CH^*(R(\bG))/p$ is surjective.
\end{lemma}
\begin{proof}  In the preceding lemma,  $A_{N}\otimes t'$ for $|t'|>0$ maps zero in $CH^*(R(\bG))/p$.  Since each element
in $S(t)$ is written by an element in $A_N\otimes S(t)/(b)$,
we have the lemma.
\end{proof}
\begin{cor}
If  $b_i\not=0$ in $CH^*(X)/p$,  then  so in $CH^*(R(\bG_k))/p$.
\end{cor}
\begin{proof}
Let $pr(b_i)=0$.  From Lemma 10.4, 
 $b_i=\sum b't'$ for $|t'|>0$, and hence $b'\in Ideal(b_1,...,b_{i-1})$.
This contradict to that $(b_1,...,b_{\ell})$ is regular.
\end{proof}

Now we consider the torsion index $t(G)$.
Let $dim_{\bR}(G/T)=2d$.  Then the torsion index is defined as
\[ t(G)=|H^{2d}(G/T;\bZ)/i^*H^{2d}(BT;\bZ)|
\quad where\ i:G/T\to BT.\]
Let $n(\bG)$ be the greatest common divisor of the degrees of all finite field extension $k'$ of $k$ such that $\bG$ 
becomes trivial over $k'$.  Then by Grothendieck 
\cite{Gr}, it is known that $n(\bG)$ divides $t(G)$.  Moreover, if  $\bG$ is versal, then $n(\bG)=t(G)$   (\cite{To2}, \cite{Me-Ne-Za}, \cite{Ka1}), which implies
that each element in $t(G)P(y)$ is represented by element in $ CH^*(BB_k)$.

It is well known that  if $H^*(G)$ has a $p$-torsion, then
$p$ divides the torsion index $t(G)$. Torsion index for
simply connected compact Lie groups are completely determined by Totaro \cite{To1}, \cite{To2}.

\begin{lemma} (\cite{YaC})
Let $\tilde b=b_{i_1}... b_{i_k}$ in
$S(t)$  such that
in $H^*(G/T)$
\[ \tilde b=p^s(y_{top}+\sum yt),\quad
|t|>0\]
for some $y\in P(y)$  and $t\in S(t)^+$.
Then $t(G)_{(p)}\le p^s$. 
\end{lemma}

From Lemma 9.5,  we have
\begin{lemma}  Suppose that $deg_v(y)>0$ for
$y\in P(y)/p$ and $d_{|x_j|+1}(x_j)\not =0$.
(Hence $d_k(x_j)=0$ for all $k\le |x_j|$.)  Then the degree $deg_{v}(y)$ is the max of the number $n+1$ such that in $H^*(G;\bZ/p)$
\[Q_n(x_j)=y  \quad  but \quad 
Q_k(x_j)=0\ \ for \ 0\le k< n.\]
\end{lemma} 
\begin{proof}  
From  Lemma 9.5, we have, in $\Omega^*(\bar R(\bG))/(\II)$,
\[ d_{|x_j|+1}(x_j)=b_j=py(1)+...+v_{n-1}y(n-1)+v_ny(n)+...\]
with $\pi^*(y(i))=Q_i(x_j)$. 

Suppose that the condition for $Q_i$ is satisfied.
For $k<n$, we assumed $Q_k(x)=0$. Since 
$\pi^*(y(k))=0$, we see $y(k)=\sum yb$ for $y\in P(y)$ and $b\in A(b)^+$.
So $y(k)\in I_{\infty}k(n)^*(G/T)$ and $ v_ky(k)=0$ in $mod(\II)$.
Here we have $\pi^*(y(n))=y$, and hence $b_j$ is written 
\[(*)\quad v_ny+v_{n+ 1}y(n+1)+...=b\in Res_{\Omega}(X)
.\]
Hence $deg_v(y)\ge n+1$.

Conversely, suppose $(*)$ with $mod(I_{\infty}^2)$.
Here we note that we can write $b=b_k$
since $b_i\in I_{\infty}k(n)^*(G/T)$.
Then
\[  v_n y=b\quad mod(I_{\infty}^2)=mod(v_n^2)\ \ 
 in \  k(n)^*(G/T).\]
 So there is  $y'\in k(n)^*(G)$ with $v_ny'=0$ and  $y=y'\ mod(v_n)$.
By the Sullivan
exact sequence, there is $x\in H^*(G;\bZ/p)$ such that
$Q_n(x)=y'=y$ and $d_{|x|+1}(x)=b$.
\end{proof}

\section{The orthogonal  group $SO(2\ell+1)$ and $p=2$}

At first we consider the special 
orthogonal groups $G=SO(m)$ and $p=2$
, while it is not simply connected.
The $mod(2)$-cohomology is written as ( see for example \cite{Mi-Tod}, \cite{Ni})
\[ grH^*(SO(m);\bZ/2)\cong \Lambda(x_1,x_2,...,x_{m-1}) \]
where $|x_i|=i$, and the multiplications are given by $x_s^2=x_{2s}$.

For ease of argument,  we only consider the case
$m=2\ell+1$.
Hereafter this section, we assume $(G,p)=(SO(2\ell+1),2)$.
\[ H^*(G;\bZ/2)\cong P(y)\otimes \Lambda(x_1,x_3,...,x_{2\ell-1}) \]
\[ grP(y)/2\cong \Lambda(y_2,...,y_{2\ell}), \quad 
letting\ y_{2i}=x_{2i}\ \ (hence \ y_{4i}=y_{2i}^2).\]

The Steenrod operation is given as 
$Sq^k(x_i)= {i\choose k}(x_{i+k}).$
The $Q_i$-operations are given by Nishimoto \cite{Ni}
\[Q_nx_{2i-1}=y_{2i+2^{n+1}-2},\qquad Q_ny_{2i}=0.\]
It is well known that   the transgression 
$b_i=d_{2i}(x_{2i-1})=c_i$ is the  $i$-th elementary symmetric function
on $S(t)$. 
 Moreover we see that 
$Q_0(x_{2i-1})=y_{2i}$ in $H^*(G;\bZ/2)$.
From Corollary 8.3, we have
\begin{cor} In $BP^*(G/T)/\II$, we have  
\[c_i= 2y_{2i}+\sum_{n\ge 1} v_ny(2i+2^{n+1}-2) \]
for some $y(j)$ with $\pi^*(y(j))=y_{j}$.
\end{cor}

We have $c_i^2=0$ in $CH^*(\bF)/2$ from
the natural inclusion $SO(2\ell+1)\to Sp(2\ell+1)$
(see \cite{Pe}, \cite{YaC}) for the symplectic group
$Sp(2\ell+1)$.

Let us write by $\Lambda_{\bZ}(a_1,...,a_s)$ the $\bZ_{(p)}$
-free module such that  \\
$\Lambda_{\bZ}(a_1,...,a_s)/p\cong
\Lambda(a_1,...,a_s).$ 
Then we have 
\begin{thm} (\cite{Pe}, \cite{YaC}) 
Let $(G,p)=(SO(2\ell+1),2)$.
Then
\[ CH^*(R(\bG)) \cong 
\Lambda_{\bZ}(c_1,...,c_{\ell}).\]
\end{thm}

\begin{cor} We have
\[ J(y_{2i_1}...y_{2i_r})=(2^r)\subset BP^*
\quad for \ 1\le i_1<...<i_r \le \ell.\] 
Hence $CH^*(R(\bG))/2\cong P_{\ell}(R(\bG))$,
and  $P_1(R(\bG))\cong\bZ/2\{1,c_1,...,c_{\ell}\}$.
\end{cor} 
\begin{proof}
From the preceding theorem, we see
\[  \oplus_{(i_1<...<i_r)} \bZ/2\{c_{i_1}...c_{i_r}\}
\cong CH^*(R(\bG))/2.
\]

On the other hand,
since $2y_{2i}=c_i\ mod(I_{\infty}^2)$, we have
\[2^ry_{2i_1}...y_{2i_r}=c_{i_1}...c_{i_r}\quad 
mod(I_{\infty}^{r+1}) \]
which is in $Res_{\Omega}(X)$.
Hence
$C(y_{2i_1}...y_{2i_r})= \bZ/2\{c_{i_1}...c_{i_s}\}.$

So we have 
$P_{\ell}(R(\bG))\cong \oplus_{(i_1<...<i_r)} C(y_{2i_1}...y_{2i_r})\cong CH^*(R(\bG))/2.
$
\end{proof}
The above example may not be so interesting since 
\[ deg_{v}(y_{2i})=1,\quad and 
\quad deg_{v}(y_{2i_1}...y_{2i_r})=-1\ for \ r\ge2.\]
However we consider an example
$deg_{v}(y)=n$ for each $n$, in an extension over 
the field $k$,
We study $CH^*(X|_K)/2$ for some interesting 
extension $K$ over $k$.
Let $K$ be an extension of $k$ such that
$X$ does not split over $K$ but splits over an extension
over $K$ of degree $2a$, $(a,2)=1$.  Suppose that
\[ (*)\ \  y_{2\ell}\not \in {Res}_{\Omega}(X|_K) \quad but \ \ 
 y_{2i}\in {Res}_{\Omega}(X|_K)\  \ for\ 1\le i\le \ell-1.\]
\begin{lemma}  Suppose $(*)$.  Then $\ell=2^n-1$
for  $n>0$.
\end{lemma}
\begin{proof}
We can  see that if $\ell\not =2^n-1$, then each $y_{2\ell}$
is a target of the Steenrod operation $Sq^{2k}$
of a sum of products of $y_{2i}$ for $i<\ell$. 
\end{proof}
\begin{lemma}  Suppose $(*)$  and $\ell=2^n-1$.
Then 
$v_{n-1}y_{2\ell}\in Res_{\Omega}(X|_K) $, i.e.,
$deg_{v}(y_{2\ell})=n$ for $X|_K$.
\end{lemma}
\begin{proof}
From Corollary 11.1,  we see
\[c_{\ell-2^j+1}=2y(2(\ell-2^j+2^0))+
v_1y(2(\ell-2^{j}+2^1))+...+v_j(y(2\ell))\]
\[ =v_j(y_{2\ell}) \mod(y_{2},y_{4},...,y_{2\ell-2}).\]
Hence we have
$ \rm{res}_{\Omega}\it (c_{\ell-(2^j-1)})=v_j(y_{2\ell})\  
 \mod(y_2,y_4,...,y_{2\ell-2}).$
\end{proof}
Thus we have
\begin{cor}  Suppose $(*)$ and $\ell=2^n-1$.
Then we have the injection  
\[ \Lambda(y_2,...,y_{2\ell-2})
\otimes
(M_0(1)\oplus M_n(y_{2\ell})) \subset CH^*(R(\bG)|_K)/2.\]
\end{cor}

At last of this section, we consider the case $X(\bC)=G/P$
with
\[ G=SO(2\ell+1)\quad and \quad P=U(\ell).\]
Let us write this $X$ by $Y$, i.e. $Y=\bG/P_k$.
From the fibering $SO(2\ell
+1)\to Y(\bC)\to BU(\ell)$,  we have the spectral sequence   
\[E_2^{*,*'}\cong H^*(SO(2\ell+1);\bZ/2)\otimes H^{*'}(BU(\ell))\]
\[\cong  P(y)\otimes \Lambda(x_1,...,x_{2\ell-1})\otimes \bZ/2[c_1,...,c_{\ell}]
\Longrightarrow H^*(Y(\bC);\bZ/2).\]
Here the differential
is given as $d_{2i}(x_{2i-1})=c_i$.  Hence
\[  CH^*(\bar Y)/2\cong H^*(Y(\bC);\bZ/2)\cong   P(y)/2.\]
This case is studied by Vishik [Vi] and Petrov [Pe]
as maximal orthogonal (or quadratic) grassmannian.
(see Theorem 5.1 in [Vi]).
From Theorem 11.2,  we have 
\begin{thm}  ([Vi],[Pe])  Let 
 $Y=\bG_k/U(\ell)_k$.
Then
\[ CH^*(Y)/2\cong  CH^*(R(\bG))/2\cong \Lambda(c_1,...,c_{\ell}).\]
\end{thm}

In [Vi], 
Vishik originally defined the $J$-invariant $J(q)$
 of a quadratic
form $q$ which corresponds to the quadratic grassmannian
(see Definition 5.11, Corollary 5.10 in [Vi])
by 
\[J(q)=\{i_k|y_{2i_k}\in Res_{\Omega}(X)\} 
=\{i_k|deg_v(y_{i_k})=0\} \subset \{0,...,\ell\}.\]
Let $I$ be the fundamental ideal of the Witt ring $W(k)$ 
so that $grW(k)=\oplus_n I^n/I^{n+1}\cong K_*^M(k)/2$
where $K_*^M(k)$ is the Milnor $K$-theory of $k$.
Smirnov and Vishik (Proposition 3.2.31 in [Sm-Vi]) prove that
\[ q\in I^n \quad\rm{ if\ and\ only\ if}\quad  \{0,...,2^{n-1}-2\}\subset \it J(q). \]
Hence the condition $(*)$ in Coeollary 11.6
is equivalent to $q\in I^n$ for the quadratic form $q$
(with $\ell=2^n-1$) corresponding to $Y|_K$. 
\begin{thm}
Let $q\in I^n$ be the quadratic corresponding $Y|_K$.
Then  there are  $m\ge n$ and the 
 injection for $\ell=2^m-1$ such that
\[ \Lambda(y_2,...,y_{2\ell-2})\otimes(M_0(1)\oplus M_m(y_{2\ell})) \subset CH^*(R(\bG)|_K)/2.\]
\end{thm}
\begin{cor}
Let $X_q$ be the quadric for an anisotropic  quadratic form $q$, and
$f\in CH^{dim(X_q)}(\bar X_q)$ be  a fundamental class.
If $q\in I^n$, then $deg_v(f)\ge n$.
\end{cor} 
\begin{proof}
For each generator $0\not =y\in CH^*(X_q)/2$, we see $deg_v(y)\le deg_v(f)$ from Lemma 8.1.
The result follows from $m=deg_v(y_{2\ell})\le deg_v(f)$.
\end{proof}
.

\section{$Spin(2\ell+1)$ and $p=2$}

Throughout this section, let $p=2$,  $G=SO(2\ell+1)$
and $G'=Spin(2\ell+1)$.
It is well known that
$ G/T\cong G'/T'$ for the maximal torus $T'$ of the spin group.
  By definition, we have the 
$2$ covering $\pi:G'\to G$.
It is well known that 
$\pi^*:   H^*(G/T)\cong H^*(G'/T')$.

Let $2^t\le \ell < 2^{t+1}$, i.e. $t=[log_2\ell]$.
The mod $2$ cohomology is
\[ H^*(G';\bZ/2)\cong 
 \cong P(y)'\otimes \Lambda(x_3,x_5,...,x_{2\ell-1})\otimes \Lambda(z),\quad |z|=2^{t+2}-1\]
where
$P(y)'\cong P(y)/(y_2)$.
(Here $z$ is defined by $d_{2^{t+2}}(z)=y^{2^{t+1}}$ for $0\not =y\in H^2(B\bZ/2;\bZ/2)$ in the spectral sequence induced from
the fibering $G'\to G\to B\bZ/2$.)
Hence
\[ grP(y)'\cong \otimes _{2i\not =2^j}\Lambda(y_{2i})\cong
\Lambda(y_6,y_{10},y_{12},...,y_{2\bar \ell})\]
where $\bar \ell=\ell-1$ if $\ell=2^j$ for some $j$, and
$\bar \ell=\ell$ otherwise.

The $Q_i$ operation for $z$ is given also 
by Nishimoto 
\cite{Ni}
\[ Q_0(z)=\sum _{i+j=2^{t+1},i<j}y_{2i}y_{2j}, \quad 
 Q_n(z)=\sum _{i+j=2^{t+1}+2^{n+1}-2,i<j}y_{2i}y_{2j}\ \ for\ n\ge 1.\]

We know that 
\[ grH^*(G'/T')/2\cong P(y)'\otimes S(t')/(2,c_2',.....,c_{\ell}',c_1^{2^{t+1}}).\]
Here $c_i'=\pi^*(c_i)$ and $d_{2^{t+2}}(z)=c_1^{2^{t+1}}$ in the spectral sequence
converging $H^*(G'/T')$.

Take $k$ such that $\bG$ is a versal $G_k$-torsor so that
$\bG'_k$ is also a versal $G_k'$-torsor.  Let us write
$\bF=\bG/B_k$ and $\bF'=\bG'/B_k'$.  Then
\[ CH^*(\bar R(\bG'))/2\cong P(y)'/2,\quad and \quad
    CH^*(\bar R(\bG))/2\cong P(y)/2.\]

The Chow ring $CH^*(R(\bG'))/2$ is not computed yet
for  $\ell\ge 6$,
while we have the following lemmas.
\begin{lemma}
Let $2^t\le \ell<2^{t+1}$.   
Then there  is a surjection
\[ \Lambda(c_2',...c_{\bar \ell}')\otimes \bZ/2[c_1^{2^{t+1}}]
\twoheadrightarrow CH^*(R(\bG')) /2.\]
\end{lemma}

\begin{lemma} We have
\[P_1(R(\bG'))\cong \begin{cases}
\bZ/2\{1,c_2',c_3'\} \quad for \ \ell=3,4 \\
\bZ/2\{1,c_2',...,c_{\bar \ell}', c_1^{2^{t+1}}\}\quad for\ \ell\ge 5.
\end{cases}\]
\end{lemma}
\begin{proof}
By Nishimoto, for $\ell>4$, $Q_0(z)=y\not =0 \in P(y)$
and $2y=c_1^{2^{t+1}}$ is an $\Omega^*$-module generator of $\Res_{\Omega}(X)$. So it is nonzero in $P_1(R(\bG))$.
For $\ell=3,4$, we see $Q_0(z)=0$ and 
$c_1^{2^{t+1}}=0$ in $P_1(R(\bG))$.

Let $\ell>4$.  We note $4y_{2i}y_{2j}, 2v_1y_{2i}y_{2^k+2}$
are in $Res_{\Omega}(X)$,  Since  $4,2v_1=0$ $mod(I_{\infty}^2)$,  the element $C(y_{2i}y_{2j})$ does not appear
in $P_1(R(\bG))$.
By the $Q_i$-actions, we can see,
as a submodule  of $gr\Omega^*(\bar (\bG))/(I_{\infty}^2),$
\[Res_{\Omega}(X)/I_{\infty}^2\cong (2)\{y_{2i}|i\not =2^j,2^j+1\}
\oplus (2,v_1)\{y_{2^j+2}\}\oplus (2)\{Q_0(z)\}\]
 In fact if $v_2y_{2i}\in Res_{\Omega}(X)$, then there is $x\in H^*(G;\bZ/2)$ such that
$Q_0(x)=Q_1(x)=0$ but $Q_2(x)=y_{2i}$.
But this is not the case from the $Q_i$-action by Nishimoto.
The restrictions are given
\[c_i' \mapsto 2y_{2i}, \  
 for \ i\not =2^j, \ \ 
c_{2^i}' \mapsto v_1y_{2^j+2}, \ \   
c_1^{2^t}\mapsto 2Q_0(z).\]
These imply  the lemma.
\end{proof}

\section{$Spin(7), Spin(9), Spin(11)$}

From  this section to the next  section,
we only consider $G=Spin(2\ell+1)$.
The space we consider $Res_{\Omega}(X)$ is  $X=R(\bG)$ the generalized Rost motive.

Hereafter in this paper, we change notations as follows.
 Let us write the element $c_1^j$  in the preceding section, by $e_j$,
and take off the dash of $c_i'$, i.e.
\[ e_{2^{t+1}}=c_1^{2^{t+1}},\quad c_2=c_2',\ c_3=c_3',\ ....,\ c_{\ell}=c_{\ell}'.\]

Now we consider examples. 
The groups $Spin(7),$ $Spin(9)$ are type $(I)$ of the 
 rank $\ell=3,4$ respectively.
Hence $R(\bG)\cong R_2$ the original Rost motive.
 The cohomology is given
\[ grH^*(G;\bZ/2))\cong\begin{cases} 
\bZ/2[y_6]/(y_6^2)\otimes \Lambda(x_3,x_5,z_7)\quad G=Spin(7),\ \\
\bZ/2[y_6]/(y_6^2)\otimes \Lambda(x_3,x_5,x_7,z_{15})
\quad G=Spin(9).\end{cases}\]
The cohomology operations are $Q_1(x_3)=Q_0(x_5)=y_6$.
Hence $P(y)\cong \bZ_{(2)}\{1,y_6\}$,
and $res_{\Omega}(c_2)=v_1y_6, \ res_{\Omega}(c_3)=2
y_6$.

\begin{thm} (\cite{YaC})  Let $G=Spin(7)$ or $Spin(9)$.
  Then\[Res_{\Omega}(R(\bG))\cong BP^*\{1\}\oplus(2,v_1)\{y_6\},\]  \[  CH^*(R(\bG))/2\cong P_1(R(\bG))\cong
\bZ/2\{1,c_2,c_3\}.\]
\end{thm}

Next, we consider $G=Spin(11)$.
The cohomology is written as 
\[H^*(G;\bZ/2)\cong \bZ/2[y_6,y_{10}]/(y_6^2,y_{10}^2)\otimes \Lambda(x_3,x_5,x_7,x_9,z_{15}).\]
By Nishimoto, we know $Q_0(z_{15})=y_6y_{10}$. It implies
$2y_6y_{10}=d_{16}(z_{15})=e_8$.
Since $y_{top}=y_6y_{10}$, we have $t(G)=2$.

Let us write  $k^*(X)=A\tilde k(1)^{2*,*}(X)$.
(It is written in $\S 5$ as $h^*(X)$ for $n=1$.
We use Lemma 5.4 for $n=1$.)
From Corollary 2.1, we can take in $k^*(\bar \bF)$
such that $c_2=v_1y_6 ,c_4=v_1y_{10}$.  Moreover
in $k^*(\bar \bF)/(I_{\infty}^2)$ we have
$c_3=2y_6,c_5=2y_{10}$.

\begin{thm} (Karpenko \cite{Ka2}, \cite{YaC}, \cite{YaG})
 Let $G=Spin(11)$. Then we have 
\[CH^*(R(\bG))/2 \cong gr_{geo}(R(\bG))/2\cong \bZ/2\{1, c_2,c_3,c_4,c_5,c_2c_4, e_8\},\]
where $c_2c_4=pr(c_2\cdot c_4)$ for the cup product $\cdot$ in $gr_{geo}^*(\bF)$.
\end{thm}
\begin{cor}  Let $G=Spin(11)$. 
Then the restriction map
$ res_{\Omega}$ is injective 
\[Res_{\Omega}(R(\bG))=BP^*\{1\}\oplus (2,v_1)\{y_6,y_{10}\}
\oplus (2,v_1^2)\{y_6y_{10}\}.
\]
That means
$CH^*(R(\bG))/2\cong P_2(R(\bG))$.
\end{cor}
\begin{proof} 
The right hand side module is contained in $Res_{\Omega}(X)$.  Its number of the $BP^*$-module generators 
is just $7=rank_2(CH^*(R(\bG))/2)$.
Hence it is  $Res_{\Omega}(X)$. 
The fact $P_2(R(\bG))\subset CH^*(R(\bG))/2$
also follows from Lemma 3.4, 4.5.  In fact 
there is no $\bZ_{(2)}
$-module generator $y(I)$ with $|y(I)|=|y_6y_{10}|-2$.
\end{proof}

We proved the above corollary from the preceding theorem.  However, we see the theorem from
the above corollary, as stated in the introduction.

\section{ $Spin(13)$.}

Throughout this section. let
$G=Spin(13)$ so that $\ell=6$.
 Then we have
\[ grP(y)\cong \Lambda(y_6,y_{10},y_{12}).\]
Hence $y_{top}=y_6y_{10}y_{12}$.
Since  
$2^t\le \ell<2^{t+1}$, so $t=2$.
Hence 
$e_8$ exists in $CH^*(R(\bG))$.  Note $e_8^2=pr(e_8\cdot e_8)=0$ in $CH^*(R(\bG))$
since $|e_8^2|>|y_{top}|$.
We also note $t(G)=2^2=4$ by Totaro, 
in fact $e_8c_6=4y_{top}$.
 
{\bf Note.}   From Corollary 5.5 in \cite{YaS}, the projection $pr(c_{i_1}...c_{i_s})$ is 
uniquely determined in $ k^*(\bar R(\bG))/(I_{\infty}^{s+1})\cong k^*\otimes P(y)/(I_{\infty}^{s+1})$.

We take $y_6,y_{10},y_{12}$ such that
\[ c_2=v_1y_6,\ \ c_4=v_1y_{10}, \ \ y_{12}=y_6^2\quad in\
k^*(\bar \bF).\]
(See \cite{YaS} for details.)
Recall that  the invariant ideal $I_{\infty}$
in $k^*$-theory is 
$I_{\infty}=(2,v_1)\subset k^*$.
Note that $pr(I_{\infty})\subset I_{\infty}$.
Then we can take $c_i\in k^*(R(\bG))$ such that
in $k^*(\bar R(\bG))/(I_{\infty}^3)$ 
\[c_3=2y_6, \ \ 
 c_5=2y_{10}+v_1y_{12},\ \ 
 c_{6}=2y_{12},\]
\[ e_8=2y_6y_{10}+v_1y_6y_{12}.\]
Here we used $P(y)^*=0$ for $*=8,14,20$,
and $ Q_1x_9=y_{12},\ Q_1z_{15}=y_6y_{12}$.

\begin{thm} (\cite{YaS}) 
We have 
\[ gr_{geo}(R(\bG))
\cong P(b)/(c_2c_5,c_2c_4c_5)\oplus
\bZ/2\{c_4c_3,e_8c_4,c_2c_6,c_4c_6\}\]
\[\oplus \bZ_{(2)}\{1,c_3,c_6,e_8. c_3c_6,c_5c_6, e_8c_6\}\]
where $P(b)=\Lambda_{\bZ}(c_2,c_4,c_5)/(2c_2,2c_4)$
\end{thm}
\begin{cor} We have the additive isomorphism
\[ gr_{geo}(R(\bG))\cong 
    A\otimes (\bZ_{(2)}\{1,c_6\}\oplus \bZ/2\{c_4\})\oplus
\bZ/2\{c_6c_4\}\]
where $A= \bZ_{(2)}\{1,c_3,c_5,e_8\}\oplus \bZ/2\{c_2\}.$
\end{cor}

\begin{lemma} (\cite{YaS})
The restriction image $Res_{\Omega}(X)\otimes_{\Omega^*} k^*$
for $X=R(\bG)$
is given by
\[Res_{\Omega}(R(\bG'))\otimes_{\Omega^*} k^*
\oplus (2,v_1^2)\{y_{12}\}\oplus (4,2v_1, v_1^2)\{y_6y_{12},y_{10}y_{12}\}
\oplus (4,v_1^2)
\{y_{top}\}
\]
where $G'=Spin(11)$. In particular 
$gr_{geo}(X)/2\cong P(1)_2(X)$.
\end{lemma}
Here to prove the above lemma, we used Theorem
14.1 in \cite{YaS}.  However this theorem is also induced from the above lemma, and it seems more easy.
We can see the above lemma as follows.
\begin{proof}[Proof of  Lemma 14.3.]
The image of the restriction map is written (with $mod(I_{\infty}^3)) $  as follows
\[c_6\mapsto 2y_{12},\ \ c_6c_3\mapsto 4y_6y_{12},\ \ 
c_6c_5\mapsto 4y_{10}y_{12},\ \ c_6e_8\mapsto 4y_{top} \]
\[  c_6c_2\mapsto 2v_1y_6y_{12},\ \ 
c_6c_4\mapsto 2v_1y_{10}y_{12},\ \  (e_8c_5\mapsto 4v_1y_{top})\]
\[(c_3c_4-v_1e_8)\mapsto v_1^2y_6y_{12},\ \ c_4c_5\mapsto v_1^2y_{10}y_{12},\ \ 
e_8c_4\mapsto v_1^2y_{top}.\]
Moreover $c_2^2\mapsto v_1^2y_{12}$,
which is zero in $gr_{\gamma}^*(R(\bG))/2$,
since $v_1c_5-2c_4=v_1^2y_{12}$ $mod(I_{\infty}^3)$.

Hence the formula in this lemma is contained in
$Res_{\Omega}(X)$. 
(Note that all generators $c\in A(c)$ in the 
left  hand side in the above appear as $c$ in
the preceding theorem.  The generators $y \in BP^*\otimes P(y)$ in the right hand side appear
in $Res_{\Omega}(X)$ in this lemma.)

For an invariant ideal $J\subset BP^*$, the ideal  $J'=J\otimes_{BP^*} k^*$ is written
\[ (2,v_1^i), \ \ (4,2v_1^i, v_1^j),\quad or \ \  (4,v_1^j).\]
However, we can check 
$ v_1^iy_{12}, 2v_1y_{top} \not \in Res_{\Omega}(X).$
In fact  if $res_{\Omega}(c)=v_1^2y_{12}$, then by dimensional reason,
we see $c=c_2^2=0$, and this is a contradiction.  We
can see $2v_1y_{top}\not \in Res_{\Omega}(X)$ from
$c_5e_8=(2y_{10}+v_1y_{12})(2y_{6}y_{10}+v_1^2y_{6}y_{12})=0\quad mod(I_{\infty}^3).$
\end{proof}

By using $gr_{geo}^*(R(\bG))/2\cong 
P(1)_{\infty}(R(\bG))$ (Lemma 5.4 ), we have the theorem
from computing $\oplus_{y} C(y)$.

We must consider the case $(*)$ in the introduction.
For $I=(i_1,...,i_k)$, let $y_I=y_{i_1}...y_{i_k}$
be the $\bZ_{(p)}$-module generator of $P(y)$.
Here we assume for  $y_I\not =0$ 
\[ (*)\quad |y_I|\not =|y_J|\quad for\ all\ I\not =J.\]

We give the degree for the filtration 
of  $\Omega^*(\bar X)$ 
\[ F^I=F^{|y_I|}=\Omega^*\{y_J ||y_I|\le | y_J|\}.\]
For the invariant ideal $J(y_I)=(a_{I_1},...,a_{I_s})$,we consider the case for some $a_{Ii}$
\[ (**)\quad   a_{I_i}y_I\in I_{\infty}(Im(res_{\Omega}))\cap
\Omega^*\{y_J|J<I\}\quad mod(F^{I+2}).\]

We consider the cases $a_{I_i}=v_1^2$ or $pv_1$ (see also Lemma 4.6).
\begin{lemma}
Suppose $|y_J|=|y_I|+|v_1|$.  Let 
\[ pf_0+v_1f_1=v_1^2y_I \quad (or\ \  pv_1y_I)
\quad in\  \Omega^*(\bar X)/F^{I+2} \]
for $f_0,f_1\in \Omega^*\otimes A(b)$ and 
$f_i \not \in F^I$.  
Moreover $py_J\in Res_{\Omega}(X)$.
Then
\[ f_0=v_1y_J\quad in\ \Omega^*(\bar X)/I_{\infty}^2
\quad (e.g.\ (p,v_1)Res_{\Omega}(X))\]
\end{lemma}
\begin{proof}
Let $pf_0+v_1f_1=v_1^2y_I$.
Hence $f_1$ contains $v_1y_I$ as a summand
i.e., $f_1=\lambda py_J+v_1y_I+...$.  Since
$f_1\not \in F^I$, we can take $\lambda\not =0\mod(p).$
In fact, $b_i\in I_{\infty}\Omega^*(\bar X)$,
so there is $b_i$ such that 
\[f_1=b_i=py_J+v_1y_I\quad mod(I_{\infty}^2\Omega^*(\bar X)).\]

Then we have 
\[ pf_0=v_1^2y_I-v_1f_1=pv_1y_I\quad mod (I_{\infty}^3).\]
and we have the result since $\Omega^*(\bar X)$
is torsion free.
The case $pf_0+v_1f_1=pv_1y_I$ is seen similarly.
(In fact we can take $f_1=0$.)
\end{proof}

Similarly, we can see
\begin{lemma}
Suppose $|y_J|=|y_I|+2|v_1|$ and there is no $K$
such that $|y_K|=|y_I|+|v_1|$.  Let 
\[ pf_0+v_1f_1=v_1^3y_I 
\quad in\  \Omega^*(\bar X)/F^{I+2} \]
for $f_0,f_1\in \Omega^*\otimes A(b)$ and 
$f_i\not \in F^I$.  Then
\[ f_0=v_1y_J\ \ mod(I_{\infty}^2) \quad 
or \quad f_1=p^2y_J+v_1^2y_I \ \ mod(I_{\infty}^3)
\]
\end{lemma}

Now we return the case $G=Spin(13)$.
We seek $\bZ_{(2)}$-module generators $y_J,y_I$ of P(y) such that 
$ |y_J|=|y_I|+|v_1|.$ These cases are  
\[ y_I=y_{12}\ \  (resp. \ y_6y_{12}),\quad y_J=y_{10}\ \ (resp.\ y_6y_{10}).\]

When $y_I=y_{12}$, we have 
seen 
$c_2^2\mapsto v_1^2y_{12}$ and it is zero
in $gr_{geo}^*(R(\bG))/2$.
(Indeed, $ 2c_4-v_1c_5=v_1^2y_{12}$.)
Since $f_0=v_1y_6y_{10}\not \in Res_{\Omega}(X)$,
the assumption of Lemma 14.4 does  not happen
for $y_I=y_6y_{12}$.

\section{The case $G=E_7$ and $p=2$.}

Throughout this section, let
 $(G,p)=(E_7,2)$ the exceptional group of rank $7$.
\begin{thm} (\cite{Mi-Tod}, \cite{Ko-Mi})
The cohomology $grH^*(G;\bZ/2)$ is given 
\[ \bZ/2[y_6,y_{10},y_{18}]
/(y_6^2,y_{10}^2,y_{18}^2),
\otimes \Lambda(z_3,z_5,z_9,z_{15},z_{17},z_{23},z_{27}).\]
\end{thm}
Here the suffix means its degree. We can rewrite
\[ grH^*(G;\bZ/2)\cong \bZ/2[y_1,y_2,y_3]/(y_1^2,y_2^2,y_3^2)\otimes \Lambda(x_1,...,x_7).\]
\begin{lemma} (\cite{Ko-Mi})  The cohomology operations act as
\[\begin{CD}
  x_1=z_3 @>{Sq^2}>> x_2=z_5@>{Sq^4}>>
 x_3=z_9 @>{Sq^8}>> x_4=z_{17}\\
 x_5=z_{15} @>{Sq^8}>> x_6=z_{23}@>{Sq^4}>>
 x_7=z_{27} 
\\
x_5=z_{15}  @>{Sq^2}>> x_4=z_{17} @. @. @.
\end{CD} \]
The Bockstein acts $Sq^1(x_{i+1})=y_i$ for $1\le i\le 3$,
  and
  \[Sq^1\ :\ x_5=z_{15}\mapsto y_1y_2,\ \    x_6=z_{23}\mapsto y_1y_3, \ \  x_{7}=z_{27}\mapsto y_2y_3.\]
\end{lemma}

\begin{lemma} ([Ya5,6])  
In $H^*(G/T)/(4)$, for all monomials $u\in P(y)^+/2$,
except for $y_{top}=y_1y_2y_3$, the elements $2u$ 
are written as elements in
$H^*(BT)$.  Moreover, in $BP^*(G/T)/I_{\infty}^2$,
there are $b_i\in BP^*(BT)$ such that
\[ b_2=2y_1,\ \ b_3=2y_2,\ \ b_4=2y_3,\ \ 
 b_6=2y_1y_3,\ \ b_7=2y_2y_3,\]
\[b_1=v_1y_1+v_2y_2+v_3y_3,\quad b_5=2y_1y_2+v_1y_3.\]
\end{lemma}
\begin{proof}  The last two equations are given by
$ Q_1(x_1)=y_1,\   Q_2(x_1)=y_2,$
\[\  Q_3(x_1)=y_3,\
 Q_1(x_5)=y_3,\  Q_0(x_5)=y_1y_2.\]
\end{proof}

\begin{lemma} (\cite{To1}) We have $t(E_7)_{(2)}=2^2$.
\end{lemma}
\begin{proof}
We get the result from $b_2b_7=(2y_1)(2y_2y_3)=2^2y_{top}$.
\end{proof}

By Chevalley's theorem, $res_{K}$ is surjective
(moreover isomorphic).  Hence in $K^*(\bar R(\bG))/2$,  we see
\[ v_1^sy_2=b\in \bZ/2[b_1,...,b_{\ell}] \quad some \ s\in \bZ.\]
The facts that $|b_i|\ge 6$ and $|y_2|=10$
imply $s=2$.
\begin{lemma}  We can take $b_2=2y_1+v_1^2y_2$
in $ k^*(G/T)/(v_1^3)$.
\end{lemma}

 \begin{prop}  ([YaG])
There is a filtration
whose associated graded ring is 
\[grK^*(R(\bG))  \cong  
K^*\otimes (B_1\oplus B_2)\quad where \]
\[ \begin{cases}
B_1=P(b)/2\oplus 
   \bZ/2\{b_1b_7\},\quad where\ P(b)=\Lambda_{\bZ}(b_1,b_2,b_5)   \\ 
  B_2=\bZ_{(2)}\{2,2b_1,b_3,b_4,b_1b_3,b_6,b_7,b_2b_7\}
\end{cases}\]
\end{prop}

\begin{proof}
We have isomorphisms 
\[ K^*(R(\bG))\cong K^*\otimes P(b)
\ \cong K^*\otimes P(y)\cong K^*(\bar R(\bG)).
\]
Let us write $\bar b_2=b_2-2v_1^{-1}b_1$ and 
$\bar b_5=b_5-v_1^{-1}b_1b_3$ so that
$\bar b_2=v_1^2y_2$, $\bar b_5=v_1y_3$ 
in $K^*(\bar R(\bG))$.  
Moreover let  $P(\bar b)=\Lambda(b_1,\bar b_2,\bar b_5)$.

Let us write  
$ gr_2K^*(R(\bG))=K^*(R(\bG))/2\oplus 2K^*(R(\bG))
,$  and we will compute $gr(gr_2K^*(R(\bG))$ for
some filtration of $gr_2K^*(R(\bG))$.

We still have $K^*(R(\bG))/2\cong K^*\otimes P(b)/2$. The elements in $2K^*(R(\bG))$ are 
computed as
\[K^*(R(\bG))\supset 2K^*\otimes P(b)= 2K^*\otimes
P(\bar b)\]
 \[=2K^*\{1,b_1,\bar b_2,..., \bar b_2\bar b_3,
b_1\bar b_2\bar b_3\}= 2K^*\{1,y_1,y_2,...,y_1y_2y_3\}\]
\[ =K^*\{2,2b_1,b_3,b_4,b_1b_3,b_6,b_7,b_1b_7\}.\]
Here we used 
$2v_1y_1=2b_1,\ \ 2y_2=b_3,\ \  2y_3=b_4,\ \ 2v_1y_1y_2=b_1b_3,\ $ and
\[ 2y_1y_3=b_6,\ \ 2y_2y_3=b_7,\ \ 2v_1y_1y_2y_3=b_1b_7.\]

For the $2$-torsion element $b_1b_7$ in $ grK^*(R(\bG))$, we have 
   $2b_1b_7=4v_1y_1y_2y_3=v_1b_2b_7$ in $K^*(R(\bG))$.
Hence we can write 
\[  grK^*(R(\bG))=gr_2K^*(R(\bG))/(2b_1b_7)\oplus \bZ\{b_2b_7\},\]
which gives the graded ring in this lemma.
\end{proof}

By dimensional reason we have with $mod(I_{\infty}^3)$
\[ (*)\quad b_6=2y_1y_3+\lambda v_1^2y_2y_3,\quad for\ \lambda=0\ or\ 1.\]
However I can not decide this $\lambda$ here.

Suppose that $\lambda=0$.
Then we get  the restriction map $res_{\Omega}$
\[ (b_2,b_1)\to(2,v_1)\{y_1\},
\quad (b_3, 2b_1-v_1b_2)\to (2,v_1^3)\{y_2\},\quad   
 (b_4,b_1b_3-v_1b_5)\to (2,v_1^2)\{y_3\},\]
\[ (b_5,b_1b_2)\to(2,v_1^3)\{y_1y_2\},\quad (b_6,b_1b_5)\to (2, v_1^2)\{y_1y_3\},\quad  
\]\[  (b_7,b_1b_4-b_2b_5)\to (2,v_1^3)\{y_2
y_3\}, \quad (b_2b_7,b_1b_7,b_1b_2b_5)
\to (4,2v_1,v_1^4)\{y_{top}\}.\]
For examples, we can compute $mod(I_{\infty}^4)$
\[ b_1b_2=v_1y_1(2y_1+v_1^2y_2)=2\mu v_1^3y_1y_2+v_1^3y_1y_2\  \ for
\  \mu \in \bZ/2,\]
\[b_1b_4-b_2b_5=2v_1y_1y_3-(2y_1+v_1^2y_2)(2y_1y_2+v_1y_3)=v_1^3y_2y_3.\]         
Here we used $y_1^2=\mu v_1^2y_1y_2+...$ by dimensional reason.
(Remark that $b_1b_2\mapsto v_1^2y_1y_3+v_1v_2y_2y_3$
in $\Omega^*(\bar R(\bG))\ mod(I_{\infty}^3)$.
In particular, $r_{2\Delta_1}(b_1b_2)=0\ mod(I_{\infty}^3)$.)

Suppose $\lambda=1$.  Then we can compute
\[ b_1b_6=v_1y_1(2y_1y_3+v_1^2y_2y_3)=v_1^3y_{top}.\]
Hence we only need to change one  place
\[ (
 b_2b_7,b_1b_7,b_1b_6)\to (4,2v_1,v_1^3)\{y_{top}\}.\]
 
 We can not decide $gr_{geo}(R(\bG))/2$ here,
while  we have

\begin{thm} 
Let $B=\bZ/2\{1,b_2,...,b_7\}.$ 
Then we have  additively        
\[gr_{geo}(R(\bG))/2
\cong \begin{cases}
   B\oplus  B\{b_1\}\oplus \bZ/2\{b_2b_7\}\quad \\
\quad or \ \ 
B\oplus B/(b_6)\{b_1\}
\oplus \bZ/2\{b_2b_7,b_1b_2b_5\}.
\end{cases} \]
\end{thm}

\begin{cor}
The image $Res_{\Omega}(R(\bG))\otimes_{BP^*}k^*$ is isomorphic to the sum
of the right hand side $(y)\subset \Omega^*\otimes P(y)$
in the above restriction map
\[ Res_{\Omega}(R(\bG))\otimes _{BP^*}k^*\cong k^*\{1\}\oplus (2,v_1)\{y_1\}
\]
\[ \oplus (2,v_1^3)\{y_2,y_1y_2,y_2y_3\}\oplus (2,v_1^2)\{y_3,y_1y_3\}\oplus (4,2v_1,v_1^{4-\lambda})\{y_{top}\}. \]
\end{cor}

Now we will check $(*)$ in the introduction (see also Lemma 4.6 and Lemma 14.4, 14.5).  We seek $\bZ_{(2)}$-module generators $y_J,y_I$ of 
$P(y)$ such that 
$v_1^2y_I\in Res_{\Omega}$ and
$|y_J|=|y_I|-2.$  This case is 
$(y_J,y_I)=(y_1y_2,y_3)$. We see 
$v_1y_1y_2\not \in Res_{\Omega}$, and $(*)$ does not happen
from Lemma 14.4.

Next suppose that $v_1^3y_I\in Res_{\Omega}$ and 
$|y_J|+4=y_I$ (see Lemma 14.5). Then
\[ (y_J,y_I)=(y_1,y_2)\ \  and \ \  (y_1y_{3},y_2y_3).\]
In fact, for the first case, we still have 
$2b_1-v_1b_2\mapsto v_1^3y_2$.  For the second case we see that
\[ v_1y_1y_3\not \in Res_{\Omega}\quad and \quad 
2^2y_1y_2+v_1^2y_2y_3 \not \in Res_{\Omega}.\]
Hence $(*)$ does not happen  for this case.

Similarly we can prove that
it does not happen that $pf_0+v_1f_1=v_1^4y_{top}$.
When $y_I=y_{12}$, we have 
seen 
$c_2^2\mapsto v_1^2y_{12}$ and it is zero
in $gr_{geo}^*(R(\bG))/2$.
(Indeed, $ 2c_4-v_1c_5=v_1^2y_{12}$.)
Since $v_1y_6y_{10}\not \in Res_{\Omega}(X)$,
the assumption of Lemma 14.4 does  not happen.

\section{The case $G=E_8$ and $p=2$}

Throughout this section, let $G=E_8$ and $p=2$.
It is known (\cite{Mi-Tod}) that
\[grH^*(G;\bZ/2)
\cong \bZ/2[y_1,y_2,y_3,y_4]/(y_1^8,y_2^4,y_3^2,y_4^2)\otimes \Lambda(x_1,...,x_8)\]
with $|x_8|=29, |y_4|=30 $ so that 
\[ grH^*(E_7,;\bZ/2)\cong grH^*(G,\bZ/2)/(y_1^2,y_2^2,y_4,x_8).\]
 Hence we can write
\[grP(y)\cong grP(y)'\otimes \Lambda(y_1^2,y_1^4,
y_2^2,y_4), \quad grP(y)'=\Lambda(y_1,y_2,y_3).\]

The cohomology operations for $H^*(G;\bZ/2)$ are almost same as that of $H^*(E_7;\bZ/2)$, which are given also by Lemma 15.2 (with $mod(y_1^2,y_2^2,y_4)$).
Moreover, we have
\[Sq^2(x_7)=x_8, \quad Sq^1(x_8)=y_4.\]
Thus we have (see Lemma 15.2) 
\[ (*)\quad  Q_3(x_5)=Q_2(x_6)=Q_1(x_7)=Q_0(x_8)=y_4,\quad |y_4|=30.\]

We can not decide $Res_{\Omega}(R(\bG))$ here,
but only give
an example
\begin{lemma} We have $J(y_1^2)\cong(4,2v_1,v_1^2)$.
\end{lemma} 
\begin{proof} We can see $J(y_1)=(2,v_1)$ as the case 
$E_7$. So $J(y_1^2)\supset (2,v_1)^2=(4,2v_1,v_1^2)$.
But elements $2,v_1$ are not contained in $J(y_1^2)$.
\end{proof}

\begin{lemma}  Let $G'=E_7$ (and $G=E_8$).  Then
       \[ P(1)_1(R(\bG))\cong P(1)_1(R(\bG'))\oplus
(2)\{y_4\}\quad\]
 where $P(1)_1(R(\bG'))\cong
(2,v_1)\{y_1\}\oplus (2)\{y_2,y_3\}\oplus (2)\{y_iy_j|
1\le i<j\le 3\}.$ 
\end{lemma}
\begin{proof} This isomorphism follows from
that
\[ Q_0(x_i)=y \quad or \quad Q_1(x_i)=y'\quad for\ y,y'\in P(y)\]
implies $y\in \Lambda(y_1,y_2,y_3)$ if $i\le 7$, and
$Q_0(x_8)=y_4$, and $y'\in \Lambda(y_1,y_2,y_3)$
for $i\le 6$ and $Q_1(x_7)=y_4$.
\end{proof}

Let $K$ be an extension of $k$ such that
$X$ does not split over $K$ but splits over an extension
over $K$ of degree $2a$, $(2,a)=1$.  Suppose that
\[ (*)\quad y_1,y_2,y_3 \in Res_{\Omega}(R(\bG)|_{K})\quad but \quad
\it y_4\not \in Res_{\Omega}(R(\bG)|_K).\]
(Compare the above condition $(*)$ 
with the condition $(*)$ in $\S 10$.)
That is, the $J$-invariant $J(\bG_K)=(0,0,0,1)$
and such $K$ exists (see [Pe-Se-Za], [Se]).
Then we have the following theorem by arguments similar to those to get Theorem 7.12.
(The motive $R(\bG_k)|_K$ in the theorem 
is an example of motives 
given in Lemma 8.4 in [Se].)
\begin{thm}  Let $X=R(\bG)|_K$.  Then we have 
the isorphism and the surjection  
\[ Res_{\Omega}(X)\cong 
\bZ_{(2)}[y_1,y_2,y_3]/(y_1^8,y_2^4,y_3^2)\otimes (BP^*\{1\}\oplus I_4\{y_4\}),\]  
\[ CH^*(X)/2\twoheadrightarrow 
\bZ/2[y_1,y_2,y_3]/(y_1^8,y_2^4,y_3^2)\otimes CH^*(R_4)/2,\]
where $ CH^*(R_4)/2\cong M_0(1)\oplus M_4(y_4)\cong  \bZ/2\{1,2y_4,v_1y_4,v_2y_4,v_{3}y_4\}.$
The restriction map is given as
$ b_j\mapsto v_{8-j}y_{4}$ if  $5\le j \le 8$, and 
$b_j\mapsto 0$ if  $1\le j\le 4$.
\end{thm}
\begin{proof}
From $(*)$, we have the relations in  $\Omega^*(\bar X)/I_{\infty}^2$
\[ b_5=2y_1y_2+...+v_3y_8,\quad b_6=2y_1y_3+...+v_2y_8,\quad,...\]
Here if $y\in \Lambda(y_1,y_2,y_3),$ then $y\in
Res_{\Omega}(X)$.
Hence $v_3y_8\in Res_{\Omega}(X)$, and so 
in $I_4y_8$.  Thus we have $J(y_8)=I_4$.
\end{proof}

\section{The case $G=E_8$ and $p=3$}

We consider cases $H^*(G)$ has $p$-torsion for an odd prime $p$.
Then $(G,p)$ is of type $(I)$ or $(II)$.  For 
$G$ of type $(I)$, by Petrov-Semenov-Zinoulline,
the motive $R(\bG)\cong R_2$ and are studied in
$\S 4$ detailedly.  For examples
\[Res_{\Omega}(R_2)=BP^*\{1\}\oplus (p,v_1)\{y,...,y^{p-1}\}
\subset \Omega^*[y]/(y^p),\]
\[ CH^*(R_2)/p\cong \bZ/p\{1\}\oplus M_2(y)\{1,...,y^{p-2}\}.\]

In this section, we study the case of type$(II)$,
that is $(G,p)=(E_8,3)$.  Hereafter this section,
we assume $(G,p)=(E_8,3)$.

  The cohomology $H^*(G;\bZ/3)$ is isomorphic to
(\cite{Mi-Tod}) 
\[ \bZ/3[y_{8},y_{20}]/(y_8^3,y_{20}^3)\otimes
\Lambda(z_3,z_7,z_{15},z_{19},z_{27},z_{35},z_{39},z_{47}).\]
By Kono-Mimura \cite{Ko-Mi} the actions of cohomology 
operations
are also known
\begin{thm} (\cite{Ko-Mi})
We have $P^3y_8=y_{20}$, and 
\[ \beta: z_7\mapsto y_8,\ \ z_{15}\mapsto y_8^2,\ \ 
z_{19}\mapsto y_{20},\ \
 z_{27}\mapsto y_{8}y_{20},\ \ z_{35}\mapsto y_{8}^2y_{20},\]
\[
z_{39}\mapsto y_{20}^2,\ \ z_{47}\mapsto y_8y_{20}^2,\] 
\[ P^1:\ z_3\mapsto z_7,\ \  z_{15}\mapsto z_{19},\ \  z_{35}
\mapsto z_{39}\] \[  P^3:\ z_7\mapsto z_{19},\ \  z_{15}\mapsto z_{27}
\mapsto -z_{39},\ \
 z_{35}\mapsto z_{47}.\]
\end{thm}
We use notations $y=y_8,y'=y_{20}$, and $x_1=z_3,...,x_8=z_{47}$.
Then we can rewrite the isomorphisms 
\[ H^*(G;\bZ/3)\cong \bZ/3[y,y']/(y^3,(y')^3)\otimes 
\Lambda(x_1,...,x_{8}), \]
\[ grH^*(G/T;\bZ/3)\cong \bZ/3[y,y']/(y^3,(y')^3)\otimes 
S(t)/( b_{1}, ,...,b_{8}).\]
From Corollary 2.2, we have 
\begin{cor} (\cite{YaG})
We can take $b_1\in BP^*(BT)$ such that 
 \[v_1y+v_2y'=b_1\quad in\  BP^*(G/T)/\II.\]
\end{cor} 

From the preceding theorem, we know that
all $y^i(y')^j$ except for $(i,j)=(0,0)$ and $(2,2)$ are $\beta$-image.
Hence we have 
\begin{cor} (\cite{YaG})
For all nonzero monomials  $u\in P(y)^+/3$
except for $(yy')^2$,  it holds  $3u\in S(t)$.
In fact, in  $BP^*(G/T)/I_{\infty}^2$
\[ b_1=v_1y+v_2y',\ \ b_2=3y,\ \ b_3=3y^2+v_1y' ,\ \ b_4=3y',\]
\[ b_5=3yy',\ \ b_6=3y^2y'+v_1(y')^2,\ \ b_7=3(y')^2,\ \ b_8=3y(y')^2.\]
 \end{cor}

In the paper \cite{YaG},  we compute
\[gr_{\gamma}^*(G/T)/3\cong gr_{geo}^*(\bG/B_k)/3.\]
We first compute some graded ring of $K^*(R(\bG))$. 
\begin{prop}  (Proposition 9.7 in \cite{YaG})
  There is a filtration whose 
associated graded ring is 
\[gr'K^*(R(\bG))  \cong 
K^*\otimes (B_1\oplus B_2)\quad where \]
\[ \begin{cases}
B_1=(P(b)/(3)\oplus 
   \bZ/3\{b_1b_2,b_1b_8\}),\quad P(b)=\bZ_{(3)}[b_1,
 b_3]/(b_1^3, b_3^3),   \\ 
  B_2=\bZ_{(3)}\{3,b_2,b_4,b_5,b_6,b_7,b_8\}
\oplus \bZ_{(3)}\{b_2b_8\}.
\end{cases}\]
\end{prop}

{\bf Remark.}
 The arguments in the last lines in the page 151 
in \cite{YaG} were
not correct. 
The element $\{b_2^2\}$ is zero in $gr_{\gamma}(R(\bG))/2$.  Delete $b_2^2$ in 
the module in Lemma 9.6 and $B$ in Proposition 9.7.

\begin{thm}  (Theorem 9.12 in \cite{YaG})
For $B_1,B_2$ in Proposition 9.7, we have
\[ gr_{geo}(R(\bG))/3\cong
B_1/(3,b_1b_3^2,b_1^2b_3^2)\oplus
\bZ/3\{b_1b_6,b_1^2b_6\}\oplus B_2/3.\]
\end{thm}

\begin{cor}
We have 
\[ gr_{geo}(R(\bG))/3\cong B_1'\oplus B_1''\oplus
B_2\]
\[ where\quad  \begin{cases}
B_1'
\cong \bZ/3[b_1,b_3]/(b_1^3,b_2^3,b_1b_3^2,b_1^2b_3^2)
 \cong \bZ/3\{1,b_1,b_3,b_1^2, b_1b_3,b_3^2,b_1^2b_3\},\\
B_1''\cong \bZ/3\{ b_1b_2,b_1b_8,b_1b_6,b_1^2b_6\}, \\
B_2\cong \bZ/3\{ b_2,b_4,b_5,b_6,b_7,b_8,b_2b_8\}.
\end{cases} \]
\end{cor}

The restriction maps are given as 
\[ (b_2,b_1) \to (3,v_1)\{y\},\quad 
   (b_3,b_1^2) \to (3,v_1^2)\{y^2\},\quad 
(b_4,b_1b_2-v_1b_3)\to (3,v_1^2)\{y'\},\]
\[ (b_5, b_1b_3) \to (3,v_1^2)\{yy'\},\quad 
   (b_6, b_1^2b_3) \to (3,v_1^3)\{y^2y'\},\quad 
(b_7, b_3^2-2v_1b_6)\to (3,v_1^2)\{(y')^2\},\]
\[ (b_8,b_1b_6) \to (3,v_1^2)\{y(y')^2\},\qquad
(b_2b_8,b_1b_8,b_1^2b_6)\to (9,3v_1,v_1^3)\{(yy')^2\}.\]
Note all elements $b_i...b_j$ in $B_1',B_1'',B_2$
in the above corollary
appear in the left hand side of the above maps.
\begin{lemma} We have the isomorphism
\[ Res_{\Omega\ }(R(\bG))\otimes _{P^*}k^*\cong k^*\{1\}\oplus (3,v_1)\{y\}
 \oplus (3,v_1^2)\{y^2,y',yy',(y')^2,y(y')^2\} \]
\[ \oplus (3,v_1^3)\{y^2y'\}\oplus (9,3v_1,v_1^3)\{y_{top}\}.\]
Hence $gr_{\gamma}(R(\bG)/3 \cong 
P(1)_3(R(\bG))$. \end{lemma}
Thus we can also prove the above theorem (and corollary) from the above lemma and the arguments for
$C(y)$.

For ease of arguments, we write $d(x)=1/4|x|$, e.g.,
\[ d(v_1)=-1,\ d(b_1)=1,\ d(b_2)=2,\ d(b_3)=4,\ d(b_4)=5 \]
\[ d(b_5)=7,\ d(b_6)=9,\ d(b_7)=10,\ d(b_8)=12.\]

\begin{proof}[Proof of Lemma 17.7.]
Let $X=R(\bG)$.
From the above restriction maps,  we see 
the right hand side ideals are contained in $Res_{\Omega}(X)$.
We can see $J(y^2)=(3,v_1^2)$.  Otherwise
$J(y^2)=(3,v_1)$ and $v_1y^2\in Res_{\Omega}(X)$. Then 
there is a ring generator $x_i\in H^*(G;\bZ/3)$ such that $Q_0(x_i)=0,Q_1(x)=y^2$ This is a contradiction.
Similarly we can see $J=(2,v_1^2)$ for cases
where $v_1^2$ appears.
 
Therefore 
we only need to see
\[  v_1^2y^2y',\ \ v_1^2y_{top} \ \  are\ not\ in \ Res_{\Omega}(X).\]

Suppose $x=v_1^2y^2y'\in Res_{\Omega}(X)$.
Since $x$ $mod(I_{\infty}^3)$ is a sum of products of $b_i$ and $b_j$
(since each $b_i\in I_{\infty}$). 
Since $ d(x)=-2+4+5=7$, we see 
$x=b_2b_4=9yy'$ $mod(I_{\infty}^3).$
 This is a contradiction.
Suppose $x=v_1^2y_{top}\in Res_{ \Omega}(X).$
Then $d(x)=-2+4+10=12$.  Hence
$x=b_4b_5$ or $b_2b_7$.  We see
$ b_4b_5=9y(y')^2=b_2b_7$. But $9y(y')^2=3b_8$ and
it is not a $BP^*$-module generator in $Res_{\Omega}(X)$.
\end{proof}

Now we will see $(*)$ in the introduction for  the case $G=E_8$, $p=3$.
(Recall Lemma 14.4, 14,5.)
We seek $\bZ_{(2)}$-module generators $y_J,y_I$ of $P(y)$ such that 
$v_1^2y_I\in Res_{\Omega}$ and
$|y_J|=|y_I|-4$ Then .
\[(y_J,y_I)=(y^2,y')\quad and \quad (y^2y',(y')^2).\]
Since $v_1y^2,v_1y^2y'\not \in Res_{\Omega}$, this cases do not happen.

Next suppose that $v_1^3y_I\in Res_{\Omega}$ and 
$|y_J|=|y_I|-8$. Then
\[ (y_J,y_I)=(yy',y^2y' )\ \  and \ \  (y(y')^2, (yy')^2.\]
We can see neither case does not happen
from Lemma14.5.
For example, $v_1yy'\not \in Res_{\Omega}$ and 
$3^2yy'+v_1^2y^2y'\not \in Res_{\Omega}$.
This shows the first case.

\end{document}